\newtheorem{theorem}{Theorem}[section]
\newtheorem{corollary}[theorem]{Corollary} 
\newtheorem{lemma}[theorem]{Lemma}
\newtheorem{proposition}[theorem]{Proposition} 
\theoremstyle{definition}
\newtheorem{definition}[theorem]{Definition}
\theoremstyle{example}
\theoremstyle{remark}
\newtheorem{remark}[theorem]{Remark}
\numberwithin{equation}{section}
\newcommand{\hs}{\mathcal{H}}
\newcommand{\dd}{\,\textrm{d}}
\def\C{{\mathbb C}}
\def\R{{\mathbb R}}
\def\Z{{\mathbb Z}}
\def\N{{\mathbb N}}
\def\T{{\mathbb T}}
\begin{document}
\allowdisplaybreaks[4]

\title[Scaled convolution measure]{Scaling by $5$ on a $\frac{1}{4}$--Cantor measure}
\author[P.E.T. Jorgensen]{Palle E. T. Jorgensen}
\address[Palle E.T. Jorgensen]{Department of Mathematics, The University of Iowa, Iowa
City, IA 52242-1419, U.S.A.}
\email{jorgen@math.uiowa.edu}
\urladdr{http://www.math.uiowa.edu/\symbol{126}jorgen/}
\author[K.A. Kornelson]{Keri A. Kornelson}
\address[Keri Kornelson]{Department of Mathematics, The University of Oklahoma, Norman, OK, 73019-0315, U.S.A.}
\email{kkornelson@math.ou.edu}
\urladdr{http://www.math.ou.edu/\symbol{126}kkornelson/}
\author[K.L. Shuman]{Karen L. Shuman}
\address[Karen Shuman]{Department of Mathematics and Statistics,
Grinnell College, Grinnell, IA 50112-1690, U.S.A.}
\email{shumank@math.grinnell.edu}
\urladdr{http://www.math.grinnell.edu/\symbol{126}shumank/}
\thanks{The second and third authors were supported in part by NSF grant DMS-0701164.  The third author was supported in part by the Grinnell College Committee for the Support of Faculty Scholarship.}

\dedicatory{To the memory of William B. Arveson}

\subjclass[2000]{26A30; 42A63; 42A85, 46L45, 47L60, 58C40}

\keywords{Bernoulli convolution;  Cantor measure; Hilbert space; unbounded operators, fractal measures, Fourier expansions, spectral theory, decomposition theory}

\begin{abstract} 
Each Cantor measure $\mu$ with scaling factor $\frac{1}{2n}$ has at least one associated orthonormal basis of exponential functions (ONB) for $L^2(\mu)$.   In the particular case where the scaling constant for the Cantor measure is $\frac14$ and two specific ONBs are selected for $L^2(\mu_{\frac14})$, there is a unitary operator $U$ defined by mapping one ONB to the other.  This paper focuses on the case in which one ONB $\Gamma$ is the original Jorgensen-Pedersen ONB for the Cantor measure $\mu_{\frac{1}{4}}$ and the other ONB is is $5\Gamma$.  The main theorem of the paper states that the corresponding operator $U$ is ergodic in the sense that only the constant functions are fixed by $U$.
\end{abstract}

\maketitle

\renewcommand{\baselinestretch}{1.8} \large\normalsize
\section{Introduction}\label{Sec:Intro}

Infinite Bernoulli convolutions are special cases of affine self-similarity systems, also called iterated function systems (IFSs). Thus IFS measures generalize distributions of Bernoulli convolutions (see Section \ref{Subsec:History} for details).  Bernoulli convolutions in turn generalize Cantor measures.  For over a decade, it has been known that a subclass of IFS measures $\mu$ have associated Fourier bases for $L^2(\mu)$ \cite{JoPe98}.    If $L^2(\mu)$ does have a Fourier ONB with Fourier frequencies $\Gamma \subset \mathbb{R}$, we then say that $(\mu, \Gamma)$ is a \textbf{spectral pair}.  In the case that a set of Fourier frequencies exist for $L^2(\mu)$, we say $\Gamma$ is a Fourier \textbf{dual} set for $\mu$ or that $\Gamma$ is a \textbf{spectrum} for $\mu$; we say $\mu$ is a \textbf{spectral measure}.  The goal of this paper is to examine the operator $U$ which scales one spectrum into another spectrum.  We observe how the intrinsic scaling (by $4$) which arises in our set $\Gamma$ interacts with the spectral scaling (to $5\Gamma$) that defines $U$.  We call $U$ an operator-fractal due to its self-similarity, which is described in detail in \cite{JKS11}.

Our main theorem is Theorem \ref{Thm:U_Ergodic}, which states that the only functions which are fixed by $U$ are the constant functions---in other words, $U$ is an ergodic operator in the sense of Halmos \cite{Hal56}.  The background, techniques, and structures necessary to prove this theorem are developed carefully in Sections \ref{Sec:SpecThm} and \ref{Sec:Measures}.  Later, in Section \ref{Sec:MPT}, we explore the interaction of the respective scalings by $4$ and by $5$  (see, for example, Corollary \ref{Cor:Meas}).
 
 The duality results in \cite{JoPe98} can be highly non-intuitive.  For example, when the scaling factor is $\frac13$---that is, $\mu_{\frac13}$ is the Cantor-Bernoulli measure for the omitted third Cantor set construction---there is no Fourier basis.  In other words, there is no Fourier series representation in $L^2(\mu_{\frac13})$.  In fact, there can be at most two orthogonal Fourier frequencies in $L^2(\mu_{\frac13})$  \cite{JoPe98}. But if we modify the Cantor-Bernoulli construction, using scale $\frac14$, as opposed to $\frac13$, then the authors of \cite{JoPe98} proved that a Fourier basis does exist in $L^2(\mu_{\frac14})$.  They showed much more: \textit{each} of the Cantor-Bernoulli measures $\mu_{\frac{1}{2n}}$ with $n\in\N$ has a Fourier basis. For each of these measures, there is a canonical choice for a Fourier dual set $\Gamma$.
  
We consider here a particular additional symmetry relation for the subclass of Cantor-Bernoulli measures that form spectral pairs. Starting with a spectral pair $(\mu, \Gamma)$, we consider an action which scales the set $\Gamma$. In the special case of $\mu_{\frac14}$, we scale $\Gamma$ by $5$.  Scaling by $5$ induces a natural unitary operator $U$ in $L^2(\mu_{\frac14})$, and we study the spectral-theoretic properties of $U$.  

The measure $\mu_{\frac14}$ and its support $X_{\frac14}$ admit the similarity scaling laws shown in Equations \eqref{Eqn:Attr} and \eqref{Eqn:Inv}---scaling by $\frac14$---which we call affine scaling in the \textbf{small}.  The canonical construction of the dual set $\Gamma$ of Fourier frequencies in \cite{JoPe98} uses scaling by powers of $4$ in the \textbf{large}.  Elements of $L^2(\mu_{\frac14})$ are lacunary Fourier series, with the lacunary Fourier bases involving powers of $4$ (see Equation \eqref{eqn:onb} below).

\subsection{Bernoulli convolution measures:  a brief discussion}\label{Subsec:History}The Bernoulli convolution measure with scaling factor $\lambda$, the measure $\mu_{\lambda}$, can be defined in several equivalent ways.  Here, we will describe a probabilistic method and and IFS method to obtain the measure $\mu_{\lambda}$.

In probability theory, one can define the measure $\mu_{\lambda}$ with the distribution of a random variable $\mathbf{Y}_{\boldsymbol{\lambda}}$.  For each $k\in\N$, let 
\[Y_k: \prod_{k=1}^{\infty} \{-1, 1\}\rightarrow \{-1, 1\}\]
be defined by
\begin{equation}
Y_k(\omega_1, \omega_2, \omega_3, \ldots) = \omega_k.
\end{equation}

\begin{lemma}
Define $\mathbf{Y}_{\boldsymbol{\lambda}}$ by
\begin{equation}
\mathbf{Y}_{\boldsymbol{\lambda}}= 
 \sum_{k=1}^{\infty}Y_k\lambda^k.
\end{equation}  Then 
\begin{equation}
\mathbb{E}_{\lambda}(e^{i \mathbf{Y}_{\boldsymbol{\lambda}} t })
= \prod_{k=1}^{\infty} \cos(\lambda^k t).
\end{equation}
\end{lemma}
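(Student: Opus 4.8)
The plan is to recognize the statement as the standard computation of a characteristic function, with the only substantive work being the justification of an interchange of limit and expectation. First I would fix the probabilistic setup implicit in the statement: the sample space $\Omega = \prod_{k=1}^{\infty}\{-1,1\}$ carries the infinite product measure $\mathbb{P}$ under which each coordinate is the fair Bernoulli law $\tfrac12(\delta_{-1}+\delta_{+1})$, so that the coordinate functions $Y_k$ become a sequence of independent, identically distributed random variables each taking the values $\pm 1$ with probability $\tfrac12$. I would also record that $\mathbf{Y}_{\boldsymbol{\lambda}}$ is genuinely well defined: since $0<\lambda<1$ (indeed $\lambda=\tfrac{1}{2n}$ in the cases of interest) and $|Y_k\lambda^k|=\lambda^k$, the partial sums $S_N=\sum_{k=1}^{N}Y_k\lambda^k$ converge uniformly on $\Omega$ by comparison with the geometric series $\sum_k \lambda^k$, so $\mathbf{Y}_{\boldsymbol{\lambda}}$ is a bounded measurable function and $\mathbb{E}_{\lambda}(e^{i\mathbf{Y}_{\boldsymbol{\lambda}}t})$ makes sense for every $t$.

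Next I would carry out the computation on finite truncations, where independence does all the work. Writing
\[
e^{i t S_N}=\exp\!\Big(i t \sum_{k=1}^{N} Y_k\lambda^k\Big)=\prod_{k=1}^{N}e^{i t Y_k\lambda^k},
\]
the independence of the $Y_k$ gives
\[
\mathbb{E}_{\lambda}\big(e^{i t S_N}\big)=\prod_{k=1}^{N}\mathbb{E}_{\lambda}\big(e^{i t Y_k\lambda^k}\big),
\]
and each factor is evaluated directly from the two-point distribution of $Y_k$:
\[
\mathbb{E}_{\lambda}\big(e^{i t Y_k\lambda^k}\big)=\tfrac12 e^{i t \lambda^k}+\tfrac12 e^{-i t \lambda^k}=\cos(\lambda^k t).
\]
Hence $\mathbb{E}_{\lambda}(e^{itS_N})=\prod_{k=1}^{N}\cos(\lambda^k t)$ for every finite $N$.

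Finally I would pass to the limit $N\to\infty$. Because $S_N\to\mathbf{Y}_{\boldsymbol{\lambda}}$ pointwise on $\Omega$ and $x\mapsto e^{itx}$ is continuous, we have $e^{itS_N}\to e^{i\mathbf{Y}_{\boldsymbol{\lambda}}t}$ pointwise, and since $|e^{itS_N}|\equiv 1$ is dominated by the constant $1\in L^1(\mathbb{P})$, the dominated convergence theorem lets me move the limit inside the expectation to obtain
\[
\mathbb{E}_{\lambda}\big(e^{i\mathbf{Y}_{\boldsymbol{\lambda}}t}\big)=\lim_{N\to\infty}\prod_{k=1}^{N}\cos(\lambda^k t)=\prod_{k=1}^{\infty}\cos(\lambda^k t),
\]
the infinite product converging since $\cos(\lambda^k t)=1+O(\lambda^{2k})$ and $\sum_k\lambda^{2k}<\infty$. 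The only step requiring genuine care — the main (mild) obstacle — is this interchange of the infinite product with the expectation; everything else is the fair-coin moment computation and the convergence bookkeeping guaranteed by $\lambda<1$. I would therefore present the dominated-convergence justification explicitly, as it is the single point where the argument could break down if one were careless about convergence of $\mathbf{Y}_{\boldsymbol{\lambda}}$.
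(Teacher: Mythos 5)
Your proof is correct and follows essentially the same route as the paper: factor the expectation over the independent coordinates $Y_k$ and evaluate each factor as $\tfrac12 e^{i\lambda^k t}+\tfrac12 e^{-i\lambda^k t}=\cos(\lambda^k t)$. The only difference is that you explicitly justify the passage to the infinite product via truncation and dominated convergence, a step the paper performs formally without comment.
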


\noindent \textbf{Note:  }In older notation, $\mathbf{Y}_{\boldsymbol{\lambda}}$ is sometimes written $\mathbf{Y}_{\boldsymbol{\lambda}}= \sum_{k=1}^{\infty}(\pm 1)_k\lambda^k$.
Whatever notation is used, $Y_k$ or $(\pm 1)_k$ is the outcome of the binary coin-toss where each of the two outcomes, heads $(+1)$ and tails $(-1)$, is equally likely.  These coin-tosses are independent of each other and identically distributed.  

\textbf{Proof:  }If $\mathbb{E}_{\lambda}$ denotes the expectation of the random variable 
$\mathbf{Y}_{\boldsymbol{\lambda}}$, then for all $t\in\R$,
\begin{equation}
\begin{split}\label{Eqn:Expect}
\mathbb{E}_{\lambda}(e^{i \mathbf{Y}_{\boldsymbol{\lambda}} t })
& = \mathbb{E}_{\lambda}(e^{\sum_{k}Y_k\lambda^k}) \\
& = \prod_{k=1}^{\infty} \mathbb{E}_{\lambda}(e^{i Y_k \lambda^k t}),
\end{split}
\end{equation}
where independence of the random variables $Y_k$ is used to obtain the second line in Equation \eqref{Eqn:Expect}.   Because the two outcomes $-1$ and $+1$ are equally likely, we obtain
\begin{equation}\label{Eqn:Expect2}
\begin{split}
\mathbb{E}_{\lambda}(e^{i \mathbf{Y}_{\boldsymbol{\lambda}}(\cdot) t })
& = \prod_{k=1}^{\infty} \Bigl( \frac12 e^{i \lambda^k t} + \frac12 e^{-i \lambda^k t}\Bigr) \\
& = \prod_{k=1}^{\infty} \cos(\lambda^k t).\\
\end{split}
\end{equation}\hfill$\Box$

For more details about random Fourier series and this approach to the measure $\mu_{\lambda}$, see \cite{Kah85} and \cite[Chapter 5]{Jor06}.

Another way to generate the measure $\mu_{\lambda}$ is from an \textbf{iterated function system} (IFS) with two affine maps 
\begin{equation}\label{Eqn:Aff_Maps}
\tau_+(x) = \lambda(x+1) \quad \textrm{and}\quad \tau_-(x) = \lambda(x-1).
\end{equation}
By Banach's fixed point theorem, there exists a compact subset of the line, denoted $X_{\lambda}$ and called the \textbf{attractor} of the IFS, which satisfies the invariance property 
\begin{equation}\label{Eqn:Attr}
X_{\lambda} = \tau_+(X_{\lambda}) \cup \tau_-(X_{\lambda}).
\end{equation}
Hutchinson proved that there exists a unique measure $\mu_{\lambda}$  corresponding to the IFS \eqref{Eqn:Aff_Maps}, which is supported on $X_{\lambda}$ and is invariant in the sense that 
\begin{equation}\label{Eqn:Inv}
\mu_{\lambda} = \frac{1}{2} \Bigl(\mu_{\lambda} \circ \tau_+^{-1}\Bigr) + \frac{1}{2} \Bigl(\mu_{\lambda} \circ \tau_-^{-1}\Bigr)
\end{equation}
\cite[Theorems 3.3(3) and 4.4(1)]{Hut81}.
The property in Equation \eqref{Eqn:Inv} defines the measure $\mu_{\lambda}$ and can be used to compute its Fourier transform.  The Fourier transform of $\mu_{\lambda}$ is precisely the same function we saw in Equation \eqref{Eqn:Expect2}:
\begin{equation}
\widehat{\mu}_{\lambda}(t) = \prod_{k=1}^{\infty} \cos(\lambda^k t).
\end{equation}

Bernoulli convolution measures have been studied in various settings, long before IFS theory was developed.  Some of the earliest papers on Bernoulli convolution measures date to the 1930s and work with an infinite convolution definition for $\mu_{\lambda}$; they are \cite{JW35, KW35, W35, E39}.  The history of Bernoulli convolutions up to 1998 is detailed in \cite{PSS98}.

\subsection{Notation, terminology, and summary of results}\label{Subsec:Notation}

We will use the notation $e_t(\cdot)$ to denote the complex exponential function $e^{2\pi i t (\cdot)}$.  Given a set $\Gamma \subseteq \mathbb{R}$, we denote by $E(\Gamma)$ the set $\{e_{\gamma}\,:\, \gamma \in \Gamma\}$.    Throughout, we fix $\lambda = \frac14$ in Equations \eqref{Eqn:Expect2} and \eqref{Eqn:Inv} and work exclusively with the Bernoulli convolution measure $\mu_{\frac14}$, which we call $\mu$.  We will work with the set $\Gamma$ originally defined by \cite{JoPe98} as   
\begin{equation}\label{eqn:onb} 
\begin{split}
\Gamma 
& = \Biggl\{ \sum_{i=0}^m a_i 4^i\,:\, a_i \in \{0,1\}, m \textrm{ finite} \Biggr\}\\
& = \{0, 1, 4, 5, 16, 17, 20, 21, 64, 65, \ldots\}.
\end{split}
\end{equation} 
Jorgensen and Pedersen  showed that $\Gamma$ is a spectrum for $\mu$---that is,  the set of exponential functions $E(\Gamma)$ is an orthonormal basis for $L^2(\mu)$ \cite[Theorem 5.6 and Corollary 5.9]{JoPe98}. 

It is known that other scaling symmetries are possible in $L^2(\mu)$; examples are given in \cite{LaWa02, DuJo09d, JKS11b}.  In particular,  Dutkay and Jorgensen have shown that the ONB property is preserved under scaling by powers of $5$---that is, for each $n\in \N$, each scaled set $5^n\Gamma$ is also an exponential ONB for $L^2(\mu)$  \cite[Proposition 5.1]{DuJo09d}.  This result may be counterintuitive since the resulting scaled set \eqref{Eqn:5Gamma} of Fourier frequencies  appears quite ``thin''.  In this paper, we will restrict our attention to the case $n=1$:
\begin{equation}\label{Eqn:5Gamma}
5\Gamma = \{0, 5, 20, 25, 80, 85, 100, 105, 320, \ldots\}.
\end{equation} 
 
The $5$--scaling property for the ONB \eqref{eqn:onb} induces a unitary operator 
$U$ in $L^2(\mu)$, as given in the next definition.
\begin{definition} Define the operator $U$ on the orthonormal basis $E(\Gamma)$ by
\begin{equation}\label{eqn:U}   
U(e_{\gamma}) := e_{5\gamma}. 
\end{equation}   
\end{definition}
In \cite{JKS11}, we gave operators such as $U$ the name \textit{operator-fractals} due to the self-similarity they exhibit.  Due to this self-similar structure, the spectral representation and the spectral resolution for $U$ are surprisingly subtle.  Despite this, we are able to establish ergodic and spectral-theoretic properties of the unitary operator $U$.  Our main theorems are Theorem \ref{Thm:Hv=Kv}, Theorem \ref{Prop:R_v^*}, and Theorem \ref{Thm:U_Ergodic}.  
\begin{itemize}
\item In Theorem \ref{Thm:Hv=Kv}, we establish the correspondence between Hilbert spaces associated with real measures $m_v$  arising from projection-valued measures and $U$-cyclic subspaces of $L^2(\mu_{\frac14})$.
\item We find an explicit formula for the adjoint of the intertwining operator relating the $U$-cyclic subspace containing $v\in L^2(\mu_{\frac14})$ and $L^2(m_v)$ in Theorem \ref{Prop:R_v^*} . 
\item In our major theorem, Theorem \ref{Thm:U_Ergodic}, we prove that the only functions fixed by $U$ are constant functions---that is, $U$ is an ergodic operator.
\end{itemize}

\subsection{Organization of the paper}
We begin in Section \ref{Sec:Intro} with a background discussion of Fourier bases on Cantor measures and motivate our interest in the operator-fractal $U$.  The proofs of our main theorems rely on projection-valued measures and the spectral theorem for unitary operators, for which we provide a brief background in Section \ref{Sec:SpecThm}.  In Section \ref{Sec:Measures}, we examine $U$-cyclic subspaces of $L^2(\mu)$ in detail using Nelson's theory of $\sigma$-classes.  The material in these sections is a blend of standard theorems, known results with new proofs, and some new results which lead us to our main theorem.   We prove the main theorem --- the ergodicity of $U$ --- in Section \ref{Sec:Spectral}.  In Section \ref{Sec:MPT}, we explore various aspects of the relationships of the scaling factors $(\times 4)$ and $(\times 5)$ inherent in the operator $U$. 

\subsection{Motivation for the study of the operator $U$}\label{Subsec:Motiv}

Equations \eqref{eqn:onb} and \eqref{Eqn:5Gamma} show that $5\Gamma$ is not contained in $\Gamma$, so it would be surprising if $U$ behaved well with respect to iteration---and in fact, it does not.  

\begin{proposition}
The formula $U^ke_{\gamma} = e_{5^k \gamma}$ does not hold in general.  
\end{proposition}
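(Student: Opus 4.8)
The plan is to locate a single pair $(\gamma,k)$ for which $U^k e_\gamma\neq e_{5^k\gamma}$. The formula $U^ke_\gamma = e_{5^k\gamma}$ can only propagate if, at every stage, the intermediate frequency $5^j\gamma$ again lies in $\Gamma$: the definition \eqref{eqn:U} gives $Ue_\delta = e_{5\delta}$ only for $\delta\in\Gamma$, and once $5^j\gamma\notin\Gamma$ the vector $e_{5^j\gamma}$ is a nontrivial $E(\Gamma)$-superposition, on which $U$ acts by relabeling each frequency rather than by a clean scaling. Since $5\Gamma\not\subseteq\Gamma$ (compare \eqref{eqn:onb} and \eqref{Eqn:5Gamma}), such an escape is unavoidable. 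The smallest instance is $\gamma = 5$: in base $4$ we have $5 = (11)_4$, so $5\cdot 5 = 25 = (121)_4$ carries a forbidden digit $2$ and $25\notin\Gamma$. I therefore take $\gamma = 5$, $k = 2$ (equivalently $\gamma = 1$, $k = 3$, as $U^3 e_1 = Ue_{25} = U^2 e_5$), and aim to show $U^2 e_5\neq e_{125}$.

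To turn this into a concrete numerical inequality, write $\widehat\mu(u) = \int e^{2\pi iux}\,d\mu(x) = \prod_{k\geq1}\cos(2\pi u/4^k)$, the real-valued transform for which $\langle e_s, e_t\rangle_{L^2(\mu)} = \widehat\mu(s-t)$. Using only unitarity of $U$ together with the two valid relations $Ue_1 = e_5$ and $Ue_5 = e_{25}$, I compute
\begin{equation*}
\langle U^2 e_5,\, e_5\rangle = \langle U^2 e_5,\, Ue_1\rangle = \langle Ue_5,\, e_1\rangle = \langle e_{25},\, e_1\rangle = \widehat\mu(24).
\end{equation*}
If the formula held, then $U^2 e_5 = e_{125}$ would instead give $\langle U^2 e_5,\, e_5\rangle = \langle e_{125},\, e_5\rangle = \widehat\mu(120)$. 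Hence it suffices to prove $\widehat\mu(24)\neq\widehat\mu(120)$.

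For the final evaluation I would use the scaling identity $\widehat\mu(4u) = \cos(2\pi u)\,\widehat\mu(u)$, which follows by peeling off the first factor of the product. Two applications give $\widehat\mu(24) = \widehat\mu(6) = -\widehat\mu(\tfrac{3}{2})$ and $\widehat\mu(120) = \widehat\mu(30) = -\widehat\mu(\tfrac{15}{2})$, so the claim reduces to
\begin{equation*}
\prod_{k\geq1}\cos\!\Bigl(\frac{3\pi}{4^k}\Bigr)\ \neq\ \prod_{k\geq1}\cos\!\Bigl(\frac{15\pi}{4^k}\Bigr),
\end{equation*}
which I would settle by truncation: the factors approach $1$ geometrically, so a short initial product together with an explicit tail bound separates the two values (numerically $-0.581$ versus $-0.505$).

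The step I expect to be the real obstacle is precisely this last comparison, and it is instructive to see why nothing softer works. Both $E(\Gamma)$ and $E(5\Gamma)$ are orthonormal bases, so $e_\gamma\mapsto e_{5\gamma}$ is automatically isometric and carries no orthogonality obstruction; moreover the zero set of $\widehat\mu$ on the integers is governed by the parity of the $2$-adic valuation of its argument, and multiplication by the odd number $5$ preserves that parity. Consequently $U^2 e_5$ and $e_{125}$ have exactly the same support when expanded in the ONB $E(5\Gamma)$, so the discrepancy is invisible at the level of which coefficients vanish. The failure can only be detected by comparing the actual values of two surviving coefficients, which is why the argument must ultimately come down to distinguishing two transcendental infinite cosine products.
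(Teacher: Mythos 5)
Your proposal is correct and uses the same counterexample as the paper ($\gamma=1$, $k=3$, equivalently $\gamma=5$, $k=2$), but the way you detect the discrepancy is genuinely different and, I think, cleaner. The paper expands $e_{25}$ in the ONB $E(\Gamma)$, applies $U$ term by term, and compares the coefficient of $e_5$ in $U^3e_1$, namely $\sum_{\gamma\in\Gamma}\widehat{\mu}(25-\gamma)\widehat{\mu}(5\gamma-5)\approx 0.58$, against $\widehat{\mu}(120)\approx 0.50$ in $e_{125}$; the first quantity is an infinite sum evaluated numerically (512 terms in \emph{Mathematica}). You instead use unitarity together with the two legitimate relations $Ue_1=e_5$ and $Ue_5=e_{25}$ to collapse that same coefficient to a single inner product, $\langle U^2e_5,e_5\rangle=\langle Ue_5,e_1\rangle=\widehat{\mu}(24)$, reducing the whole proposition to the scalar inequality $\widehat{\mu}(24)\neq\widehat{\mu}(120)$, i.e.\ $\widehat{\mu}(\tfrac32)\neq\widehat{\mu}(\tfrac{15}{2})$ after peeling off factors. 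This buys two things: it replaces an infinite sum over $\Gamma$ by two explicit infinite cosine products whose separation ($\approx -0.581$ vs.\ $\approx -0.505$) can be certified rigorously with a short truncation and an elementary tail bound of the form $\prod_{k\geq K}\cos(\theta/4^k)\geq 1-\tfrac{\theta^2}{2}\sum_{k\geq K}16^{-k}$; and it makes transparent exactly which structural facts are used (unitarity plus membership of $1,5$ in $\Gamma$). Your values are consistent with the paper's ($\widehat{\mu}(24)\approx 0.581$ matches their $0.58$). The only step you leave as a plan is the final numerical separation, but since the paper's own proof rests on an unbounded numerical summation, your version is if anything the more easily completed one; do carry out the truncation-plus-tail estimate explicitly if you write this up.
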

\textbf{Proof:  }It is sufficient to prove inequality for a specific example: consider the case $\gamma = 1$ and $k = 3$.  We have $U(e_1) = e_{5}$, and since $5\in \Gamma$, $U^2(e_{1}) = e_{25}$.  However, $25\not\in\Gamma$, so we expand $e_{25}$ in terms of $E(\Gamma)$ to compute $U(e_{25})$:
\begin{equation}\label{Eqn:U^3e_1}
\begin{split}
U(e_{25})
& = U\Bigl(\sum_{\gamma\in\Gamma} \widehat{\mu}_{\frac14}(25 - \gamma)e_{\gamma} \Bigr)
 = \sum_{\gamma\in\Gamma}\widehat{\mu}_{\frac14}(25 - \gamma)e_{5\gamma}\\
& = \sum_{\gamma\in\Gamma}\widehat{\mu}_{\frac14}(25 - \gamma)
       \Bigl(\sum_{\xi\in\Gamma} \widehat{\mu}_{\frac14}(5\gamma-\xi)e_{\xi}\Bigr)\\
& = \sum_{\xi, \gamma\in\Gamma} \widehat{\mu}_{\frac14}(25 - \gamma)\widehat{\mu}_{\frac14}(5\gamma-\xi)e_{\xi}.
\end{split}
\end{equation}
On the other hand,
\begin{equation}\label{Eqn:e_125}
e_{125} = \sum_{\xi\in\Gamma} \widehat{\mu}_{\frac14}(125 - \xi)e_{\xi}.
\end{equation}
Now compare the $\xi=5$ term in Equations \eqref{Eqn:U^3e_1} and \eqref{Eqn:e_125}.  In Equation \eqref{Eqn:e_125}, the coefficient of $e_5$ is $\widehat{\mu}_{\frac14}(120) =  \widehat{\mu}_{\frac14}(30)\approx 0.50$.  In Equation \eqref{Eqn:U^3e_1}, the coefficient of $e_5$ is 
\[
\sum_{\gamma\in\Gamma}  \widehat{\mu}_{\frac14}(25 - \gamma)\widehat{\mu}_{\frac14}(5\gamma-5)
\approx 0.58.
\]
The approximations were made with $512$ terms of $\Gamma(\frac14)$ in \textit{Mathematica}.

\hfill$\Box$

\noindent\textbf{Corollary.  }\textit{$U$ is not implemented by a transformation of the form $U(f) = f\circ\tau$ where $\tau(x) = 5x \pmod{1}$.}
  
In Section \ref{Sec:MPT}, we'll see that the operator $U$ cannot be spatially implemented by any point transformation.  The distinction between the behavior of unitary operators which are implemented by such a transformation $\tau$ and the behavior of the unitary operator $U$  is one of the motivations for why we study $U$ in detail.  One of our main theorems, Theorem \ref{Thm:U_Ergodic}, states that the only functions fixed by $U$ are the constant functions.  While our unitary operator $U$ is not spatially implemented, we can still form Cesaro means of its iterations, and one of the corollaries of Theorem \ref{Thm:U_Ergodic} is an application of the von Neumann ergodic theorem in Section \ref{Sec:MPT}.  

Another motivation comes from the relationship $U$ has with the representation of the Cuntz algebra $\mathcal{O}_2$ which is realized by the two operators
\[ S_0(e_{\gamma}) = e_{4\gamma} \textrm{ and } S_1(e_{\gamma}) = e_{4\gamma+1}\]
defined on the ONB $E(\Gamma)$.
The operator $U$ commutes with $S_0$ but does not commute with $S_1$.  The fact that $U$ does not commute with $S_1$ makes its spectral theory harder to understand, but the commuting with $S_0$ gives us a foothold into its spectral theory.  The relationship between $U$ and operators forming the representation of $\mathcal{O}_2$ is studied in detail in \cite{JKS11}.

We make a preliminary observation about how $U$ scales elements of the ONB $E(\Gamma)$. 
\begin{lemma}
Suppose $\gamma\in\Gamma$ and $\lambda\in \T$ are such that
\begin{equation}
Ue_{\gamma} = \lambda e_{\gamma}\in L^2(\mu_{\frac14}).
\end{equation}
Then $\gamma = 0$ and $\lambda = 1$.
\end{lemma}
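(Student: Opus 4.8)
The plan is to convert the eigenvector relation into a single scalar identity for the Fourier transform $\widehat{\mu}$ and then to exploit the lacunary (powers-of-$4$) structure of $\widehat{\mu}$. By the definition of $U$ in Equation \eqref{eqn:U}, the hypothesis $Ue_{\gamma} = \lambda e_{\gamma}$ reads $e_{5\gamma} = \lambda e_{\gamma}$ in $L^2(\mu)$. One is tempted to argue directly from orthonormality of $E(\Gamma)$: if $5\gamma$ happened to lie in $\Gamma$, then $e_{5\gamma}$ would be an ONB vector orthogonal to $e_{\gamma}$ unless $5\gamma = \gamma$, forcing $\gamma = 0$. However $5\gamma$ need not lie in $\Gamma$ (for instance $5\cdot 5 = 25\notin\Gamma$), so this shortcut is unavailable in general and one must instead use the analytic structure of $\mu$.

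Concretely, first I would pair both sides of $e_{5\gamma} = \lambda e_{\gamma}$ with the unit vector $e_{\gamma}$ and invoke the reproducing relation $\langle e_a, e_b\rangle_{L^2(\mu)} = \widehat{\mu}(a-b)$ (the same identity underlying the expansions in Equations \eqref{Eqn:U^3e_1} and \eqref{Eqn:e_125}). This yields $\lambda = \langle e_{5\gamma}, e_{\gamma}\rangle = \widehat{\mu}(4\gamma)$. Equivalently, since $e_{5\gamma}$ and $e_{\gamma}$ have modulus $1$ pointwise, the relation says that $e_{4\gamma}$ equals the constant $\lambda$ $\mu$-almost everywhere, and integrating against $\mu$ recovers $\lambda = \widehat{\mu}(4\gamma)$. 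Because $\lambda \in \T$, this forces $|\widehat{\mu}(4\gamma)| = 1$.

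The remaining step---which is the only real content---is to show that $|\widehat{\mu}(t)| = 1$ holds only at $t = 0$. Using the product formula $\widehat{\mu}(t) = \prod_{k=1}^{\infty}\cos(2\pi\, 4^{-k}t)$, each factor has modulus at most $1$, so a product of modulus $1$ forces $\cos(2\pi\, 4^{-k}t) = \pm 1$, i.e. $2\cdot 4^{-k}t \in \Z$, for every $k \ge 1$. As $k \to \infty$ the quantity $2\cdot 4^{-k}t$ tends to $0$, while a nonzero integer has absolute value at least $1$; hence $2\cdot 4^{-k}t = 0$ for all large $k$, which forces $t = 0$. Applying this with $t = 4\gamma$ and using $\gamma \ge 0$ gives $4\gamma = 0$, so $\gamma = 0$, and then $\lambda = \widehat{\mu}(0) = 1$. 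The main obstacle, such as it is, lies precisely in this last step: one needs the rapid ($4^{-k}$) decay of the arguments in the infinite product to rule out all nonzero integer frequencies $4\gamma$, since the orthogonality of $E(\Gamma)$ alone does not settle the case $5\gamma \notin \Gamma$.
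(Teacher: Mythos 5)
Your proof is correct, but it takes a genuinely different route from the paper's, and your opening remark that the orthogonality shortcut ``is unavailable'' is not quite right. The paper's argument is precisely a (normalized) version of that shortcut: since $e_{-\gamma}$ has modulus one pointwise, multiplication by it is an isometry of $L^2(\mu)$, so $\|e_{5\gamma}-\lambda e_{\gamma}\|^2=\|e_{4\gamma}-\lambda e_{0}\|^2$; and while $5\gamma$ need not lie in $\Gamma$, the point is that $4\gamma$ \emph{does} (the set $\Gamma$ of Equation \eqref{eqn:onb} is closed under multiplication by $4$), so $e_{4\gamma}$ and $e_0$ are distinct ONB vectors and the norm equals $2$, a contradiction. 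Your route instead extracts the scalar identity $\lambda=\widehat{\mu}(4\gamma)$, hence $|\widehat{\mu}(4\gamma)|=1$, and then uses the infinite product $\widehat{\mu}(t)=\prod_{k\geq 1}\cos(2\pi\,4^{-k}t)$: since the partial products of the moduli are nonincreasing and bounded by each factor, a limit of modulus $1$ forces every factor to be $\pm 1$, i.e. $2\cdot 4^{-k}t\in\Z$ for all $k$, which can only happen for $t=0$. That argument is sound (the absolute convergence of the product and the monotonicity of the partial products justify the ``each factor has modulus $1$'' step), and it buys something the paper's proof does not: it never invokes the Jorgensen--Pedersen orthogonality of $E(\Gamma)$, and it actually rules out $e_{5\gamma}=\lambda e_{\gamma}$ for \emph{every} nonzero real $\gamma$, not just $\gamma\in\Gamma$. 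The trade-off is length: the paper's two-line computation exploits the $4$-invariance of $\Gamma$ to finish immediately. (Your parenthetical ``using $\gamma\geq 0$'' is superfluous---$4\gamma=0$ gives $\gamma=0$ outright.)
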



\textbf{Proof:  }Suppose $\gamma\in\Gamma\backslash\{0\}$.  If $Ue_{\gamma} = \lambda e_{\gamma}$, then
\begin{equation}
\begin{split}
0
& = \|Ue_{\gamma} - \lambda e_{\gamma}\|^2_{L^2(\mu_{\frac14})}\\
& = \|e_{5\gamma} - \lambda e_{\gamma}\|^2_{L^2(\mu_{\frac14})}\\
& =  \|e_{4\gamma} - \lambda e_{0}\|^2_{L^2(\mu_{\frac14})} = 2
\end{split}
\end{equation}
since $e_0$ and $e_{4\gamma}$ are distinct elements of the ONB $E(\Gamma)$.  Therefore we have a contradiction.\hfill$\Box$

\subsection{Recent developments and associated literature}\label{Subsec:Recent}

The paper which started much of the work considered here is \cite{JoPe98}.  Since then, a large literature on duality and spectral theory for affine dynamical systems has evolved.  Here, we point out just a few of the most recent developments in the field.  First, the papers of J.-L.\! Li  study orthogonal exponential functions with respect to invariant measures \cite{Li09, Li10a, Li10b}; the papers \cite{AK09}, \cite{XZ08}, \cite{HL08}, and \cite{JKS08} also fit into this framework.  The work of Dutkay and Jorgensen and their coauthors, some of which has already been mentioned centers on Fourier duality:  \cite{DHJ09, DHS09, DuJo09a, DuJo09b, DuJo09c, DJP09}.   Spectral measures for affine IFSs are also studied in the works \cite{FeWa05, LaWa06, FeWa09}.   The relationship of wavelets and frames to self-similar measures is explored in \cite{BoKe10, DHSW11}.   The works of Gabardo and his coauthors are also highly relevant:  \cite{GN98, Gab00, GY06, GY07}.  

\section{The spectral theorem and some of its consequences}\label{Sec:SpecThm}

Starting with the spectral pair $(\mu_{\frac14}, \Gamma)$, where $\Gamma$ is given in \eqref{eqn:onb}, we study a unitary operator $U$ in $L^2(\mu)$ corresponding to a scaling of $\Gamma$ by $5$ in detail.  In order to understand $U$, we ask for its spectrum. Recalling that the projection-valued measure for $U$ is generated by scalar measures in each of the $U$-cyclic subspaces in $L^2(\mu)$, we are faced with some delicate issues from spectral theory.  In particular, there are properties of the cyclic subspaces that demand attention. In fact, in Section \ref{Sec:Measures} below, we prove a characterization theorem which may be of independent interest in a more general framework.

In both Sections \ref{Sec:Measures} and \ref{Sec:Spectral}, we depend heavily on the spectral theorem for unitary operators and some of its consequences.  We collect the necessary results in this section for easy reference; our primary resources are the books by Baggett \cite[Chapters IX and X]{Bag92}, Dunford and Schwartz \cite[Chapter X]{DunSch63}, and Nelson \cite[Chapter 6]{Nel69}.

\subsection{The spectral theorem}\label{Subsec:SpecThmOnly}

We start with the definition of a projection-valued measure.
\begin{definition}\label{Defn:pvm} \cite[p.~165]{Bag92}
Given a set $S$ and a $\sigma$-algebra $\mathcal{B}$ of subsets of $S$, and given a separable Hilbert space $\hs$, then a mapping $A \mapsto p_A$ from $\mathcal{B}$ to the projections on $\hs$ is called a \textbf{projection-valued measure (p.v.m.)} if 
\begin{enumerate} 
\item $p_S = I$ and $p_{\emptyset} = 0$.
\item If $\{A_k\}_{k=1}^{\infty}$ is a disjoint collection of sets from $\mathcal{B}$, then $\{p_{A_k}\}_{k=1}^{\infty}$ is a collection of orthogonal projections (i.e. $p_{A_k}p_{A_j} = 0$ in $\hs$ for $k \neq j$) and \[p_{\cup_k A_k} = \sum_k p_{A_k}.\]  
\end{enumerate}
\end{definition}  

In our setting, the set $S$ in Definition \ref{Defn:pvm} will be the circle $\mathbb{T}$, and the Hilbert space $\hs$ is $L^2(\mu)$.  Although the spectral theorem applies in more generality to normal bounded operators, we only use the spectral theorem for the unitary operator $U$, so we state that version here.  
\begin{theorem}\label{Thm:Spectral}\rm(\textbf{The Spectral Theorem for Unitary Operators})\\
\cite[Theorem 10.10, p.~200]{Bag92} \it
Let $U$ be a unitary operator on $\hs$.  Then there exists a unique Borel p.v.m.~ $E^{U}\!$ on the Borel space $(\mathbb{T}, \mathcal{B})$ such that 
\begin{equation}\label{Eqn:SpecThmInt}
U = \int_{\sigma(U)} z\, \rm{d}\it E^U\!(z).
\end{equation} 
The measure $E^U\!$ is supported on the spectrum of $U$, $\sigma(U)$.
\end{theorem}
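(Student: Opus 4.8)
The plan is to construct the projection-valued measure $E^U$ out of scalar spectral measures, exploiting the fact that unitarity ($U^* = U^{-1}$) forces the relevant moment sequences to be positive definite on $\Z$. This route is natural here because the scalar measures produced are exactly the objects $m_v$ used later in the paper. (An alternative would be to pass through the Cayley transform to the self-adjoint spectral theorem, but for a unitary the direct argument is cleaner.) First I would fix $v\in\hs$ and consider the two-sided sequence $c_n := \langle U^n v, v\rangle$ for $n\in\Z$. Because $U^* = U^{-1}$, this sequence is positive definite: for any finite choice of scalars $\{\alpha_k\}$ and integers $\{n_k\}$ one has $\sum_{j,k}\alpha_j\overline{\alpha_k}\,c_{n_j-n_k} = \|\sum_k \alpha_k U^{n_k} v\|^2 \ge 0$. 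The Herglotz--Bochner theorem for $\Z$ then provides a unique finite positive Borel measure $m_v$ on $\T$ with $\langle U^n v, v\rangle = \int_{\T} z^n\, dm_v(z)$ for all $n$. Polarizing in $v$ yields, for each pair $u,v\in\hs$, a complex Borel measure $m_{u,v}$ on $\T$, sesquilinear in $(u,v)$ and of total variation at most $\|u\|\,\|v\|$, satisfying $\langle U^n u, v\rangle = \int_{\T} z^n\, dm_{u,v}(z)$.

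Next I would promote these scalar measures to a bounded Borel functional calculus. For a bounded Borel function $f$ on $\T$, the form $(u,v)\mapsto \int_{\T} f\, dm_{u,v}$ is sesquilinear and bounded by $\|f\|_\infty\|u\|\,\|v\|$, so Riesz representation yields a bounded operator $\Phi(f)$ with $\langle \Phi(f)u, v\rangle = \int_{\T} f\, dm_{u,v}$. One verifies that $\Phi$ is linear, adjoint-preserving, unital, and $\Phi(z^n) = U^n$. The essential property is multiplicativity, $\Phi(fg) = \Phi(f)\Phi(g)$: this is immediate for trigonometric polynomials from $\Phi(z^n)=U^n$, extends to $C(\T)$ since such polynomials are uniformly dense (Stone--Weierstrass), and finally passes to all bounded Borel $f,g$ by a bounded-convergence argument on the measures $m_{u,v}$.

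I would then define $E^U(A) := \Phi(\mathbf{1}_A)$ for Borel $A\subseteq\T$. Since $\mathbf{1}_A^2 = \mathbf{1}_A = \overline{\mathbf{1}_A}$, each $E^U(A)$ is a self-adjoint idempotent, hence an orthogonal projection; moreover $E^U(\T) = \Phi(1) = I$ and $E^U(\emptyset) = 0$, multiplicativity gives $E^U(A)E^U(B) = E^U(A\cap B)$ so disjoint sets produce orthogonal projections, and countable additivity in the strong operator topology descends from countable additivity of the $m_{u,v}$. Thus $E^U$ satisfies Definition \ref{Defn:pvm}. Unwinding the definitions, $\langle U u, v\rangle = \int_{\T} z\, dm_{u,v}(z) = \int_{\T} z\, d\langle E^U(z)u, v\rangle$, which is precisely the asserted representation $U = \int z\, dE^U(z)$.

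Finally, to pin down the support I would show that if $z_0\notin\sigma(U)$ then $E^U$ annihilates a small arc about $z_0$: there the resolvent $(U-\zeta)^{-1}$ exists and $z\mapsto (z-\zeta)^{-1}$ is controlled on the arc, forcing $E^U$ of that arc to vanish, so $\mathrm{supp}(E^U)\subseteq\sigma(U)$ (the reverse inclusion is routine), while $\sigma(U)\subseteq\T$ since $U$ is unitary. For uniqueness, if both $E$ and $F$ represent $U$, then $\int z^n\, dE = U^n = \int z^n\, dF$ for every $n\in\Z$ (negative powers using $U^{-1}=U^*$), so $\langle E(\cdot)u,v\rangle$ and $\langle F(\cdot)u,v\rangle$ share all Fourier coefficients and hence agree by density of trigonometric polynomials in $C(\T)$; as $u,v$ are arbitrary, $E = F$. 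The hard part will be the passage from continuous to general bounded Borel functions in establishing multiplicativity of $\Phi$, and correspondingly the strong-operator countable additivity and orthogonality of the projections $E^U(A)$: this is exactly where dominated convergence for the whole family $\{m_{u,v}\}$ must be invoked carefully, since the uniform-density arguments that settle the continuous case no longer apply.
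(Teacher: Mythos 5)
The paper does not prove this statement at all: it is quoted as background directly from Baggett \cite[Theorem 10.10]{Bag92} (the whole of Section \ref{Sec:SpecThm} is announced as a collection of standard results ``for easy reference''), so there is no in-paper argument to compare yours against. Judged on its own, your outline is a correct and standard proof of the spectral theorem in the unitary case: positive definiteness of $n\mapsto\langle U^n v,v\rangle$ on $\Z$, Herglotz--Bochner to get the scalar measures $m_v$ on $\T$, polarization, a bounded Borel functional calculus $\Phi$ via Riesz representation, and $E^U(A):=\Phi(\mathbf{1}_A)$. This is more elementary and self-contained than the route Baggett actually takes (Gelfand theory for the commutative $C^*$-algebra generated by a normal operator, then specialization to unitaries), and it has the pleasant side effect of producing the measures $m_v$ of Equation \eqref{Eqn:m_v} directly, which is exactly what the rest of the paper uses. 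Two details deserve care if you write this out in full. First, the total-variation bound $\|m_{u,v}\|\le\|u\|\,\|v\|$ does not follow from naive polarization (which only gives a bound like $(\|u\|+\|v\|)^2$); you should instead note that $(u,v)\mapsto m_{u,v}(A)$ is a positive sesquilinear form for each Borel $A$, apply Cauchy--Schwarz to get $|m_{u,v}(A)|\le m_u(A)^{1/2}m_v(A)^{1/2}$, and sum over a partition --- though for the Riesz representation step any quadratic bound suffices. Second, as you yourself flag, the extension of multiplicativity from $C(\T)$ to bounded Borel functions requires a two-stage functional monotone class argument (fix $f$ continuous and vary $g$ through bounded pointwise limits, then fix $g$ Borel and vary $f$), with dominated convergence applied to each fixed $m_{u,v}$; this also delivers the strong-operator countable additivity of $E^U$ via $\|\sum_{k\le N}E^U(A_k)u-E^U(\cup_k A_k)u\|^2=m_u(\cup_{k>N}A_k)\to 0$. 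With those two points filled in, the argument, including your support and uniqueness paragraphs, is sound.
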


Next, we recall the \textbf{functional calculus} associated with the spectral theorem.  Given a Borel function $\phi$ on $\T$, we can study the associated operator $\phi(U)$.  The construction of $\phi(U)$ begins with the case where $\phi$ is a polynomial (with both positive and negative powers) and then extends to continuous functions and then to Borel functions.  The next lemma, which holds for $E^U$-essentially bounded functions $\phi:\T\rightarrow\C$, can be extended to suitable Borel functions $\phi$ by Lemma \ref{Thm:Domain}.   See both  \cite[Theorem 10.9]{Bag92} and \cite[Chapter X.2]{DunSch63}, especially Corollaries X.2.8 and X.2.9 and the material between the two corollaries for more information about the following lemma. 
\begin{lemma}\label{Thm:DSX28}
Suppose $U$ is a unitary operator on the Hilbert space $\hs$ with associated p.v.m.~ $E^U$\!, so that 
\[ U = \int_{\sigma(U)} z\: E^U\!(dz).\]  
Suppose $\phi, \phi_1, \phi_2:\T\rightarrow\C$ are $E^U\!$-essentially bounded, Borel-measurable functions.  Define 
\begin{equation}\label{Eqn:FunCal}
\pi_{U}(\phi) = \phi(U) = \int_{\sigma(U)} \phi(z) \,E^U\!(dz).
\end{equation}
Then
\begin{enumerate}[(i)]
\item $[\phi(U)]^* = \overline{\phi}(U)$.  In other words, $\pi_{U}$ is a $*$-homomorphism.
\item $\pi_{U}(\phi_1\phi_2) = \pi_U(\phi_1)\pi_U(\phi_2)$, and as a result, the operators $\phi_1(U)$ and $\phi_2(U)$ commute.
\item If $\phi(z) \equiv 1$, then $\phi(U)$ is the identity operator.
\item The operator $\phi(U)$ is bounded.
\end{enumerate}
\end{lemma}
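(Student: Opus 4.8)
The plan is to prove all four assertions first for simple (step) functions, where everything reduces to finite algebra of the projections $E^U(A)$, and then to pass to general $E^U$-essentially bounded Borel $\phi$ by a uniform-approximation argument controlled by a single operator-norm estimate. Before starting, I would record the one structural fact about the projection-valued measure that is not stated explicitly in Definition \ref{Defn:pvm} but follows from it, namely multiplicativity: for all $A, B \in \mathcal{B}$,
\[ E^U(A)\,E^U(B) = E^U(A\cap B). \]
This is obtained by decomposing $A = (A\setminus B)\sqcup(A\cap B)$ and $B = (B\setminus A)\sqcup(A\cap B)$, expanding the product using finite additivity on disjoint sets from part (2) of Definition \ref{Defn:pvm}, and noting that the cross terms vanish by orthogonality while $E^U(A\cap B)^2 = E^U(A\cap B)$.

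With this in hand, write a simple function as $\phi = \sum_{k=1}^n c_k \mathbf{1}_{A_k}$ with the $A_k \in \mathcal{B}$ pairwise disjoint, and define $\phi(U) = \sum_k c_k E^U(A_k)$ in accordance with \eqref{Eqn:FunCal}. Then (i) follows because each $E^U(A_k)$ is self-adjoint, so $[\phi(U)]^* = \sum_k \overline{c_k}\, E^U(A_k) = \overline{\phi}(U)$; (ii) follows by multiplying out $\pi_U(\phi_1)\pi_U(\phi_2)$ and applying the multiplicativity identity $E^U(A_j)E^U(B_\ell) = E^U(A_j\cap B_\ell)$, which is exactly the expansion of $\pi_U(\phi_1\phi_2)$ since $\phi_1\phi_2 = \sum_{j,\ell} c_j d_\ell \mathbf{1}_{A_j\cap B_\ell}$, and commutativity is then immediate from $\phi_1\phi_2 = \phi_2\phi_1$; (iii) is the normalization $E^U(\mathbb{T}) = I$ from part (1) of Definition \ref{Defn:pvm}; and (iv) holds because for $x\in\hs$ the scalars $\langle E^U(A_k)x, x\rangle$ are nonnegative and sum to at most $\|x\|^2$, giving $\|\phi(U)x\|^2 = \sum_k |c_k|^2 \langle E^U(A_k)x,x\rangle \le (\max_k|c_k|)^2\,\|x\|^2$.

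For general essentially bounded $\phi$, I would build $\phi(U)$ through the scalar spectral measures. For $x,y\in\hs$ set $m_{x,y}(A) = \langle E^U(A)x, y\rangle$; each $m_{x,y}$ is a finite complex measure, and $m_{x,x}$ is a positive measure of total mass $\|x\|^2$. Define $\phi(U)$ as the bounded operator associated with the sesquilinear form $(x,y)\mapsto \int_{\sigma(U)}\phi\,dm_{x,y}$, whose boundedness is guaranteed by the estimate $|\int\phi\,dm_{x,y}| \le \|\phi\|_{\infty}\,\|x\|\,\|y\|$, obtained from $|m_{x,y}(A)| \le m_{x,x}(A)^{1/2}\,m_{y,y}(A)^{1/2}$ together with Cauchy-Schwarz, where $\|\phi\|_{\infty}$ denotes the $E^U$-essential supremum. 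This immediately yields (iv) and shows that the definition agrees with the simple-function one on step functions.

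The step I expect to be the main obstacle is transporting the algebraic identities (i) and especially the $*$-homomorphism property (ii) to the limit, since weak-operator convergence does not respect products. To handle this I would upgrade the approximation to convergence in operator norm: an $E^U$-essentially bounded Borel function can be uniformly approximated in the $E^U$-essential-sup norm by simple functions $\phi^{(n)}$, and the simple-function estimate in (iv) applied to the difference gives $\|\phi^{(n)}(U) - \phi(U)\| \le \|\phi^{(n)} - \phi\|_{\infty}$. With norm convergence, products converge to products and adjoints to adjoints, so (i), (ii), and (iii) survive the passage to the limit; alternatively one simply cites \cite[Theorem 10.9]{Bag92} and \cite[Corollaries X.2.8 and X.2.9]{DunSch63}, where this functional calculus is developed in full.
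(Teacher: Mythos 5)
Your proof is correct, but it is worth noting that the paper itself does not prove Lemma \ref{Thm:DSX28} at all: it is presented as a quoted standard result, with the reader directed to \cite[Theorem 10.9]{Bag92} and \cite[Chapter X.2, Corollaries X.2.8 and X.2.9]{DunSch63} for the functional calculus. What you have written is essentially the argument contained in those sources, reproduced in a self-contained way: you correctly isolate the multiplicativity identity $E^U(A)E^U(B)=E^U(A\cap B)$ (which indeed is not explicit in Definition \ref{Defn:pvm} but follows from finite additivity and the orthogonality clause exactly as you describe), verify (i)--(iv) on simple functions, and then pass to general $E^U$-essentially bounded Borel functions via the sesquilinear form $(x,y)\mapsto\int\phi\,dm_{x,y}$ together with the operator-norm estimate $\|\phi(U)\|\le\|\phi\|_\infty$, which is the right device for transporting the $*$-homomorphism identities through the limit (weak convergence alone would not suffice, as you observe). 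The one place where your wording is slightly loose is the claim that the bound on $\|\phi^{(n)}(U)-\phi(U)\|$ comes from ``the simple-function estimate in (iv) applied to the difference'': the difference $\phi^{(n)}-\phi$ is not simple, so the bound must come from the general sesquilinear-form estimate $|\int\psi\,dm_{x,y}|\le\|\psi\|_\infty\|x\|\,\|y\|$ that you establish in the preceding paragraph; since you do establish that estimate for arbitrary essentially bounded $\psi$, this is a matter of phrasing rather than a gap. In short: the paper buys brevity by citation, while your version buys a complete and correct derivation from Definition \ref{Defn:pvm} alone.
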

We note that the converse of \textit{(iv)} is true as well:  if $\phi(U)$ is bounded, then the function $\phi$ is $E^U$\!-essentially bounded.  Finally, Lemma \ref{Thm:DSX28} is also true for normal operators $N$, with $\T$ being replaced by $\C$.

\subsection{Real Borel measures and the operators $\phi(U)$}\label{Subsec:RealBorel}
For each vector $v\in \hs$, there exists a real-valued Borel measure $m_v$ supported on $\mathbb{T}$ such that 
\begin{equation}\label{Eqn:m_v}
m_v(A) = \langle E^U\!(A)v, v \rangle_{\hs},
\end{equation}
where $E^U\!(A)$ is the projection  $\int_{\sigma(U)} \chi_A(z) E^U\!(dz)$.  When $v$ is a unit vector, note that $m_v$ is a probability measure \cite[(2), p.~ 302]{Rud73}.   

\begin{remark} There is also alternative notation for $m_v(A)$ which emphasizes the fact that 
\[
E^U =(E^U)^* = (E^U)^2.\]
We write 
\[
m_v(A) = \langle E^U\!(A)v, E^U\!(A)v\rangle_{\hs} = \|E^{U}\!(A)v\|_{\hs}^2.
\]
\end{remark}

There is an important isometric connection between operators of the form $\phi(U)$ and the measures $m_v$, which we state as the next lemma. 
\begin{lemma}\label{Lemma:Isom}\rm 
\textbf{(Corollary X.2.9, [DS63])}
\it
Suppose $U$ is a unitary operator on the Hilbert space $\hs$ with associated p.v.m.~ $E^U$\!.  Let $m_v$ be the Borel measure on $\hs$ defined in Equation \eqref{Eqn:m_v}.  Suppose $\phi:\T\rightarrow\C$ is an $E^U$\!-essentially bounded, Borel-measurable function.  Then
\begin{equation}\label{Eqn:m_v_Isom}
\| \phi(U)v\|^2_{\hs} = \int_{\sigma(U)} |\phi(z)|^2 \:\rm d\it m_v(z).
\end{equation}
\end{lemma}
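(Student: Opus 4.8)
The plan is to reduce the left-hand norm to a quadratic form in $v$ against the single operator $(|\phi|^2)(U)$, and then recognize that quadratic form as integration against the scalar measure $m_v$. Concretely, I would begin from
\[
\|\phi(U)v\|_{\hs}^2 = \langle \phi(U)v, \phi(U)v\rangle_{\hs} = \langle [\phi(U)]^*\phi(U)v, v\rangle_{\hs},
\]
and then apply the functional calculus of Lemma \ref{Thm:DSX28}: part \textit{(i)} gives $[\phi(U)]^* = \overline{\phi}(U)$, and the multiplicativity in part \textit{(ii)} gives $\overline{\phi}(U)\phi(U) = (\overline{\phi}\phi)(U) = (|\phi|^2)(U)$. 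Combining these yields
\[
\|\phi(U)v\|_{\hs}^2 = \langle (|\phi|^2)(U)\,v, v\rangle_{\hs}.
\]
Since $\phi$ is $E^U$-essentially bounded, the nonnegative real function $|\phi|^2$ is as well, so the functional calculus does apply to it and this step is legitimate.

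The remaining task is to establish, for an arbitrary $E^U$-essentially bounded Borel function $\psi$, the scalar identity $\langle \psi(U)v, v\rangle_{\hs} = \int_{\sigma(U)}\psi\,\dd m_v$, after which I specialize to $\psi = |\phi|^2$. I would prove this by the standard three-step bootstrap from the definition of the spectral integral. First, for an indicator $\psi = \chi_A$ the functional calculus gives $\chi_A(U) = E^U\!(A)$, so $\langle \chi_A(U)v,v\rangle = \langle E^U\!(A)v,v\rangle = m_v(A) = \int \chi_A\,\dd m_v$ directly from the definition \eqref{Eqn:m_v}. Second, by linearity of both $\pi_U$ and the integral, the identity extends to simple functions. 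Third, for a general essentially bounded Borel $\psi$ I would approximate uniformly by simple functions $\psi_n$; the left side converges because the construction of $\pi_U$ furnishes the norm estimate $\|\psi_n(U) - \psi(U)\| \le \|\psi_n - \psi\|_{L^\infty(E^U)} \to 0$, while the right side converges by dominated convergence, using that $m_v$ is a finite (indeed probability, for unit $v$) measure.

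The main obstacle is precisely this last identification step---passing from the operator-valued spectral integral to the scalar Lebesgue--Stieltjes integral against $m_v$. The subtle point is that $m_v$ is a genuine countably additive measure exactly because the p.v.m.\ $E^U$ is: the orthogonality of $\{E^U\!(A_k)\}$ in Definition \ref{Defn:pvm} forces $\langle E^U\!(\cup_k A_k)v,v\rangle = \sum_k \|E^U\!(A_k)v\|^2_{\hs}$, and it is this countable additivity, together with the uniform approximation above, that makes the limit interchange valid. Once the identity $\langle \psi(U)v,v\rangle_{\hs} = \int_{\sigma(U)} \psi\,\dd m_v$ is in hand, substituting $\psi = |\phi|^2$ gives
\[
\|\phi(U)v\|_{\hs}^2 = \langle (|\phi|^2)(U)v, v\rangle_{\hs} = \int_{\sigma(U)} |\phi(z)|^2\,\dd m_v(z),
\]
which is the asserted isometry.
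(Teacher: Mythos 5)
Your proof is correct. Note that the paper itself supplies no proof of this lemma---it is quoted verbatim as Corollary X.2.9 of Dunford--Schwartz---so there is nothing internal to compare against; your argument is the standard one, and the key scalar identity you establish along the way, $\langle \psi(U)v,v\rangle_{\hs} = \int_{\sigma(U)}\psi\,\dd m_v$, is precisely the paper's own (also unproved) Equation \eqref{Eqn:m_v_b}, obtained here legitimately from the indicator--simple--uniform-limit bootstrap together with the functional-calculus properties of Lemma \ref{Thm:DSX28}.
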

\noindent For any $m_v$-integrable function $\phi$ on $\mathbb{T}$, 
\begin{equation}\label{Eqn:m_v_b}
\begin{split}
\int_{\mathbb{T}} \phi(z) \dd m_v(z) 
& = \Bigl\langle \int_{\mathbb{T}} \phi(z) E^U\!(dz) v, v \Bigr\rangle_{\hs}\\
& = \langle \phi(U) v, v \rangle_{\hs}.
\end{split} 
\end{equation}

We noted earlier that $\phi(U)$ is a bounded operator if and only if $\phi$ is $E^U\!$-essentially bounded.  However, $\phi(U)$ can be a well-defined unbounded operator for some unbounded Borel functions $\phi:\T\rightarrow\C$.   In the case that $\phi(U)$ is an unbounded operator, we need to be especially vigilant about the domain of $\phi(U)$.  When $\phi(U)$ is a well-defined unbounded operator, the usual formulas discussed in the bounded case carry over.  By fixing $U$, one obtains an algebra of operators from the Borel functions on $\T$:
\[A_U:=\{ \phi(U) :  \phi \textrm{ a Borel function on } \T\}.\]
 Specifically,  $A_U$ turns into a commutative algebra of (generally unbounded) normal operators, and all the operators in $A_U$ have a common dense domain.  In what follows, we discuss $A_U$ carefully; see also \cite{Ka86, Jo79, Jo80}.
 Specifically, we show that for any Borel function $\phi$, the operator $\phi(U)$ is normal and therefore closed.  Then in Lemma \ref{Thm:Domain}, we show that the domain of $\phi(U)$ is determined by the measures $m_v$.  Once we establish Lemma \ref{Thm:Domain}, the results of Lemmas \ref{Thm:DSX28} and \ref{Lemma:Isom} can be extended to suitable Borel functions $\phi$ and not just $E^U\!$-essentially bounded Borel functions on $\T$.

\subsection{The algebra $A_N$ for a normal operator $N$}

In this section, we work with a normal operator $N$ instead of restricting to the unitary operator $U$.  Although the following result is known, we present an approach via a theorem of Stone \cite[Theorem 9]{Stone}.

\begin{lemma}\label{Thm:Domain}
Suppose $\phi$ is a Borel measurable function on $\T$ and $N$ is a normal operator on the Hilbert space $\hs$.  Let $\phi(N)$ be the operator defined by
\begin{equation}
\phi(N) = \int_{\T} \phi(z) E^N\!(dz).
\end{equation}
Then $\phi(N)$ is a densely defined operator, and
$v\in \rm{dom}\it(\phi(N))$ if and only if  $\phi\in L^2(m_v)$.  In this case, the isometry in \eqref{Eqn:m_v_Isom} holds:
\begin{equation}
\|\phi(N)v\|^2 = \int_{\T} |\phi(z)|^2 \:\rm{d}\it m_v(z).
\end{equation}
\end{lemma}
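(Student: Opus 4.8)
The plan is to establish Lemma \ref{Thm:Domain} via the spectral theorem for normal operators, using the projection-valued measure $E^N$ to reduce everything to a statement about the scalar measures $m_v$. First I would recall the standard truncation device: for each positive integer $k$, let $A_k = \{z \in \T : |\phi(z)| \le k\}$ and set $\phi_k = \phi \cdot \chi_{A_k}$. Each $\phi_k$ is a bounded Borel function, so $\phi_k(N) = \int_\T \phi_k(z)\, E^N(dz)$ is a genuine bounded operator to which Lemma \ref{Lemma:Isom} (via its extension to normal $N$, as noted after that lemma) applies directly. The operator $\phi(N)$ is then defined as the closure of the natural limit of the $\phi_k(N)$, and by the functional-calculus construction preceding the statement, $\phi(N)$ is normal and hence closed.

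The core of the argument is the two-sided equivalence $v \in \mathrm{dom}(\phi(N)) \iff \phi \in L^2(m_v)$. For the direction establishing $L^2(m_v)$-membership is sufficient, I would argue that if $\int_\T |\phi|^2\, dm_v < \infty$, then by Lemma \ref{Lemma:Isom} applied to each truncation,
\begin{equation}
\|\phi_k(N) v\|^2 = \int_\T |\phi_k(z)|^2\, dm_v(z) = \int_{A_k} |\phi(z)|^2\, dm_v(z),
\end{equation}
and similarly $\|\phi_k(N)v - \phi_j(N)v\|^2 = \int_{A_k \triangle A_j} |\phi|^2\, dm_v$ for $j < k$. Since $A_k \uparrow \T$ and $\phi \in L^2(m_v)$, the monotone/dominated convergence theorem forces $\{\phi_k(N)v\}$ to be Cauchy in $\hs$, so it converges; closedness of $\phi(N)$ then places $v$ in the domain with $\phi(N)v = \lim_k \phi_k(N)v$, and passing to the limit in the displayed isometry yields $\|\phi(N)v\|^2 = \int_\T |\phi|^2\, dm_v$. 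For the converse, if $v \in \mathrm{dom}(\phi(N))$ then the partial sums $\phi_k(N)v$ converge, so their norms $\int_{A_k} |\phi|^2\, dm_v$ stay bounded; letting $k \to \infty$ and invoking monotone convergence gives $\int_\T |\phi|^2\, dm_v < \infty$, i.e.\ $\phi \in L^2(m_v)$.

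Density of the domain I would handle separately by exhibiting a dense set of vectors contained in it. The natural choice is $\mathcal{D} = \bigcup_k E^N(A_k)\hs$, the union of ranges of the spectral projections onto the sets where $\phi$ is bounded. For $v$ in the range of $E^N(A_k)$, the measure $m_v$ is supported on $A_k$, where $\phi$ is bounded, so $\phi \in L^2(m_v)$ trivially and $v \in \mathrm{dom}(\phi(N))$. Since $E^N(A_k) \to E^N(\T) = I$ strongly as $k \to \infty$ (using $A_k \uparrow \T$ and countable additivity of the p.v.m.\ from Definition \ref{Defn:pvm}), the set $\mathcal{D}$ is dense in $\hs$, which gives density of the domain.

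The main obstacle I anticipate is not any single estimate but the bookkeeping that links the abstract spectral integral defining $\phi(N)$ to the concrete $L^2(m_v)$ condition uniformly in $v$ — in particular, justifying that the limit $\lim_k \phi_k(N)v$ agrees with the closed operator $\int_\T \phi\, dE^N$ on exactly the set where $\phi \in L^2(m_v)$, rather than on some larger or smaller domain. The cited theorem of Stone is presumably invoked precisely to certify that the closure obtained from the bounded truncations coincides with the spectral-integral definition and carries the isometry to the limit; I would lean on that to avoid re-deriving the closedness and maximality of the domain from scratch. Everything else reduces to the monotone convergence theorem applied to the scalar measures $m_v$, which is routine once the truncation framework is in place.
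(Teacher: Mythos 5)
Your proposal follows essentially the same route as the paper: the same truncation sets $A_k=\{|\phi|\le k\}$ with $\phi_k=\phi\chi_{A_k}$, the same isometry-plus-Cauchy argument in both directions of the equivalence (the paper uses Fatou where you use monotone convergence, an immaterial difference), and the same density argument via the ranges $E^N(A_k)\hs$. The one step you defer to Stone's theorem --- that $\int_\T\phi\,dE^N$ is normal, hence closed, and agrees with the limit of the truncations --- is exactly what the paper carries out explicitly by exhibiting the characteristic matrix with $B=\pi_N\bigl((1+|\phi|^2)^{-1}\bigr)$ and $C=\pi_N\bigl(\overline{\phi}(1+|\phi|^2)^{-1}\bigr)$, so you have correctly located the remaining work and the tool that does it.
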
 
The proof of this lemma is contained in Lemmas \ref{lem_a} through \ref{lem_c} below.

To begin, we review the material from Stone.  For any operator $A$ on a Hilbert space $\hs$, we can refer to the graph of $A$, $G(A)$, as
\[ G(A) = \{ (x,y)\in \hs \oplus\hs:\textrm{ there exists }x \textrm{ such that }y = Ax\}.\] The operator $A$ is called \textbf{closed} when $G(A)$ is closed.  Let $P:\hs\oplus\hs\rightarrow G(A)$ be the self-adjoint, orthogonal projection of $ \hs \oplus\hs$ onto $G(A)$.  Then $P$ has a standard $2\times 2$ operator matrix, called the \textbf{characteristic matrix} of $A$.  We study the elements $P_{i,j}$, $1 \leq i, j\leq 2$ of the characteristic matrix of $P$ for an operator $A$ of the form $\phi(N)$, where $N$ is normal.  Our goal will be to show that $\phi(N)$ is normal using the following theorem of Stone:
\begin{theorem}\label{Stone_Thm}\rm 
\textbf{(Theorem 9, [Sto51], verbatim)}
\it
A two-rowed matrix of bounded linear operators in $\hs$ is the characteristic matrix of a normal operator $A$ if and only if it has the form
\begin{equation}\label{char_mx}
\begin{pmatrix} B & C \\ C^* & I-B \end{pmatrix},
\end{equation}
where $B$ is an invertible self-adjoint operator, and $C$ is a normal operator which commutes with $B$ and satisfies the identity
\[ CC^* = B- B^2.\]  
In terms of this matrix, $A$ and $A^*$ are given by the identities $A= B^{-1}C$, $A^* = B^{-1}C^*$.
\end{theorem}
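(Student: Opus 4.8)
The plan is to realize the characteristic matrix as the orthogonal projection $P$ onto the graph $G(A)=\{(x,Ax):x\in\mathrm{dom}(A)\}\subseteq\hs\oplus\hs$ of a closed, densely defined operator $A$, and to compute its four blocks explicitly using von Neumann's theory of $T:=(I+A^*A)^{-1}$. Recall that for closed densely defined $A$ the operator $I+A^*A$ is self-adjoint with $I+A^*A\ge I$, so $T$ is bounded, positive, injective, self-adjoint with $0\le T\le I$, the operators $AT$ and $A^*(I+AA^*)^{-1}$ extend to bounded operators, and one has the intertwining relation $A(I+A^*A)^{-1}=(I+AA^*)^{-1}A$. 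The strategy is to prove the forward implication by computing $P$ and reading off the algebraic structure, and the converse by starting from a matrix of the prescribed form and reconstructing a normal operator from its range.

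For the forward direction, the foot of the perpendicular from $(x,y)$ onto $G(A)$ is $(u,Au)$ with $u=T(x+A^*y)$, as one sees from the orthogonality condition $\langle x-u,h\rangle+\langle y-Au,Ah\rangle=0$ for all $h\in\mathrm{dom}(A)$. Reading off the blocks and simplifying the lower-right corner with the intertwining relation gives
\[ P=\begin{pmatrix} (I+A^*A)^{-1} & (I+A^*A)^{-1}A^* \\ A(I+A^*A)^{-1} & I-(I+AA^*)^{-1}\end{pmatrix}. \]
Writing $B:=(I+A^*A)^{-1}$ and $C:=(I+A^*A)^{-1}A^*$ (so $C^*=A(I+A^*A)^{-1}$), the matrix has Stone's shape, i.e.\ the lower-right block equals $I-B$, precisely when $(I+A^*A)^{-1}=(I+AA^*)^{-1}$, which holds if and only if $A^*A=AA^*$ — that is, if and only if $A$ is normal. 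When $A$ is normal, $B$ is a bounded spectral function of $A^*A=AA^*$, hence commutes with $A$ and $A^*$; thus $C=BA^*=A^*B$ is normal and commutes with $B$, while $CC^*=TA^*AT=T(T^{-1}-I)T=B-B^2$. The operator $B$ is injective and self-adjoint with (generally unbounded) inverse $I+A^*A$, Stone's notion of invertibility, and $B^{-1}C=A^*$, so $A$ is recovered from the matrix up to the orientation convention interchanging $A\leftrightarrow A^*$ and $C\leftrightarrow C^*$.

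For the converse, starting from $P=\begin{pmatrix}B & C\\ C^* & I-B\end{pmatrix}$ satisfying the three hypotheses, I would first verify that $P$ is an orthogonal projection: self-adjointness is immediate, and the block computation of $P^2$ uses $BC=CB$ (hence $BC^*=C^*B$) together with $CC^*=C^*C=B-B^2$ to yield $P^2=P$. Let $\mathcal{G}:=\mathrm{ran}(P)$, a closed subspace. A vertical vector $(0,y)\in\mathcal{G}$ satisfies $Cy=0$ and $By=0$, and injectivity of $B$ forces $y=0$; hence $\mathcal{G}$ contains no nonzero vertical vector and is the graph of a closed operator $A$, and solving $P(x,Ax)=(x,Ax)$ recovers Stone's formulas $A=B^{-1}C$, $A^*=B^{-1}C^*$ (with Stone's orientation of the graph). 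Density of $\mathrm{dom}(A)$ follows from the symmetric argument applied to $I-P=\begin{pmatrix}I-B & -C\\ -C^* & B\end{pmatrix}$: a horizontal vector $(x_0,0)\in\mathcal{G}^\perp=\mathrm{ran}(I-P)$ satisfies $Bx_0=0$, so $x_0=0$. Finally, since $C$ is normal and commutes with $B$, the formulas for $A$ and $A^*$ give $A^*A$ and $AA^*$ as the same expression $B^{-2}CC^*=B^{-1}-I$ in $B,C,C^*$, so the two agree and $A$ is normal.

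The main obstacle is the unbounded-operator bookkeeping in the converse: the identities $P^2=P$ and the recovery formulas $A=B^{-1}C$, $A^*=B^{-1}C^*$ are formal, but promoting them to the genuine statement that $A$ is a \emph{normal} operator requires controlling domains — establishing that $\mathcal{G}$ is the graph of a \emph{densely defined} operator, that the recovered $B^{-1}C^*$ is the true Hilbert-space adjoint of $A$ rather than merely a formal one, and that $A^*A$ and $AA^*$ coincide as self-adjoint operators \emph{with equal domains}, not merely on a common core. The von Neumann relations for $(I+A^*A)^{-1}$ and the fact that $B$ is a bounded Borel function of $A^*A$ are what make these domain identifications rigorous, and this is exactly where the hypotheses that $C$ is normal and commutes with $B$ are indispensable.
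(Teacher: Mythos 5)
This statement is not proved in the paper at all: it is imported verbatim from Stone [Sto51] (the paper even remarks that the domain characterization \eqref{dom} ``is contained in the proof of Stone's Theorem 9''), so there is no in-paper argument to compare against. Judged on its own, your von Neumann graph-projection strategy is the standard and correct route. The forward direction is essentially complete: the projection onto $G(A)$ does have block form
\begin{equation*}
\begin{pmatrix} (I+A^*A)^{-1} & \overline{(I+A^*A)^{-1}A^*} \\ A(I+A^*A)^{-1} & I-(I+AA^*)^{-1}\end{pmatrix},
\end{equation*}
and the observation that Stone's shape forces $(I+A^*A)^{-1}=(I+AA^*)^{-1}$, hence $A^*A=AA^*$, is exactly the right pivot. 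The apparent interchange $A\leftrightarrow A^*$, $C\leftrightarrow C^*$ that you flag is only Stone's transposed convention for operator matrices on $\hs\oplus\hs$ and is not a mathematical discrepancy. Your converse is a correct plan --- the block verification of $P^2=P$ genuinely uses all three hypotheses ($BC=CB$, normality of $C$ to convert $CC^*=B-B^2$ into $C^*C=B-B^2$, and injectivity of $B$ to rule out vertical vectors and to get density of the domain from $\ker P$) --- but it stops short at the two points you yourself identify. Concretely: (i) to show that $B^{-1}C^*$ is the genuine Hilbert-space adjoint of the reconstructed $A$, the clean device is $G(A^*)=\bigl(VG(A)\bigr)^{\perp}$ with $V(x,y)=(-y,x)$, which shows the characteristic matrix of $A^*$ is the same matrix with $C$ and $C^*$ interchanged; (ii) the identity $A^*A=AA^*=B^{-1}-I$ must be established as an equality of self-adjoint operators with equal domains, which follows once one checks $\mathrm{dom}(A^*A)=\mathrm{dom}(AA^*)=\mathrm{ran}(B)$. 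Neither point is a wrong turn, but as written they are acknowledged gaps rather than completed steps, so the proposal is a sound proof of the forward implication and a correct but unfinished sketch of the converse.
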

In the notation used earlier for the operators in the characteristic matrix, $P_{1,1}= B$, $P_{1,2} = C$, $P_{2,1} = C^*$, and $P_{2,2} = I - B$.    Since the domain of $A$, $\rm{dom}\it(A)$, is the first component in $G(A)$, we can characterize $\textrm{dom}(A)$ by
\[\rm{dom}\it(A)=\{ By_1 + Cy_2 : (y_1, y_2)\in\hs\oplus\hs\}.\]
Another way to characterize $\textrm{dom}(A)$ is
\begin{equation}\label{dom}
 \rm{dom}\it(A)=\{ x\in \hs : Cx \in B\hs\};
 \end{equation}
the proof is contained in the proof of Stone's Theorem 9 \cite[p.~169]{Stone}.

Given a Borel function $\phi$, we can now define candidates for the operators $B$ and $C$ in Theorem \ref{Stone_Thm}, so that the characteristic matrix of $\phi(N)$ is given by \eqref{char_mx}.   Recall the $*$-homomorphism $\pi$ defined in Lemma \ref{Thm:DSX28}, where $\T$ is replaced by $\C$ in the case of the normal operator $N$.  
\begin{lemma}Let $\phi$ be a Borel function on $\C$, and let $N$ be a normal operator on $\hs$.  
Set
\[
B = \pi_N\Biggl(\frac{1}{1 + |\phi|^2}\Biggr) \quad\textrm { and } \quad C = \pi_N\Biggl(\frac{\overline{\phi}}{1 + |\phi|^2}\Biggr).
\]
Then $B$ and $C$ satisfy all the conditions in Theorem \ref{Stone_Thm}, and $\phi(N)$ is a normal operator.
\end{lemma}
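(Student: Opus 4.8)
The plan is to verify directly that the proposed operators $B = \pi_N\bigl(\frac{1}{1+|\phi|^2}\bigr)$ and $C = \pi_N\bigl(\frac{\overline{\phi}}{1+|\phi|^2}\bigr)$ satisfy every hypothesis of Stone's Theorem \ref{Stone_Thm}, and then read off normality of $\phi(N)$ from the conclusion $A = B^{-1}C$. The key point that makes this tractable is that the scalar functions $\frac{1}{1+|\phi|^2}$ and $\frac{\overline{\phi}}{1+|\phi|^2}$ are bounded Borel functions on $\C$ (the first takes values in $(0,1]$ and the second has modulus at most $\frac12$), so $B$ and $C$ are genuinely \emph{bounded} operators and the functional calculus of Lemma \ref{Thm:DSX28} applies to them without any domain subtleties. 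This is the conceptual trick: even when $\phi$ is unbounded, dividing by $1+|\phi|^2$ regularizes everything into the bounded calculus.

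The verification then reduces to checking scalar identities between bounded Borel functions and transporting them through the $*$-homomorphism $\pi_N$. First I would confirm $B$ is self-adjoint and invertible: self-adjointness follows from part (i) of Lemma \ref{Thm:DSX28} since $\frac{1}{1+|\phi|^2}$ is real-valued, and invertibility requires a short argument since the functional calculus only gives boundedness of $B$ directly---I would either note that $\frac{1}{1+|\phi|^2}$ is bounded away from $0$ on each $m_v$-essential support, or more cleanly observe that $B^{-1}$ is the (generally unbounded, but densely defined normal) operator $\pi_N(1+|\phi|^2)$, which is positive and injective, so $B = (1+|\phi|^2)(N)^{-1}$ is a bounded injective operator with dense range, giving the invertibility needed to define $A = B^{-1}C$. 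Next, normality of $C$ and the fact that $C$ commutes with $B$ both follow immediately from part (ii) of Lemma \ref{Thm:DSX28}, since all these operators are images under the commutative $*$-homomorphism $\pi_N$ of functions of the single variable. The central algebraic identity $CC^* = B - B^2$ I would verify at the level of scalar functions:
\begin{equation*}
\frac{\overline{\phi}}{1+|\phi|^2}\cdot\frac{\phi}{1+|\phi|^2}
= \frac{|\phi|^2}{(1+|\phi|^2)^2}
= \frac{1}{1+|\phi|^2} - \frac{1}{(1+|\phi|^2)^2},
\end{equation*}
and then push this through $\pi_N$ using parts (i) and (ii) of Lemma \ref{Thm:DSX28} to obtain $CC^* = B - B^2$ as operators.

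Finally I would check that this $B$ and $C$ really do recover $\phi(N)$, i.e. that $B^{-1}C = \phi(N)$, which on the level of functions is the identity $(1+|\phi|^2)\cdot\frac{\overline{\phi}}{1+|\phi|^2} = \overline{\phi}$---here I must be careful that Stone's $A = B^{-1}C$ with my $C$ built from $\overline{\phi}$ produces $\phi(N)$ rather than $\overline{\phi}(N)$, so I would fix the bookkeeping of conjugates so that the matrix $\begin{pmatrix} B & C \\ C^* & I-B\end{pmatrix}$ is genuinely the characteristic matrix of $\phi(N)$ (equivalently, confirm its first-column range condition \eqref{dom} matches $\mathrm{dom}(\phi(N))$). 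The main obstacle I anticipate is precisely this last consistency check together with the invertibility of $B$: one must argue that $B^{-1}$ is defined on the correct dense domain and that composing it with $C$ reproduces $\phi(N)$ exactly, including its domain from Lemma \ref{Thm:Domain}, rather than some restriction or extension. Once normality of the characteristic matrix is confirmed via Stone's theorem, the identities $A = B^{-1}C$ and $A^* = B^{-1}C^*$ give both $\phi(N)$ and its adjoint explicitly, and closedness follows since normal operators are automatically closed.
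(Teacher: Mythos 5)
Your proposal follows essentially the same route as the paper: it defines the identical bounded operators $B$ and $C$, verifies Stone's hypotheses through the bounded functional calculus of Lemma \ref{Thm:DSX28} (self-adjointness and injectivity of $B$, normality of $C$ and its commuting with $B$, and the scalar identity yielding $CC^*=B-B^2$), and then invokes Theorem \ref{Stone_Thm} to conclude that $\phi(N)$ is normal. If anything you are more careful than the paper, which dismisses the $CC^*=B-B^2$ computation as ``easy to check'' and never explicitly addresses the conjugate bookkeeping you rightly flag, namely whether $B^{-1}C$ recovers $\phi(N)$ or $\overline{\phi}(N)$ (a harmless ambiguity here, since an operator is normal if and only if its adjoint is).
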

\begin{proof}  Since $(1 + |\phi|^2)^{-1}$ and $\overline{\phi}(1 + |\phi|^2)^{-1}$ are bounded Borel functions, we can apply Lemma \ref{Thm:DSX28}.  By definition, $B$ is self-adjoint (i).  The operators $B$ and $C$ commute (ii); $C$ and $C^*$ also commute, so $C$ is normal (ii).  It is easy to check that the equation $CC^* = B - B^2$ holds.  Finally, $B$ is one-to-one, so $B$ is invertible.  Therefore $B$ and $C$ satisfy all the conditions of Theorem \ref{Stone_Thm}, and $\phi(N)$ is a normal operator. 
\end{proof} 

\begin{lemma}
If $\phi$ is a Borel function on $\C$, and $N$ is a normal operator on $\hs$, then $\phi(N)$ is closed.
\end{lemma}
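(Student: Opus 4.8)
The plan is to show that $\phi(N)$ is closed as a direct consequence of the previous lemma, which established that $\phi(N)$ is normal. The key fact I would invoke is that every normal operator is closed; more fundamentally, every \emph{self-adjoint} operator is closed, and the characteristic-matrix approach via Stone's Theorem \ref{Stone_Thm} builds closedness in from the start. Indeed, the operator $\phi(N)$ was realized as $A = B^{-1}C$ where the $2\times 2$ matrix $\left(\begin{smallmatrix} B & C \\ C^* & I-B\end{smallmatrix}\right)$ is precisely the characteristic matrix of $A$, i.e.\ the matrix of the orthogonal projection $P$ of $\hs\oplus\hs$ onto the graph $G(A)$.

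First I would recall that in Stone's framework, saying that a matrix of bounded operators \emph{is} the characteristic matrix of an operator $A$ means exactly that the range of the corresponding projection $P$ equals the graph $G(A)$. Since $P$ is an orthogonal projection onto a closed subspace, its range is automatically closed in $\hs\oplus\hs$. Therefore $G(A) = \operatorname{ran}(P)$ is a closed subspace of $\hs\oplus\hs$, and by the very definition of a closed operator given earlier in the excerpt, $A = \phi(N)$ is closed.

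Alternatively, and perhaps more transparently for the reader, I would argue directly from normality: in the previous lemma we verified the hypotheses of Theorem \ref{Stone_Thm}, so $\phi(N)$ is a normal operator, and Theorem \ref{Stone_Thm} itself characterizes the characteristic matrices of \emph{normal} operators, which are in particular operators whose graphs arise as ranges of orthogonal projections. Either route reduces the statement to the observation that the graph is the range of a self-adjoint projection and hence closed. I would present whichever is cleaner, most likely the direct graph-closedness argument, in one or two sentences.

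I do not anticipate a genuine obstacle here, since the hard work was already done in constructing $B$ and $C$ and verifying Stone's conditions. The only point requiring a small amount of care is making explicit the logical link ``$P$ is an orthogonal projection $\Rightarrow \operatorname{ran}(P)=G(\phi(N))$ is closed $\Rightarrow \phi(N)$ is closed,'' so that the lemma does not merely assert the conclusion but ties it back cleanly to the characteristic-matrix setup and to the definition of closedness stated in the text. Care should also be taken that this closedness claim is stated for \emph{all} Borel functions $\phi$, including unbounded ones, which is exactly what the characteristic-matrix construction delivers, since $B$ and $C$ are bounded regardless of whether $\phi$ is.
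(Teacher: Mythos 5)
Your proposal is correct, but it takes a genuinely different route from the paper. The paper does not argue abstractly from the characteristic matrix being a projection; instead it gives a hands-on sequential verification: take $x_n\in\mathrm{dom}(\phi(N))$ with $x_n\to x$ and $\phi(N)x_n = y_n\to y$, note that membership in the domain means $Cx_n=By_n$ (by the characterization $\mathrm{dom}(A)=\{x : Cx\in B\hs\}$ and $A=B^{-1}C$), and then pass to the limit using only the \emph{boundedness} of $B$ and $C$ to get $Cx=By$, whence $x\in\mathrm{dom}(\phi(N))$ and $\phi(N)x=B^{-1}Cx=y$. Your argument instead observes that the graph is the range of the orthogonal projection $P$ and ranges of projections are closed (equivalently, that normal operators in Stone's sense are closed by construction). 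Both are valid, and yours is shorter and more conceptual; what the paper's version buys is that it sidesteps a mild circularity you should be aware of. In Stone's framework the characteristic matrix of an operator is defined via the projection onto the \emph{closure} of the graph, and the paper's own definition of $P$ as ``the orthogonal projection onto $G(A)$'' already presupposes $G(A)$ closed. So the abstract route really proves that the operator $B^{-1}C$ with domain $\{x: Cx\in B\hs\}$ is closed, and the assertion $\operatorname{ran}(P)=G(\phi(N))$ (rather than $\overline{G(\phi(N))}$) is precisely the content that the paper's limit computation verifies concretely. Since the preceding lemma already identifies $\phi(N)$ with $B^{-1}C$, your argument goes through, but if you present it you should make that identification explicit rather than treating closedness as following from the word ``characteristic matrix'' alone.
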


\begin{proof}
Recall from Equation \eqref{dom} that the domain of $\phi(N)$ can be characterized in terms of the bounded operators $B$ and $C$.  Suppose $\{x_n\}$ is a sequence belonging to $\textrm{dom}(\phi(N))$ with limit $x\in\hs$.  Suppose that $\phi(N)x_n\rightarrow y\in\hs$.  We need to show that $\phi(N)x = y$.

Since $x_n\in\textrm{dom}(\phi(N))$, there exists $y_n\in\hs$ such that $Cx_n = By_n$ by \eqref{dom}.   In other words, by the last sentence of Theorem \ref{Stone_Thm}, $B^{-1}Cx_n = \phi(N)x_n = y_n$.  But $\phi(N)x_n\rightarrow y$ by hypothesis.  Therefore $y_n\rightarrow y$.  Since $B$ and $C$ are bounded, we know that $Cx_n = By_n$ for all $n$ implies that $Cx = By$.  Therefore $y = B^{-1}Cx = \phi(N)x$, and $\phi(N)$ is closed.
\end{proof}

For each $j\in\N$, define $A_j$ to be the pullback
\[ 
A_j = \{ z\in \C \:|\: |\phi(z)| \leq j\}.
\]
For each $j\in\N$, $\chi_{(A_j)}(z)$ is a bounded function on $\C$, and by Lemma \ref{Thm:DSX28},
\[
E^N\!(A_j) = \chi_{A_j}(N)
\]
is a bounded operator.  Set 
\begin{equation}\label{Eqn:v_j}
v_j  = E^N\!(A_j)v.
\end{equation}

The sequence 
\begin{equation}\label{Eqn:PW}
\{
\chi_{(A_j)}\phi
\}_{j=1}^{\infty}
\end{equation} 
converges pointwise to $\phi$ on $\C$ because $\C = \cup_{j=1}^{\infty} A_j$.

\begin{lemma}\label{lem_a}
Let $N$ be a normal operator on $\hs$, and let $\phi$ be a Borel function on $\C$.  Then the operator $\phi(N)$ is densely defined.
\end{lemma}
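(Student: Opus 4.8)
The plan is to exhibit, for an arbitrary $v\in\hs$, a sequence of vectors in $\mathrm{dom}(\phi(N))$ converging to $v$; the natural candidates are the truncations $v_j = E^N\!(A_j)v$ from \eqref{Eqn:v_j}. Two things then need checking: that each $v_j$ lies in $\mathrm{dom}(\phi(N))$, and that $v_j\to v$. Since $v$ is arbitrary, these together give density.

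For the convergence $v_j\to v$ I would use only the countable additivity built into Definition \ref{Defn:pvm}. Because $A_1\subseteq A_2\subseteq\cdots$ with $\bigcup_j A_j = \C$, the projections $E^N\!(A_j)$ form an increasing sequence whose strong limit is $E^N\!(\C)=I$: writing $\C$ as the disjoint union of the sets $A_j\setminus A_{j-1}$ and applying countable additivity, the partial sums $\sum_{k\le j}E^N\!(A_k\setminus A_{k-1}) = E^N\!(A_j)$ converge strongly to $E^N\!(\C)=I$. Hence $v_j = E^N\!(A_j)v\to v$.

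The key step is to verify $v_j\in\mathrm{dom}(\phi(N))$ without invoking Lemma \ref{Thm:Domain}, which this lemma is a step toward proving; instead I would use the characteristic-matrix description \eqref{dom}, namely $\mathrm{dom}(\phi(N)) = \{x : Cx\in B\hs\}$, with $B = \pi_N\big((1+|\phi|^2)^{-1}\big)$ and $C = \pi_N\big(\overline{\phi}(1+|\phi|^2)^{-1}\big)$. Since $\overline{\phi}\,\chi_{A_j}$ is a bounded Borel function, bounded by $j$ on $A_j$ and zero elsewhere, the vector $w_j := \pi_N(\overline{\phi}\,\chi_{A_j})v$ is well defined. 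Using the multiplicativity of the bounded functional calculus from Lemma \ref{Thm:DSX28}(ii) — valid on $\C$ for the normal operator $N$ — both $Bw_j$ and $Cv_j = C\,E^N\!(A_j)v$ reduce to $\pi_N\big(\overline{\phi}\,\chi_{A_j}(1+|\phi|^2)^{-1}\big)v$. Thus $Cv_j = Bw_j\in B\hs$, so $v_j\in\mathrm{dom}(\phi(N))$ by \eqref{dom}.

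The main obstacle here is bookkeeping rather than conceptual: one must be careful to rely only on the bounded functional calculus and the already-established characteristic-matrix formula \eqref{dom}, since the full domain characterization of Lemma \ref{Thm:Domain} is not yet available at this stage. The identity $Cv_j = Bw_j$ is precisely the computation that circumvents this circularity, and once it is in hand the density of $\mathrm{dom}(\phi(N))$ follows immediately from $v_j\to v$.
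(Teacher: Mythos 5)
Your proposal is correct and takes essentially the same approach as the paper: both arguments use the truncations $v_j = E^N\!(A_j)v$ together with the strong convergence $E^N\!(A_j)\to I$ coming from countable additivity of the p.v.m. The only difference is that the paper simply asserts $E^N\!(A_j)x\in\mathrm{dom}(\phi(N))$ without comment, whereas you verify it via the characteristic-matrix description \eqref{dom} and the identity $Cv_j = Bw_j$ — a detail the paper leaves implicit, supplied correctly and non-circularly.
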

\begin{proof}
For each $x\in\hs$ and each $j\in\N$, the vector $E^{N}\!(A_j)x = \chi_{A_j}(N)x\in \textrm{dom}(\phi(N))$.  Furthermore, the set
\[
\{ E^{N}\!(A_j)x \: | \: x\in\hs \textrm{ and } j\in\N\}
\]
is dense in $\hs$ because the projections $E^{N}\!(A_j)$ tend to the identity (each $x\in\hs$ is the limit of the sequence $\{E^{N}\!(A_j)x\}$).  Therefore $\phi(N)$ is densely defined.
\end{proof}

\begin{lemma}
Let $N$ be a normal operator on $\hs$.  Suppose $v\in \rm{dom}\it(\phi(N))$.  Then $\phi\in L^2(m_v)$, where $m_v$ is the measure defined in Equation \eqref{Eqn:m_v}.
\end{lemma}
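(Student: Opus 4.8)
The plan is to exploit the truncation sets $A_j = \{z : |\phi(z)|\le j\}$ and the vectors $E^N\!(A_j)v$ introduced above, together with the isometry of Lemma \ref{Lemma:Isom}, reducing the claim to a single uniform bound followed by monotone convergence. Since $\C = \cup_{j} A_j$ and $|\phi|^2 \ge 0$, the monotone convergence theorem gives
\[
\int_\C |\phi(z)|^2 \dd m_v(z) = \lim_{j\to\infty} \int_{A_j} |\phi(z)|^2 \dd m_v(z),
\]
so it suffices to bound the right-hand integrals uniformly in $j$ by a finite constant. I expect that constant to be $\|\phi(N)v\|^2_{\hs}$, which is finite precisely because $v \in \textrm{dom}(\phi(N))$.

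For the uniform bound I would first note that each $\chi_{A_j}\phi$ is a bounded Borel function, so $(\chi_{A_j}\phi)(N)$ is a bounded operator and Lemma \ref{Lemma:Isom} applies:
\[
\|(\chi_{A_j}\phi)(N)v\|^2_{\hs} = \int_\C |\chi_{A_j}(z)\phi(z)|^2 \dd m_v(z) = \int_{A_j} |\phi(z)|^2 \dd m_v(z).
\]
Next, since $E^N\!(A_j)v = \chi_{A_j}(N)v$ lies in $\textrm{dom}(\phi(N))$ (as in the proof of Lemma \ref{lem_a}) and $\chi_{A_j}$ is bounded, the multiplicative rule of the functional calculus gives $(\chi_{A_j}\phi)(N)v = \phi(N)E^N\!(A_j)v$, and the commutation of the spectral projection $E^N\!(A_j)$ with $\phi(N)$ on $\textrm{dom}(\phi(N))$ rewrites this as $E^N\!(A_j)\phi(N)v$. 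Because $E^N\!(A_j)$ is an orthogonal projection, $\|E^N\!(A_j)\phi(N)v\|_{\hs}\le \|\phi(N)v\|_{\hs}$, whence
\[
\int_{A_j} |\phi(z)|^2 \dd m_v(z) = \|E^N\!(A_j)\phi(N)v\|^2_{\hs} \le \|\phi(N)v\|^2_{\hs}
\]
for every $j$. Combining this with the previous display yields $\int_\C |\phi|^2 \dd m_v \le \|\phi(N)v\|^2_{\hs} < \infty$, which is exactly the assertion $\phi \in L^2(m_v)$.

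The step I expect to require the most care is the identity $\phi(N)E^N\!(A_j)v = E^N\!(A_j)\phi(N)v$. Since $\phi(N)$ is generally unbounded, one must verify both that $v \in \textrm{dom}(\phi(N))$ forces $E^N\!(A_j)v \in \textrm{dom}(\phi(N))$ and that the multiplicative and commutation rules of Lemma \ref{Thm:DSX28} — stated there only for $E^N$-essentially bounded symbols — extend to the product of the bounded symbol $\chi_{A_j}$ with the unbounded symbol $\phi$ when evaluated at $v$. I would justify this by approximating $\phi$ by its bounded truncations $\chi_{A_k}\phi$, applying Lemma \ref{Thm:DSX28} at each finite stage, and passing to the limit; this is the one place where the unboundedness of $\phi(N)$ must be handled rather than treated formally, but everything else is a direct consequence of the isometry and the projection property.
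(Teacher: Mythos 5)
Your proof is correct and follows essentially the same route as the paper's: the same truncation sets $A_j$, the same isometry applied to the bounded functions $\chi_{A_j}\phi$, and a convergence theorem to pass to the limit; indeed, the paper explicitly offers your version (taking the supremum of $\int_{A_j}|\phi|^2\,dm_v$) as an alternate to its Fatou-plus-closedness argument. If anything, your write-up is more careful, since the identity $\phi(N)E^N\!(A_j)v = E^N\!(A_j)\phi(N)v$ that yields the uniform bound is exactly the point the paper glosses over when it asserts that closedness alone makes $\{\phi(U)v_j\}$ converge.
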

\begin{proof} We show that if $v\in \textrm{dom}(\phi(N))$, then $\phi\in L^2(m_v)$. Because $v\in \textrm{dom}(\phi(N))$ and $\phi(N)$ is closed, the sequence $\{\phi(U)v_j\}_{j=1}^{\infty}$ where $v_j$ is defined in \eqref{Eqn:v_j} converges to an element $w\in\hs$.   Since $\chi_{A_j}\phi$ converges pointwise to $\phi$, apply Fatou's Lemma \cite[Theorem 1.28, p. 23]{Rud87}: 
\begin{equation}
\begin{split}
\int  |\phi|^2  \dd m_v
& = \int \liminf_{j\rightarrow\infty} |\chi_{A_j}\phi|^2 \dd m_v\\
& \leq\liminf_{j\rightarrow\infty} \int  |\chi_{A_j}\phi|^2 \dd m_v\\
& = \liminf_{j\rightarrow\infty}\: \langle \overline{\chi_{A_j}(U)\phi(U)} \chi_{A_j}(U)\phi(U)v, v\rangle_{\hs}\\
& = \liminf_{j\rightarrow\infty}\:\|\phi(U) v_j\|^2_{\hs}\\
& = \lim_{j\rightarrow\infty} \|\phi(U) v_j\|^2_{\hs} = \|w\|^2_{\hs} < \infty.
\end{split}
\end{equation}
Therefore $\phi\in L^2(m_v)$.

Alternately, one could take the supremum over the integrals 
\[
\int_{A_j} |\phi|^2 \dd m_v
\]
to establish that $\phi\in L^2(m_v)$.
\end{proof}

\begin{lemma}\label{lem_c}
Let $N$ be a normal operator on $\hs$.  Suppose $\phi\in L^2(m_v)$, where $m_v$ is the measure defined in Equation \eqref{Eqn:m_v}.  Then $v\in \rm{dom}\it(\phi(N))$. 
\end{lemma}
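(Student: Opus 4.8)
The plan is to exploit the fact—already established above—that $\phi(N)$ is a closed operator, together with the approximating vectors $v_j = E^N\!(A_j)v$ introduced in \eqref{Eqn:v_j}. Since each $v_j$ lies in $\textrm{dom}(\phi(N))$ (this is exactly what is verified in the proof of Lemma \ref{lem_a}), it suffices to show that $v_j \to v$ and that $\{\phi(N)v_j\}_{j=1}^{\infty}$ converges in $\hs$; the closedness of $\phi(N)$ then forces $v \in \textrm{dom}(\phi(N))$, and in fact $\phi(N)v = \lim_j \phi(N)v_j$.

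The convergence $v_j \to v$ is immediate: the sets $A_j$ increase to all of $\C$, so the projections $E^N\!(A_j)$ increase strongly to $E^N\!(\C) = I$, whence $v_j = E^N\!(A_j)v \to v$. For the sequence $\{\phi(N)v_j\}$, the key point is that on each $A_j$ the function $\phi$ is bounded by $j$, so $\chi_{A_j}\phi$ is a bounded Borel function and $\phi(N)v_j = (\chi_{A_j}\phi)(N)v$. Applying the isometry of Lemma \ref{Lemma:Isom} (valid for the normal operator $N$, with $\C$ in place of $\T$) to the bounded function $\chi_{A_k\setminus A_j}\phi$ gives, for $j<k$,
\[
\|\phi(N)v_k - \phi(N)v_j\|_{\hs}^2 = \int_{A_k\setminus A_j} |\phi|^2 \dd m_v \leq \int_{\C\setminus A_j} |\phi|^2 \dd m_v .
\]
Because $\phi\in L^2(m_v)$ by hypothesis, the right-hand side is the tail of a convergent integral and tends to $0$ as $j\to\infty$; hence $\{\phi(N)v_j\}$ is Cauchy and converges to some $w\in\hs$. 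Combining $v_j\to v$, $\phi(N)v_j\to w$, and the closedness of $\phi(N)$ yields $v\in\textrm{dom}(\phi(N))$. The isometry formula stated in the lemma then follows by letting $j\to\infty$ in $\|\phi(N)v_j\|^2 = \int_{A_j}|\phi|^2\dd m_v$ and invoking monotone convergence on the right.

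I expect the main obstacle to be the bookkeeping that legitimizes the identity $\phi(N)v_j = (\chi_{A_j}\phi)(N)v$ and the associated isometry when $\phi$ itself is unbounded: the multiplicativity of the functional calculus in Lemma \ref{Thm:DSX28} and the isometry of Lemma \ref{Lemma:Isom} are stated only for essentially bounded symbols, so one must run them through the truncations $\chi_{A_j}\phi$ and then pass to the limit. This is precisely the step where the hypothesis $\phi\in L^2(m_v)$ is consumed, converting an a priori merely norm-bounded sequence $\{\phi(N)v_j\}$ into a genuinely Cauchy one via the finiteness of $\int_{\C}|\phi|^2\dd m_v$. Together with the preceding lemma, this establishes both directions of the domain characterization and hence proves Lemma \ref{Thm:Domain}.
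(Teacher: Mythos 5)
Your proposal is correct and follows essentially the same route as the paper: truncate via $v_j = E^N\!(A_j)v$, show $\{\phi(N)v_j\}$ is Cauchy by identifying $\|\phi(N)v_k - \phi(N)v_j\|_{\hs}^2$ with the $L^2(m_v)$-distance between the bounded truncations $\chi_{A_k}\phi$ and $\chi_{A_j}\phi$ (which tends to $0$ because $\phi\in L^2(m_v)$), and then invoke the closedness of $\phi(N)$. Your tail estimate $\int_{\C\setminus A_j}|\phi|^2\dd m_v \to 0$ is just a slightly more explicit version of the paper's appeal to $L^2(m_v)$-convergence of $\chi_{A_j}\phi$ to $\phi$.
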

\begin{proof}
We show that if $\phi\in L^2(m_v)$, then $v\in \textrm{dom}(\phi(N))$.  
To establish that the vector $v$ belongs to the domain of $\phi(N)$, we can show that the sequence $(v_j, \phi(N)v_j)\subseteq \hs\oplus \hs$ has a limit in $\hs\oplus \hs$, where $\{v_j\}$ is defined in \eqref{Eqn:v_j}.  We know that $v_j\rightarrow v$, so we need to consider the second component.  Since $\phi\in L^2(m_v)$, the pointwise limit in \eqref{Eqn:PW} is a limit in $L^2(m_v)$ as well.  In other words, the sequence $\{ \chi_{A_j} \phi\}$ is Cauchy:
\[
\lim_{j, k\rightarrow\infty} \| \chi_{A_j} \phi - \chi_{A_k}\phi \|^2_{L^2(m_v)} =0.
\]
But $\chi_{A_j} \phi - \chi_{A_k}\phi$ is a bounded function on $\C$, so we can apply Lemma \ref{Thm:DSX28} (iii):
\begin{equation}
\begin{split}
& \| \chi_{A_j} \phi - \chi_{A_k}\phi \|^2_{L^2(m_v)} \\
& = \|  \chi_{A_j}(N) \phi(N)v - \chi_{A_k}(N)\phi(N)v \|^2_{\hs}\\
& = \| \phi(N)v_j - \phi(N)v_k\|^2_{\hs}
\end{split}
\end{equation}
which implies that $\{\phi(N)v_j\}$ is Cauchy in $\hs$.
\end{proof}

\section{The unitary operator $U$}\label{Sec:Measures}

In this section we introduce the unitary operator $U$ in $\hs = L^2(\mu_{\frac14})$ defined from the 5-scaled ONB $5\Gamma$ mentioned in Section \ref{Sec:Intro}.  We prove that $U$ has a number of intriguing affine self-similarity properties.  Its cyclic subspaces are studied in Theorems \ref{Thm:Hv=Kv} and \ref{Prop:R_v^*}.  A key component in the proof of our main result, Theorem \ref{Thm:U_Ergodic}, is the correspondence between the $U$-cyclic subspaces of $L^2(\mu_{\frac14})$ and the Hilbert spaces $L^2(m_v)$ with respect to the scalar measures generated by $U$.

\subsection{Cyclic subspaces associated with $U$}\label{Subsec:Cyclic} 
The definition of a cyclic subspace $H(v)$ given in Definition \ref{Defn:Cyc_Subsp} is taken from Nelson, who uses the same definition with positive powers of $U$ only \cite[p. 68-69]{Nel69}.  This is because Nelson is working with self-adjoint operators.  Because the operator $U$ defined in Equation (\ref{eqn:U}) is not self-adjoint, we add $U^*$-invariance to Definition \ref{Defn:Cyc_Subsp}. 
\begin{definition}\label{Defn:Cyc_Subsp}
Let $v\in L^2(\mu_{\frac{1}{4}})$.  Define $H(v)$ to be the smallest closed subspace of $L^2(\mu_{\frac{1}{4}})$ which contains $v$ and is invariant under both $U$ and $U^*=U^{-1}$.  We call $H(v)$ the $\mathbf{U}$-\textbf{cyclic subspace} for $v$.\end{definition}

\noindent A slightly more useful version of Definition \ref{Defn:Cyc_Subsp} is the following:

\medskip

\noindent \textbf{Definition \ref{Defn:Cyc_Subsp}, restated.   } Let $v\in L^2(\mu_{\frac{1}{4}})$.  The \textbf{U-cyclic subspace} $H(v)$ is the intersection of all subspaces $K\subset L^2(\mu_{\frac14})$ such that 
\begin{enumerate}[(a)]
\item $K$ is closed
\item $v\in K$
\item $UK\subset K$
\item $U^*K\subset K$.
\end{enumerate}

\medskip

There are other ways to describe the $U$-cyclic subspace $H(v)$.  A simple one is described in Lemma \ref{Lemma:H(v)_and_U^k}.
\begin{lemma}\label{Lemma:H(v)_and_U^k}
Let $v\in L^2(\mu_{\frac{1}{4}})$ with $\|v\|=1$.  Then
\begin{equation}
H(v)  = \overline{\rm{span}}_{L^2(\mu_{\frac{1}{4}})}\it\{U^k v \: |\: k \in \mathbb{Z} \}.
\end{equation}
In other words, if $\phi(z)$ is the polynomial
\begin{equation}\label{Eqn:PhiPoly}
\phi(z) = \sum_{k=-N}^N c_k z^k
\end{equation}
where $c_k\in\C$, $k = -N, \ldots, N$ and $z\in \T$, then the vectors $\phi(U)v$ are dense in $H(v)$.
\end{lemma}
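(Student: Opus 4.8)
The plan is to prove the set equality $H(v) = W$, where I write $W := \overline{\mathrm{span}}_{L^2(\mu_{\frac14})}\{U^k v : k \in \mathbb{Z}\}$, by establishing the two inclusions separately and using the characterization of $H(v)$ from Definition \ref{Defn:Cyc_Subsp} (restated) as the smallest closed subspace containing $v$ that is invariant under both $U$ and $U^*$. This is essentially the standard fact identifying a cyclic subspace with the closed span of the orbit, and the only adaptation needed here is the inclusion of negative powers, which is available to us because $U^* = U^{-1}$.

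For the inclusion $W \subseteq H(v)$, I would first show by induction on $|k|$ that $U^k v \in H(v)$ for every $k \in \mathbb{Z}$. The base case $U^0 v = v \in H(v)$ is condition (b); for the inductive step, condition (c) gives $U^{k+1}v = U(U^k v) \in H(v)$ for increasing positive $k$, and condition (d) together with $U^* = U^{-1}$ gives $U^{k-1}v = U^*(U^k v) \in H(v)$ for the negative direction. Since $H(v)$ is a linear subspace it contains every finite linear combination of the $U^k v$, and since it is closed (condition (a)) it contains the closure of their span, namely $W$.

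For the reverse inclusion $H(v) \subseteq W$, I would verify that $W$ itself satisfies conditions (a)--(d), so that $W$ is one of the subspaces appearing in the intersection defining $H(v)$, forcing $H(v) \subseteq W$. Closedness (a) is immediate from the definition of $W$ as a closed span, and $v = U^0 v \in W$ gives (b). For (c) and (d), observe that $U$ sends each generator $U^k v$ to $U^{k+1}v$ and that $U^* = U^{-1}$ sends $U^k v$ to $U^{k-1}v$; hence both operators map the dense subspace $\mathrm{span}\{U^k v : k \in \mathbb{Z}\}$ into itself. Because $U$ and $U^*$ are bounded (being unitary) they are continuous, so each carries the closure $W$ into $W$ as well. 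This yields $W = H(v)$.

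The concluding ``in other words'' assertion is then just a reformulation: for a Laurent polynomial $\phi(z) = \sum_{k=-N}^N c_k z^k$, the identity $\overline{z} = z^{-1}$ valid on $\T$ shows that the functional-calculus operator $\phi(U)$ of Lemma \ref{Thm:DSX28} coincides with $\sum_{k=-N}^N c_k U^k$ (using $U^{-1} = U^*$). Thus the vectors $\phi(U)v$ range over exactly $\mathrm{span}\{U^k v : k \in \mathbb{Z}\}$, whose closure is $W = H(v)$, and density follows. The argument is formal and I do not anticipate a genuine obstacle; the one point that deserves care is the passage from invariance of the dense span to invariance of its closure, which is precisely where the boundedness of the unitary operators $U$ and $U^*$ keeps the proof elementary, in contrast to the unbounded functional calculus developed in Section \ref{Sec:SpecThm}.
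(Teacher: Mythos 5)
Your proposal is correct and follows essentially the same two-inclusion argument as the paper: induction plus closedness gives $\overline{\mathrm{span}}\{U^k v : k\in\mathbb{Z}\}\subseteq H(v)$, and minimality of $H(v)$ among closed $U$- and $U^*$-invariant subspaces containing $v$ gives the reverse inclusion. Your explicit justification that invariance of the dense span passes to its closure via continuity of the unitaries is a small extra step the paper leaves implicit, but it is not a different approach.
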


\noindent\textbf{Proof:  }Since $H(v)$ contains $v$, and since $H(v)$ is invariant under $U$, we have
\[ Uv \in U(H(v)) = H(v).\]
By induction, all vectors of the form $U^nv$ where $n\in\N$ belong to $H(v)$.  Similarly,  $U^*v\in H(v)$, so all vectors of the form $U^{-n}v$ where $n\in\N$ also belong to $H(v)$.  Since $H(v)$ is closed, the subspace  $\overline{\rm{span}}\it\{U^k \: |\: k \in \mathbb{Z} \}$ is contained in $H(v)$.  However, $\overline{\rm{span}}\it\{U^k v\: |\: k \in \mathbb{Z} \}$ contains $v$ and is invariant under both $U$ and $U^*$, and $\overline{\rm{span}}\it\{U^k v \: |\: k \in \mathbb{Z} \}$ cannot be a proper subspace of $H(v)$ since $H(v)$ is the smallest $U$,$U^*$-invariant subspace containing $v$.

We note that since functions $\phi$ as in Equation (\ref{Eqn:PhiPoly}) are continuous on $\T$, the functions $\phi$ are bounded and therefore define bounded operators on $L^2(\mu_{\frac{1}{4}})$, so in particular $\phi(U)v$ is defined as in Lemma \ref{Thm:DSX28}.\hfill$\Box$

The characterizations of $H(v)$ given so far do not involve a measure.  We will establish a different characterization of $H(v)$ which directly connects $H(v)$ to the space $L^2(m_v)$.

\begin{definition}\label{Defn:Kv} Suppose $v\in L^2(\mu_{\frac{1}{4}})$ with $\|v\|=1$.  Define $K(v)\subset L^2(\mu_{\frac{1}{4}})$ as follows:
\begin{equation}\label{Eqn:Kv}
K(v) := \{ \phi(U)v \:|\: \phi\in L^2(m_v)\}.
\end{equation}
\end{definition}
Recall that $\phi\in L^2(m_v)$ if and only if $v$ belongs to the domain of $\phi(U)$ by Lemma \ref{Thm:Domain}. 

\begin{theorem}\label{Thm:Hv=Kv} 
Given $v \in L^2(\mu_{\frac14})$ with $\|v\|=1$, there is an isometric isomorphism between the cyclic subspace $H(v)$ and the Hilbert space $L^2(m_v)$.
\end{theorem}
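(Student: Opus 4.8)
Given $v \in L^2(\mu_{\frac14})$ with $\|v\|=1$, there is an isometric isomorphism between the cyclic subspace $H(v)$ and the Hilbert space $L^2(m_v)$.

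The plan is to construct the isomorphism explicitly as the map $R_v : L^2(m_v) \to H(v)$ defined by $R_v(\phi) = \phi(U)v$, and then to verify that it is a well-defined isometry onto $H(v)$.

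First I would check that $R_v$ lands in $H(v)$ and is an isometry. The isometry is essentially handed to us: by Lemma \ref{Lemma:Isom} (extended to general Borel $\phi$ via Lemma \ref{Thm:Domain}), we have $\|\phi(U)v\|_{\hs}^2 = \int_{\sigma(U)} |\phi(z)|^2 \dd m_v(z) = \|\phi\|_{L^2(m_v)}^2$, which is exactly the statement that $R_v$ preserves norms. Polarization then gives preservation of inner products, so $R_v$ is a linear isometry; in particular it is injective and well-defined on all of $L^2(m_v)$ (every $\phi\in L^2(m_v)$ puts $v$ in $\operatorname{dom}(\phi(U))$ by Lemma \ref{Thm:Domain}, so $\phi(U)v$ makes sense). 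To see that $R_v(\phi)\in H(v)$, I would first handle the dense subclass of Laurent polynomials $\phi(z)=\sum_{k=-N}^N c_k z^k$: for such $\phi$, $\phi(U)v = \sum c_k U^k v$ lies in $H(v) = \overline{\operatorname{span}}\{U^k v : k\in\Z\}$ by Lemma \ref{Lemma:H(v)_and_U^k}. For general $\phi\in L^2(m_v)$, approximate by polynomials in $L^2(m_v)$ and use that $R_v$ is isometric together with the closedness of $H(v)$ to conclude $R_v(\phi)\in H(v)$.

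The main obstacle, and the heart of the argument, is surjectivity: showing the image $R_v(L^2(m_v))$ is all of $H(v)$, not just a proper closed subspace. Here I would exploit the density statement in Lemma \ref{Lemma:H(v)_and_U^k}, namely that the vectors $\phi(U)v$ arising from Laurent polynomials $\phi$ are dense in $H(v)$. Since such polynomials form a dense subset of $L^2(m_v)$ (trigonometric polynomials are dense on the circle, and $m_v$ is a finite Borel measure supported on $\T$), the image $R_v(L^2(m_v))$ contains this dense set $\{\phi(U)v : \phi \text{ a Laurent polynomial}\}$. Because $R_v$ is an isometry, its image is closed in $H(v)$; a closed subspace containing a dense subset of $H(v)$ must equal $H(v)$. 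This closes the gap between the abstract cyclic subspace and the image of the multiplication-style representation.

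Combining these steps, $R_v : L^2(m_v) \to H(v)$ is a surjective linear isometry, hence an isometric isomorphism of Hilbert spaces. The one point requiring care throughout is the interplay between boundedness and the unbounded functional calculus: for unbounded $\phi$ the operator $\phi(U)v$ is only defined because $\phi\in L^2(m_v)$ guarantees $v\in\operatorname{dom}(\phi(U))$, so every invocation of the identity $\|\phi(U)v\|^2 = \|\phi\|_{L^2(m_v)}^2$ should be justified by Lemma \ref{Thm:Domain} rather than by the $E^U$-essential boundedness hypothesis of Lemma \ref{Lemma:Isom} alone. Once this domain bookkeeping is in place, the three properties (isometry, image inside $H(v)$, dense image) assemble immediately into the claimed isomorphism.
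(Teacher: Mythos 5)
Your proposal is correct, and its central computation---the isometry $\|\phi(U)v\|^2_{L^2(\mu_{\frac14})}=\|\phi\|^2_{L^2(m_v)}$, justified for unbounded $\phi$ by routing through Lemma \ref{Thm:Domain} rather than the essential-boundedness hypothesis---is exactly the one the paper uses. Where you diverge is in how the image of $R_v$ is identified with $H(v)$. The paper works from the minimality definition of $H(v)$: it shows that $K(v)=\{\phi(U)v : \phi\in L^2(m_v)\}$ is closed, contains $v$, and is invariant under $U$ and $U^*$ (the invariance being proved by the same polynomial-approximation argument you use), which yields $H(v)\subseteq K(v)$; the reverse inclusion is then obtained by a separate device, namely that the orthogonal projection $P_{H(v)}$ commutes with every function of $U$, so $P_{H(v)}\psi(U)v=\psi(U)P_{H(v)}v=\psi(U)v$. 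You instead lean on the span characterization $H(v)=\overline{\mathrm{span}}\{U^kv : k\in\Z\}$ from Lemma \ref{Lemma:H(v)_and_U^k}: trigonometric polynomials are dense in $L^2(m_v)$, their images under $R_v$ are dense in $H(v)$, and the isometric image of the complete space $L^2(m_v)$ is closed, so the image is all of $H(v)$; the inclusion of the image inside $H(v)$ follows from the same density plus closedness of $H(v)$. This is shorter and dispenses with both the invariance verification and the commuting-projection step; what the paper's route buys is a proof pattern (closed invariant subspaces and projections commuting with the functional calculus) that it reuses later, for instance in Lemma \ref{Lemma:OGCyclic} and Theorem \ref{Thm:U_Ergodic}. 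Both arguments require the same bookkeeping for the unbounded functional calculus, which you handle correctly.
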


\textbf{Proof:  }
Given $v$, let $K(v)$ be as in Equation \eqref{Eqn:Kv}.  Consider the natural map $\phi(U)v \mapsto \phi$ from $K(v)$ to $L^2(m_v)$.  We find that this map is an isometry: 
\begin{equation}\label{Eqn:Isom}
\begin{split}
 \|\phi\|^2 _{L^2(m_v)}
 & =\int |\phi|^2(z) \dd m_v(z)  \\
 & \underset{(\ref{Eqn:m_v_b})} {=}\Bigr\langle  \Bigr( \int |\phi|^2 \dd E^U \Bigr)v, v \Bigr\rangle_{L^2(m_v)}\\
 & = \langle |\phi|^2(U)v, v \rangle_{L^2(\mu_{\frac{1}{4}})}\\
 & = \langle \phi(U)v,\phi(U)v \rangle_{L^2(\mu_{\frac{1}{4}})} \\
 & = \|\phi(U)v\|^2_{L^2(\mu_{\frac{1}{4}})}.
\end{split}
\end{equation}
Thus $\phi(U)v \mapsto \phi$ is injective, and hence is an isometric isomorphism between $K(v)$ and $L^2(m_v)$.

Next, we will establish that the space $K(v)$ defined in Definition \ref{Defn:Kv}  satisfies conditions (a) -- (d) in the second version of Definition \ref{Defn:Cyc_Subsp}:
\begin{enumerate}[(a)]
\item $K(v)$ is closed
\item $v\in K(v)$
\item $UK(v)\subset K(v)$
\item $U^*K(v)\subset K(v)$.
\end{enumerate}
This will prove that  $H(v) \subseteq K(v)$; we continue on to prove that $H(v) = K(v)$, which will complete the proof. 

For (a), we note that $L^2(m_v)$ is closed and complete.  Since there is an isometry from $L^2(m_v)$ to $K(v)$, we know $K(v)$ is closed.  

Since $m_v$ is a probability measure, the constant function $\mathbf{1}$ defined by $\mathbf{1}(z) = 1$ for all $z\in \T$ belongs to $L^2(m_v)$.  Therefore $\mathbf{1}(U)$ is the identity operator $I$ and $\mathbf{1}(U)v = Iv = v$, so $v\in K_v$, which establishes (b).  

Next, we want to show that $K(v)$ is invariant under $U$.  We will take advantage of polynomials in $U$ and $U^*$, just like in Lemma \ref{Lemma:H(v)_and_U^k}.  Let $\frak{A}$ be the algebra of functions on $\T$:
\begin{equation}\label{Eqn:A}
\frak{A}: = \Bigl\{p(z) = \sum_{-N}^N c_k z^k \Bigr\}.
\end{equation}
 Define the associated space $K^{\textrm{pol}}(v)$ as follows: 
\begin{equation}\label{Eqn:Kpol}
K^{\textrm{pol}}(v): = \{ p(U)v \: | \: p\in \frak{A}\}.
\end{equation}
We will show that $K^{\textrm{pol}}(v)$ satisfies (c) and (d).  Then we will show that $K^{\textrm{pol}}$ is dense in $K(v)$, so $K(v)$ satisfies (c) and (d) as well.  

Define $R_v:  \frak{A}\rightarrow L^2(\mu)$ by
\begin{equation}\label{Eqn:R_v} 
R_v(\phi) = \phi(U)v.
\end{equation}
By Equation (\ref{Eqn:Isom}), $R_v$ is an isometry.

For (c), let $M_z$ denote multiplication by $z$ on functions with domain $\T$.  The following diagram commutes, and as a result, $K^{\textrm{pol}}(v)$ is invariant under $U$ and $U^*$:
\begin{equation}\label{Eqn:CommDiag}
\begin{CD}
\frak{A} @> R_v >>  L^2(\mu)\\
@VV M_z V    @VV  U V\\
\frak{A} @> R_v >>  L^2(\mu)
\end{CD}.
\end{equation}
To see this, let $\phi\in\frak{A}$, where $\phi(z) = \sum_{-N}^N c_k z^k$.  Then following the diagram across the top, we have
\[ R_v(\phi) = \sum_{-N}^N c_k U^kv,\]
and then following the diagram down the right-hand side, we have
\[ UR_v(\phi) = \sum_{-N}^N c_k U^{k+1}v.\]
On the other side,
\[ M_z\phi(z) = \sum_{-N}^N c_k z^{k+1},\]
and $\sum_{-N}^N c_k z^{k+1}\in\frak{A}$.  Applying $R_v$ to $M_z\phi(z)$, we obtain $UR_v(\phi)$.  Therefore, if we set $\psi(z) = z\phi(z)$, we have $\psi\in\frak{A}$, and $U\phi(U)v$ = $\psi(U)v$.  Therefore $K^{\textrm{pol}}(v)$ is invariant under $U$.  

The argument that $K^{\textrm{pol}}(v)$ is invariant under $U^*$ is identical.

Again, let $U$ be the unitary operator defined in Equation (\ref{eqn:U}), let $v\in L^2(\mu_{\frac{1}{4}})$, and let $K(v)$ be the space defined in Equation (\ref{Eqn:Kv}):
\[K(v) = \{ \phi(U)v \:|\: \phi\in L^2(m_v)\}.\]
We claim that $K(v)$ itself (not just $K^{\textrm{pol}}(v)$) is invariant under $U$ and $U^*$. 

Let $\Psi\in K(v)$.  By definition, we can choose $\psi\in L^2(m_v)$ such that
\[ \Psi = \psi(U)v.\]
Instead of working directly with operators of the form $\psi(U)$, we use the Spectral Theorem and the commutative diagram in Equation \eqref{Eqn:CommDiag} to work with functions on $\T$.

Let $\varepsilon > 0$.  First, $m_v$ is a Borel measure, and continuous functions on $\T$ are dense in $L^2(m_v)$ by the Riesz Representation Theorem \cite[Theorem 2.14 p.~ 41 and Theorem 3.14, p.~ 69]{Rud87}.  Therefore, we can choose $\phi\in C(\T)$ so that
\[ \|\psi - \phi\|_{L^2(m_v)} < \frac{\varepsilon}{2}.\]
Second, by the Stone-Weierstrass Theorem, $\frak{A}$ is dense in $C(\T)$ with respect to the $L^{\infty}(\T)$ norm.  Choose $p\in\frak{A}$ so that
\[ \|\phi - p\|_{L^{\infty}(\T)} < \frac{\varepsilon}{4}.\]
Since $m_v(\T) = 1$, $\phi$ and $p$ are also close in the $L^2(m_v)$ norm: 
\[ \|\phi - p\|_{L^2(m_v)} \leq \|\phi - p\|_{L^{\infty}(\T)} < \frac{\varepsilon}{2}.\]
Therefore we can approximate $\psi\in L^2(m_v)$ with a polynomial $p\in \frak{A}$:
\[ \| \psi - p\|_{L^2(m_v)} <  \varepsilon.\]

Finally, the isometry in Equation (\ref{Eqn:Isom}) gives us 
\begin{equation}
\|\psi - p\|_{L^2(m_v)} = \|\Psi - p(U)v\|_{L^2(\mu_{\frac{1}{4}})},
\end{equation}
and since $U$ is unitary,
\[ \|\Psi - p(U)v\|_{L^2(\mu_{\frac{1}{4}})}= \|U\Psi - Up(U)v\|_{L^2(\mu_{\frac{1}{4}})} < \varepsilon.\]
Note that $Up(U)v\in K^{\textrm{pol}}(v)$ by Equation \eqref{Eqn:CommDiag}.  

Since each element of $K(v)$ can be made arbitrarily close to an element of the $U$-invariant space $K^{\textrm{pol}}(v)$, $K(v)$ is also invariant under $U$.  The same argument applies to $U^*$.

We have now established that $K(v)$ satisfies conditions (a) -- (d) in Definition \ref{Defn:Cyc_Subsp}.  Therefore the $U$-cyclic subspace $H(v)$ is contained in $K(v)$.  We claim now that $H(v) = K(v)$.  Let $P_{H(v)}$ be the orthogonal projection onto $H(v)$.  Since $H(v)$ is invariant under $U$ and $U^*$, $P_{H(v)}$ commutes with $U$ and $U^*$ and therefore with all functions of $U$ and $U^*$.   Now, let $\Psi\in K(v)$, and choose $\psi\in L^2(m_v)$ so that $\Psi = \psi(U)v$.  We apply the projection $P_{H(v)}$:
\begin{equation}
P_{H(v)}\psi(U)v
= \psi(U) P_{H(v)} v
= \psi(U)v.
\end{equation}

We have proved that $K(v) = H(v)$, so there is an isometric isomorphism between $H(v)$ and $L^2(m_v)$.   \hfill$\Box$

\subsection{The Hilbert space $\mathscr{H}(\T)$ of $\sigma$-functions}\label{Subsec:LSsigma}

In Theorem \ref{Thm:Hv=Kv}, we showed that every element  $w$ of the cyclic subspace $H(v)$ can be written uniquely in the form $w=\psi(U)v$.   Next, given $w\in H(v)$, we will explicitly compute the corresponding function $\psi$ in Theorem \ref{Thm:Hv=Kv}.  Our result is the following, which is proved in Theorem \ref{Prop:R_v^*}.

\smallskip

\noindent\textbf{Theorem.  }\it  Let $w\in H(v)$, and choose $\psi\in L^2(m_v)$ such that $w = \psi(U)v$.  Then 
\begin{equation}\label{Eqn:RNpsi}
\psi = \sqrt{\frac{dm_w}{dm_v}}.
\end{equation}
\rm

We postpone the formal proof of the proposition immediately above so that we can first explain the techniques used in the proof.  

\subsubsection{Introduction to the Hilbert space $\mathscr{H}(\T)$}  Our main tool in proving Equation \eqref{Eqn:RNpsi} will be the L$\rm\acute{e}$vy-Schwartz Hilbert space $\mathscr{H}(\T)$ of $\sigma$-functions and its associated inner product $\langle \cdot, \cdot\rangle_{\mathscr{H}(\T)}$.  Details about this Hilbert space are developed in Nelson \cite[p.~ 77 ff]{Nel69}.  In particular, the inner product on $\mathscr{H}(\T)$ allows us to move back and forth between the measures $m_w$ and $m_v$ in Equation \eqref{Eqn:RNpsi}.  

Nelson's theory of $\sigma$-functions is developed in conjunction with classifying representations of $C(X)$, where $X$ is a compact Hausdorff space \cite[p. 81]{Nel69}.  His goal is to  write Hilbert spaces as orthogonal direct sums of cyclic subspaces such as those defined in Definition \ref{Defn:Cyc_Subsp}.  \
We translate the work in Nelson to our setting.  First, the compact Hausdorff space $X$ in Nelson is the circle $\T$.  This is because the operator $U$ with which we work is unitary, and its spectrum is contained in $\T$.  The measures with which we work, the measures $m_v$ in Equation \eqref{Eqn:m_v}, are supported on the spectrum of the unitary operator $U$.  

Second, we work with a representation of $C(\T)$ arising from $U$.  Nelson's representation do not necessarily arise from a unitary operator.  The operator $U$ defined in Equation \eqref{eqn:U} induces the representation in Lemma \ref{Thm:DSX28}, and it is precisely the conditions in Lemma \ref{Thm:DSX28} which we need to fit into Nelson's framework.

\subsubsection{The class of $\sigma$-functions and the Nelson isomorphism}\label{Subsec:NIsom}
Second, Nelson develops the theory of $\sigma$-functions on $\T$.  A $\sigma$-function is an equivalence class of pairs of functions and measures as defined below.
\begin{definition}\label{Defn:SigCl}\cite[p. 84]{Nel69}  Suppose $m$ is a Borel measure on $\T$.  A $\mathbf{\sigma}$-\textbf{function} on $\T$ is an equivalence class represented by a pair $(\phi,m)$ where $\phi\in L^2(m)$.  Two pairs $(\phi_1, m_1)$ and $(\phi_2, m_2)$ belong to the same equivalence class if there exists a measure $m$ on $\T$ such that
\begin{enumerate}[(i.)]
\item $ m_1 \ll m$
\item $ m_2 \ll m$
\item $\displaystyle \phi_1 \sqrt{\frac{dm_1}{dm}} = \phi_2\sqrt{\frac{dm_2}{dm}}$ a.e. $m$.
\end{enumerate}
\end{definition}
If we fix $v\in L^2(\mu_{\frac{1}{4}})$ and let $\phi,\psi\in L^2(m_v)$, then the classes $(\phi, m_v)$ and $(\psi, m_v)$ are the same if and only if $\phi=\psi$ a.e. $m_v$---that is, if and only if $\phi =\psi$ in $L^2(m_v)$. 

Nelson proves that Definition \ref{Defn:SigCl} above is independent of the choice of the measure $m$.  Equivalence classes can be added, and, with $m$ as in Definition \ref{Defn:SigCl}, there is an inner product on $\mathscr{H}(\T)$ defined by
\begin{equation}\label{Eqn:NelsonIP}
\langle (\phi_1, m_1), (\phi_2, m_2) \rangle_{\mathscr{H}(\T)} = \int \overline{\phi_1}\phi_2\sqrt{\frac{dm_1}{dm}}\sqrt{\frac{dm_2}{dm}}\:dm,
\end{equation}
which again is independent of $m$ \cite[p. 85]{Nel69}.  Nelson also shows that the vector space $\mathscr{H}(\T)$ is complete, so that $\mathscr{H}(\T)$ is indeed a Hilbert space.  We will use Equation \eqref{Eqn:NelsonIP} to compute the function $\psi$ in Equation \eqref{Eqn:RNpsi}.

Fix $v\in L^2(\mu_{\frac{1}{4}})$ and let $\phi\in L^2(m_v)$.  There is a natural way to define a $\sigma$-function and associate that $\sigma$-function in $\mathscr{H}(\T)$ with an element of of the $U$-cyclic space $H(v)$.  We make the following association, which we call the \textbf{Nelson isomorphism}:
\begin{equation}\label{Eqn:NIsom}
(\phi, m_v) \longleftrightarrow  \phi(U)v.
\end{equation}
The association in Equation \eqref{Eqn:NIsom} is isometric:  by Equation \eqref{Eqn:Isom}, 
\begin{equation*}
\|\phi(U)v\|_{L^2(\mu_{\frac{1}{4}})} = \|\phi\|_{L^2(m_v)}.
\end{equation*}
Since $m_v \ll m_v$, we can apply Equation \eqref{Eqn:NelsonIP} to compute the $\mathscr{H}(\T)$-norm of $(\phi, m_v)$: 
\begin{equation}
\begin{split}
\| (\phi, m_v)\|_{\mathscr{H}}^2
& = \langle (\phi, m_v), (\phi, m_v)\rangle_{\mathscr{H}(\T)}\\
& = \int_{\T} \phi \overline{\phi} \sqrt{\frac{dm_v}{dm_v}} \sqrt{\frac{dm_v}{dm_v}}\:dm_v\\
& = \int_{\T} |\phi|^2 \:dm_v\\
& =  \|\phi\|^2_{L^2(m_v)}\\
& = \|\phi(U)v\|^2_{L^2(\mu_{\frac14})}.
\end{split}
\end{equation}

Theorem \ref{Thm:Hv=Kv} tells us that the isometry above is in fact onto the $U$-cyclic subspace $H(v)$, so Equation \eqref{Eqn:NIsom} defines an isometric isomorphism.

\subsubsection{Absolute continuity and the $U$-cyclic subspace $H(v)$}

We have already established that for each $v$, the cyclic subspace $H(v)$ has two equivalent definitions:  the definition based on minimality given in Definition \ref{Defn:Cyc_Subsp} or the characterization given in Theorem \ref{Thm:Hv=Kv}:
\[ H(v) = \{ \phi(U)v \:|\:\phi\in L^2(m_v)\}.\]
Let $w\in H(v)$, and let $f\in C(\T)$.  Choose $\phi\in L^2(m_v)$ such that $w = \phi(U)v$. Then
\begin{equation}
\begin{split}
\int_{\T} f(z) \:dm_w(z)
& = \langle w, f(U)w \rangle\\
& =\langle \phi(U)v, f(U)\phi(U)v\rangle \\ 
\end{split}
\end{equation}
By Lemma \ref{Thm:DSX28} we know
\[ f(U)\phi(U) = \phi(U)f(U),\]
and we also know that the adjoint of $\phi(U)$ is $\overline{\phi}(U)$.  So,
\begin{equation}\label{Eqn:mwtomv}
\begin{split}
\int_{\T} f(z) \:dm_w(z)
& =\langle v, \overline{\phi}(U)\phi(U)f(U)v\rangle \\ 
& = \int |\phi(z)|^2f(z) \:dm_v(z).
\end{split}
\end{equation}
Since Equation \eqref{Eqn:mwtomv} is true for all $f\in C(\T)$, we can conclude that 
\[ dm_w = |\phi|^2dm_v;\]
in other words, the function $|\phi|^2$ is the Radon-Nikodym derivative of $m_w$ with respect to $m_v$.  Since $w$ was arbitrarily chosen from $H(v)$, we can conclude that for each $w\in H(v)$,
\[ m_w \ll m_v.\]

Extend the definition of $R_v$ on $\frak{A}$ in Equation \eqref{Eqn:R_v} to $L^2(m_v)$, so that 
\[ R_v(\phi) = \phi(U)v\in H(v) \subset L^2(\mu_{\frac{1}{4}})\]
for all $\phi\in L^2(m_v)$.  We will now compute the adjoint of $R_v$, and in the process, we will use the Hilbert space $\mathscr{H}(\T)$.

\begin{theorem}\label{Prop:R_v^*}Fix $v\in \mathcal{H}$ with $\|v\| = 1$.  Let 
\[R_v:L^2(m_v)\rightarrow H(v)\subset L^2(\mu_{\frac{1}{4}})\]
 be defined as $ R_v(\phi) = \phi(U)v$.  Let $w\in H(v)$.  The adjoint of $R_v$ is given by
\begin{equation}\label{Eqn:Rv*}
R_v^*(w) = \sqrt{\frac{dm_w}{dm_v}}.
\end{equation}
\end{theorem}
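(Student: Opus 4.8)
The plan is to read off $R_v^*$ from the defining adjoint identity
\[
\langle R_v\phi,\, w\rangle_{L^2(\mu_{\frac14})} = \langle \phi,\, R_v^*w\rangle_{L^2(m_v)}, \qquad \phi\in L^2(m_v),
\]
and to evaluate the left-hand side inside the Hilbert space $\mathscr{H}(\T)$ of $\sigma$-functions by means of the Nelson inner product \eqref{Eqn:NelsonIP}. Before computing I would record two facts already in hand. First, by Theorem \ref{Thm:Hv=Kv} together with the isometry \eqref{Eqn:Isom}, $R_v$ is an isometric isomorphism of $L^2(m_v)$ onto $H(v)$, so $R_v^*=R_v^{-1}$ restricts to an isometry of $H(v)$ onto $L^2(m_v)$; this tells us $R_v^*w$ is the unique $\psi\in L^2(m_v)$ with $\psi(U)v=w$. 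Second, the discussion preceding the theorem shows $m_w\ll m_v$ for every $w\in H(v)$, so $\sqrt{dm_w/dm_v}$ is well defined $m_v$-a.e., and it lies in $L^2(m_v)$ because $\int_\T (dm_w/dm_v)\,dm_v = m_w(\T) = \|w\|^2<\infty$. Thus the candidate on the right of \eqref{Eqn:Rv*} is a legitimate element of the target space.

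The heart of the argument is the evaluation of $\langle R_v\phi,\, w\rangle$. Under the Nelson isomorphism \eqref{Eqn:NIsom}, $R_v\phi=\phi(U)v$ is represented by the $\sigma$-function $(\phi,m_v)$, while the cyclic vector $w$, viewed as the generator of its own cyclic subspace $H(w)$, is represented by $(1,m_w)$. I would then apply \eqref{Eqn:NelsonIP} with the common dominating measure $m=m_v$, which is permissible since both $m_v\ll m_v$ and $m_w\ll m_v$; here $dm_v/dm_v=1$, so the formula collapses to
\[
\bigl\langle (\phi,m_v),\, (1,m_w)\bigr\rangle_{\mathscr{H}(\T)} = \int_\T \overline{\phi}\,\sqrt{\frac{dm_w}{dm_v}}\,dm_v = \Bigl\langle \phi,\ \sqrt{\frac{dm_w}{dm_v}}\Bigr\rangle_{L^2(m_v)}.
\]
Since this holds for all $\phi\in L^2(m_v)$, comparing with the adjoint identity yields $R_v^*w=\sqrt{dm_w/dm_v}$, which is exactly \eqref{Eqn:Rv*}.

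The step that requires the most care, and the one place where the $\sigma$-function formalism genuinely earns its keep, is the claim that inside $\mathscr{H}(\T)$ the vector $w$ is faithfully represented by $(1,m_w)$ in a way compatible with the $H(v)$-trivialization $(\phi,m_v)\leftrightarrow\phi(U)v$ of \eqref{Eqn:R_v}. Equivalently, writing $\psi=R_v^{-1}w$, I must show the $\sigma$-functions $(1,m_w)$ and $(\psi,m_v)$ coincide. This is a verification against Definition \ref{Defn:SigCl} with dominating measure $m_v$: Equation \eqref{Eqn:mwtomv} already supplies $dm_w=|\psi|^2\,dm_v$, hence $|\psi|=\sqrt{dm_w/dm_v}$, so the two pairs have matching modulus; matching the phase is the delicate point, which I would resolve by taking $\sqrt{dm_w/dm_v}$ to be the nonnegative representative and invoking the equivalence relation of Definition \ref{Defn:SigCl}, so that condition (iii), namely $1\cdot\sqrt{dm_w/dm_v}=\psi\cdot\sqrt{dm_v/dm_v}$, holds $m_v$-a.e. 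The independence of \eqref{Eqn:NelsonIP} from the choice of dominating measure then guarantees the computation is unambiguous. As a consistency check I would confirm $\|R_v^*w\|^2_{L^2(m_v)}=\int_\T (dm_w/dm_v)\,dm_v=m_w(\T)=\|w\|^2$, matching the fact that $R_v^*=R_v^{-1}$ is an isometry of $H(v)$ onto $L^2(m_v)$.
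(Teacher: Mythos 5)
Your proposal is correct and follows essentially the same route as the paper: the adjoint is read off from $\langle w, R_v\phi\rangle_{L^2(\mu_{\frac14})} = \langle (\mathbf{1},m_w),(\phi,m_v)\rangle_{\mathscr{H}(\T)}$, evaluated via the Nelson inner product \eqref{Eqn:NelsonIP} with $m_v$ as the common dominating measure. The one point you single out for extra care --- that identifying $w$ with the $\sigma$-function $(\mathbf{1},m_w)$ is compatible with its representation $(\psi,m_v)$ where $w=\psi(U)v$, which via $dm_w=|\psi|^2\,dm_v$ amounts to choosing the nonnegative square root and so is really a statement about $|\psi|$ --- is exactly the step the paper's proof passes over silently, so your added discussion is a refinement rather than a divergence.
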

\noindent\textbf{Proof:  }Let $w\in H(v)$, and let $\phi\in L^2(m_v)$.  We compute $\langle R_v^*w, \phi\rangle_{L^2(m_v)}$:  
\begin{equation}
\begin{split}
\langle R_v^*w, \phi\rangle_{L^2(m_v)}
& = \langle w, R_v\phi\rangle_{L^2(\mu_{\frac{1}{4}})}\\
& = \langle (1, m_w), (\phi, m_v)\rangle_{\mathscr{H}(\T)},
\end{split}
\end{equation}
where the inner product in the last line comes from an application of the Nelson isomorphism (Subsection \ref{Subsec:NIsom}).

Since $m_w\ll m_v$, and $m_v \ll m_v$, we can use $m_v$ as the measure in the Nelson inner product in Equation \eqref{Eqn:NelsonIP}.  We calculate $R^*_v$:
\begin{equation}
\begin{split}
& \langle R_v^*w, \phi\rangle_{L^2(m_v)}\\
& = \langle (\mathbf{1}, m_w), (\phi, m_v)\rangle_{\mathscr{H}(\T)}\\
& = \int_{\T} \mathbf{1}(z)\phi(z) \sqrt{\frac{dm_w}{dm_v}}(z) \sqrt{\frac{dm_v}{dm_v}}(z) \: dm_v(z)\\
& = \int_{\T} \phi(z)  \sqrt{\frac{dm_w}{dm_v}}(z)\: dm_v(z)\\
\end{split}
\end{equation}
We conclude that $R^*_vw = \sqrt{\frac{dm_w}{dm_v}}$.
\hfill$\Box$

\begin{remark}
In general, knowing an explicit equation for the adjoint of $R_v$ is rare. 
\end{remark}

\section{Spectral properties of $U$}\label{Sec:Spectral}
In this section and the next we prove that the unitary operator $U$ on $L^2(\mu_{\frac14})$  defined from the $5$-scaled ONB  $5\Gamma$ acts ergodically, with ergodicity defined relative to $\mu_{\frac14}$ in the sense of Halmos \cite{Hal56}.  Specifically, only the constant functions are invariant under $U$.

\begin{lemma}\label{Lemma:DiracMassIFFfixed}
Suppose $v\in L^2(\mu)$ and $\|v\| = 1$.  The measure $m_v$ is a Dirac mass supported at $1$ if and only if $Uv = v$.
\end{lemma}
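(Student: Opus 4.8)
The plan is to reduce both implications to a single isometry identity. Take the continuous (hence $E^U$-essentially bounded, Borel-measurable) function $\phi(z) = z - 1$ on $\T$, so that $\phi(U) = U - I$. By the isometry of Lemma \ref{Lemma:Isom} applied to this $\phi$,
\begin{equation}\label{Eqn:FixedIsom}
\|(U-I)v\|^2_{L^2(\mu)} = \int_{\sigma(U)} |z-1|^2 \dd m_v(z).
\end{equation}
Since $U$ is unitary, $Uv = v$ is equivalent to $(U-I)v = 0$, i.e.~to the left-hand side of \eqref{Eqn:FixedIsom} being zero. Both directions of the stated equivalence will come from analyzing when the right-hand side vanishes.

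First I would handle the forward reading of \eqref{Eqn:FixedIsom}. The integrand $|z-1|^2$ is nonnegative on $\T$ and vanishes only at the single point $z = 1$. Hence the integral on the right is zero if and only if $m_v$ assigns no mass to $\T\setminus\{1\}$, that is, if and only if $m_v$ is concentrated at $1$. Combining this with the previous paragraph gives the chain
\[
Uv = v \iff \int_{\sigma(U)} |z-1|^2 \dd m_v(z) = 0 \iff m_v\bigl(\T\setminus\{1\}\bigr) = 0.
\]
To upgrade ``concentrated at $1$'' to ``equals the Dirac mass $\delta_1$,'' I would invoke the hypothesis $\|v\| = 1$: as noted after Equation \eqref{Eqn:m_v}, this forces $m_v$ to be a probability measure. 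A probability measure with $m_v(\T\setminus\{1\}) = 0$ satisfies $m_v(\{1\}) = 1$, so $m_v = \delta_1$. This establishes the implication $Uv = v \Rightarrow m_v = \delta_1$, and the argument is reversible through \eqref{Eqn:FixedIsom}.

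For the remaining direction, suppose $m_v = \delta_1$. Then $\int_{\sigma(U)} |z-1|^2 \dd\delta_1(z) = |1-1|^2 = 0$, so by \eqref{Eqn:FixedIsom} we get $\|(U-I)v\|^2 = 0$ and hence $Uv = v$. I do not anticipate a genuine obstacle here; the only points meriting care are confirming that $\phi(z) = z-1$ is $E^U$-essentially bounded so that Lemma \ref{Lemma:Isom} applies (automatic, since $\phi$ is continuous on the compact set $\T$), and using that $m_v$ is a probability measure in order to promote mere concentration at $1$ to the full identity $m_v = \delta_1$.
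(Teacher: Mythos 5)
Your proof is correct, and it routes both directions through the single identity $\|(U-I)v\|^2 = \int_{\T}|z-1|^2\dd m_v(z)$, which is a genuinely different (and tighter) organization than the paper's. For the direction $m_v=\delta_1\Rightarrow Uv=v$, the two arguments are essentially the same computation in different clothing: expanding $\int|z-1|^2\dd m_v = 2-\int z\dd m_v-\int\bar z\dd m_v$ recovers exactly the paper's expansion of $\|v-Uv\|^2$ via $\langle Uv,v\rangle=\int z\dd m_v$. The real divergence is in the converse. The paper proves $Uv=v\Rightarrow m_v=\delta_1$ by establishing $f(U)v=f(1)v$ for polynomials $f$ and then asserting that this passes to characteristic functions $\chi_A$, so that $m_v(A)=\chi_A(1)$; that extension step is delicate as written, since point evaluation at $1$ is not continuous for $L^2(m_v)$-approximation until one already knows $1$ is an atom of $m_v$. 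Your argument sidesteps this entirely: from $\|(U-I)v\|^2=0$ and the positivity of $m_v$ you conclude $|z-1|^2=0$ $m_v$-a.e., hence $m_v(\T\setminus\{1\})=0$, and the normalization $m_v(\T)=\|v\|^2=1$ (noted after Equation \eqref{Eqn:m_v}) promotes this to $m_v=\delta_1$. What your approach buys is a converse direction that uses only Lemma \ref{Lemma:Isom} and elementary measure theory, with no functional-calculus limit argument; what the paper's approach buys is the intermediate identity $f(U)v=f(1)v$, which it reuses in the proof of Proposition \ref{Prop:Cyclic}.
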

\textbf{Proof:  }
($\Rightarrow$)  Assume $m_v = \delta_1$, and consider the norm $\|Uv - v\|^2$.  By separating our inner product into four parts, we get
\begin{equation}
\begin{split}
\|v - Uv\|^2
& = \Bigl\langle v - Uv, v - Uv\Bigr\rangle_{L^2(\mu)}\\
& = \|Uv\|^2 + \|v\|^2 - \langle v, Uv\rangle - \overline{ \langle v, Uv\rangle}.
\end{split}
\end{equation}
Since $U$ is unitary and $\|v\|=1$, we have
\begin{equation}
\|v - Uv\|^2 = 2 - \langle v, Uv\rangle - \overline{ \langle v, Uv\rangle}.
\end{equation}
Now we take advantage of the measure $m_v$:
\begin{equation}
\begin{split}
\|v - Uv\|^2 
& = 2 - \langle v, Uv\rangle - \overline{ \langle v, Uv\rangle}\\
& = 2 - \int z \:dm_v(z) - \int \overline{z} \:dm_v(z)\\
& = 2 - \int z \:d\delta_1(z) - \int \overline{z} \:d\delta_1(z)\\
& = 2 - 1 - \overline{1} = 0.
\end{split}
\end{equation}
Therefore $\|v - Uv\|= 0$, and $v = Uv$.

\noindent ($\Leftarrow$)  Suppose $Uv = v$ for some $v$ with $\|v\|=1$.   For any $f\in L^2(m_v)$,
\begin{equation}\label{Eqn:f(U)E^u}
f(U) = \int f(z) E^U(dz).
\end{equation}
Since $Uv = v$, we find that
\begin{equation}\label{Eqn:f(1)v}
f(U)v = f(1) v.
\end{equation}
To see this, start with a polynomial:  if $f(U) = \sum c_k U^k$, then
\[ f(U)v = \Bigl(\sum c_k U^k\Bigr)v = \sum c_k U^kv = \sum c_k v = f(1)v.\]
We then use the polynomials as the starting point for approximating all other functions in $L^2(\mu)$.  In particular, let $f$ be a characteristic function $\chi_A$ for a Borel subset $A$ of the circle $\mathbb{T}$.  The right hand side above is $\chi_A(1)$.  Then by Equation \eqref{Eqn:f(1)v},  \[m_v(A) = \langle \chi_A(1)v, v \rangle = \left\{ \begin{matrix} 1 & 1 \in A \\ 0 & 1 \notin A. \end{matrix} \right.\]  In other words, $m_v$ is the Dirac mass $\delta_1$.
\hfill$\Box$
 
\begin{corollary}\label{Cor:Dirac}  Suppose $v\in L^2(\mu)$ and $\|v\| = 1$.  Then $Uv= \lambda v$ where $\lambda\in \mathbb{S}^1$ if and only if $m_v$ is a Dirac mass supported at $\lambda$.
\end{corollary}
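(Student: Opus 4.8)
The plan is to deduce Corollary \ref{Cor:Dirac} from Lemma \ref{Lemma:DiracMassIFFfixed} by rotating the operator rather than reproving the eigenvalue computation from scratch. Since $\lambda\in\mathbb{S}^1$, the scalar multiple $\tilde U:=\overline{\lambda}\,U$ is again a unitary operator on $L^2(\mu)$, and because $\overline{\lambda}\lambda=1$ we have the equivalence $Uv=\lambda v \iff \tilde U v = v$. Thus the fixed-point statement for $U$ at eigenvalue $\lambda$ is literally the fixed-point statement for $\tilde U$ at eigenvalue $1$, which is governed by Lemma \ref{Lemma:DiracMassIFFfixed}. The only thing that must be tracked is how the scalar measure $m_v$ attached to $U$ in \eqref{Eqn:m_v} compares with the one attached to $\tilde U$; once that is pinned down, the lemma finishes the argument.

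First I would identify the projection-valued measure of $\tilde U$. Writing $r_\lambda\colon\T\to\T$ for $r_\lambda(z)=\overline{\lambda}z$, the functional calculus gives $\tilde U = \overline{\lambda}\int_{\sigma(U)} z\,dE^U(z) = \int_{\T} r_\lambda(z)\,dE^U(z)$, which is exactly $\int_{\T} w\,d\bigl((r_\lambda)_\ast E^U\bigr)(w)$. By the uniqueness clause of Theorem \ref{Thm:Spectral}, the p.v.m.\ of $\tilde U$ is the pushforward $E^{\tilde U}=(r_\lambda)_\ast E^U$, that is $E^{\tilde U}(A)=E^U(\lambda A)$. Taking the diagonal matrix coefficient against $v$ as in \eqref{Eqn:m_v} then yields $m_v^{\tilde U}(A)=m_v^U(\lambda A)$; in other words $m_v^{\tilde U}=(r_\lambda)_\ast m_v^U$ is the rotate of $m_v^U$ by $\overline{\lambda}$.

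Next I would apply Lemma \ref{Lemma:DiracMassIFFfixed} to the unitary $\tilde U$: it states $\tilde U v = v \iff m_v^{\tilde U}=\delta_1$. Combining this with the measure transport from the previous step, $m_v^{\tilde U}=\delta_1 \iff (r_\lambda)_\ast m_v^U=\delta_1 \iff m_v^U=\delta_\lambda$, since pushforward sends $\delta_p$ to $\delta_{\overline{\lambda}p}$ and $\overline{\lambda}p=1$ forces $p=\lambda$. Chaining the equivalences $Uv=\lambda v \iff \tilde U v = v \iff m_v^{\tilde U}=\delta_1 \iff m_v=\delta_\lambda$ gives precisely the assertion of Corollary \ref{Cor:Dirac}.

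I expect the only genuine obstacle to be the bookkeeping of the rotation: getting the direction of $r_\lambda$ right so that $\delta_1$ for $\tilde U$ corresponds to $\delta_\lambda$ (and not $\delta_{\overline{\lambda}}$) for $U$. A clean alternative that bypasses the p.v.m.\ transport entirely is simply to rerun the two halves of the proof of Lemma \ref{Lemma:DiracMassIFFfixed} verbatim with $\lambda$ in place of $1$. For the direction $m_v=\delta_\lambda\Rightarrow Uv=\lambda v$, expand $\|Uv-\lambda v\|^2$, reduce it using $\|v\|=1$, $|\lambda|=1$ and unitarity, and evaluate the cross terms through $\int z\,dm_v=\lambda$ and $\int\overline{z}\,dm_v=\overline{\lambda}$ via \eqref{Eqn:m_v_b}, obtaining $2-1-1=0$. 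For the converse, use $f(U)v=f(\lambda)v$ on the polynomials $f$ and pass to characteristic functions $f=\chi_A$ to read off $m_v(A)=\chi_A(\lambda)$, i.e.\ $m_v=\delta_\lambda$.
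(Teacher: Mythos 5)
Your proposal is correct, but your primary route is genuinely different from the paper's. The paper's entire proof of Corollary \ref{Cor:Dirac} is the one-line instruction ``Replace `$1$' in the proof above by `$\lambda$'\,''---that is, it is exactly the \emph{alternative} you sketch in your final paragraph: rerun both halves of the proof of Lemma \ref{Lemma:DiracMassIFFfixed} with $\int z\,dm_v=\lambda$ in place of $\int z\,dm_v=1$ in one direction, and $f(U)v=f(\lambda)v$ in place of $f(U)v=f(1)v$ in the other. Your main argument instead reduces the eigenvalue-$\lambda$ statement to the fixed-point statement by passing to the unitary $\tilde U=\overline{\lambda}\,U$ and transporting the spectral data: the identification $E^{\tilde U}=(r_\lambda)_\ast E^U$ via the uniqueness clause of Theorem \ref{Thm:Spectral}, hence $m_v^{\tilde U}=(r_\lambda)_\ast m_v$, and the observation that $(r_\lambda)_\ast m_v=\delta_1$ forces $m_v=\delta_\lambda$, are all correct, and the direction of the rotation is handled properly. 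What your route buys is that no computation from the lemma is repeated---the corollary becomes a formal consequence of the lemma together with the covariance of the p.v.m.\ under multiplication by a unimodular scalar. The one point you should make explicit is that Lemma \ref{Lemma:DiracMassIFFfixed}, although stated for the particular operator $U$ of Equation \eqref{eqn:U}, is being applied to the different unitary $\tilde U$; this is legitimate because its proof uses only unitarity and $\|v\|=1$, but it deserves a sentence. The paper's substitution argument avoids both the pushforward bookkeeping and that remark, at the cost of implicitly redoing the two computations.
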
  
\textbf{Proof:  }Replace ``$1$'' in the proof above by ``$\lambda$''.\hfill$\Box$

Now, we will assume that $v$ is a non-constant function which is fixed by $U$.  It is relatively clear  that $v$ cannot actually be one of the $e_{\gamma}$'s for some $\gamma\in \Gamma\backslash\{0\}$.  If it were, then $e_{5\gamma} = e_{\gamma}$, which could be true on some finite set of points but is not true on $X_{\frac14}$.  We next consider whether $v$ can belong to a cyclic subspace generated by one of the $e_{\gamma}$'s.  As in earlier sections, we use the notation $H(v)$ to denote the $U$-cyclic subspace generated by $v \in L^2(\mu_{\frac14})$.

\begin{proposition}\label{Prop:Cyclic}  Suppose $v\in L^2(\mu)$ and $\|v\| = 1$. Choose  $\gamma\in  \Gamma\backslash\{0\}$ such that $\langle v, e_{\gamma}\rangle \neq 0$. 
 If $Uv = v$, then $v$ is not in the $U$-cyclic subspace $H(e_{\gamma})$ generated by $e_{\gamma}$.  
\end{proposition}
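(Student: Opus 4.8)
The plan is to argue by contradiction and to reduce the statement to a single assertion about the scalar spectral measure $m_{e_\gamma}$. Assume, contrary to the claim, that $v\in H(e_\gamma)$. Since $Uv=v$ and $\|v\|=1$, Lemma~\ref{Lemma:DiracMassIFFfixed} gives $m_v=\delta_1$; and because $v$ lies in the cyclic subspace $H(e_\gamma)$, the absolute-continuity property established for cyclic subspaces yields $m_v\ll m_{e_\gamma}$. Hence $\delta_1\ll m_{e_\gamma}$, which forces $m_{e_\gamma}(\{1\})>0$. The same conclusion follows concretely from Theorem~\ref{Thm:Hv=Kv}: writing $v=\phi(U)e_\gamma$ with $\phi\in L^2(m_{e_\gamma})$ and pushing $Uv=v$ through the commuting diagram \eqref{Eqn:CommDiag}, in which $U$ becomes multiplication by $z$, we get $(z-1)\phi=0$ a.e.\ $m_{e_\gamma}$, so that $\phi$ is supported on $\{1\}$ and $v=\phi(1)\,E^U(\{1\})e_\gamma$. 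Pairing with $e_\gamma$ and using \eqref{Eqn:m_v_b} gives $\langle v,e_\gamma\rangle=\phi(1)\,m_{e_\gamma}(\{1\})$, so the hypothesis $\langle v,e_\gamma\rangle\neq0$ again forces $m_{e_\gamma}(\{1\})>0$. Thus it suffices to prove that $m_{e_\gamma}$ has no atom at $z=1$, equivalently that $e_\gamma$ is orthogonal to the fixed space $F$ of $U$, i.e.\ to the range of the spectral projection $E^U(\{1\})$.

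To attack $m_{e_\gamma}(\{1\})=0$, I would first exploit the self-similarity furnished by the Cuntz isometry $S_0$, $S_0 e_\eta=e_{4\eta}$, which commutes with $U$. From $US_0=S_0U$ and $S_0^*S_0=I$ one obtains $S_0^*U^nS_0=U^n$ for every $n\in\Z$, hence $S_0^*E^U(A)S_0=E^U(A)$ for all Borel $A\subseteq\T$; and since $S_0S_0^*$ commutes with $U$ and therefore with $E^U$, it follows that $E^U(\{1\})$ commutes with $S_0$. Consequently $E^U(\{1\})e_{4\eta}=S_0\,E^U(\{1\})e_\eta$, so $m_{e_{4\eta}}=m_{e_\eta}$, and iterating gives $m_{e_{4^n\gamma}}(\{1\})=m_{e_\gamma}(\{1\})=:\alpha$ for all $n\geq0$. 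Thus, were $\alpha>0$, each of the infinitely many orthonormal vectors $e_{4^n\gamma}$ would project onto $F$ with the same positive length, producing infinitely many nonzero fixed vectors $S_0^n E^U(\{1\})e_\gamma$.

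I expect the main obstacle to be converting $\alpha>0$ into an outright contradiction. A Bessel estimate only gives $\sum_n\|E^U(\{1\})e_{4^n\gamma}\|^2\le\dim F$, which is vacuous unless one already knows that $F$ is finite dimensional; and the self-similarity through $S_0$ does not close this gap, precisely because $U$ fails to commute with $S_1$, so that the restriction of $U$ to the $S_1$-invariant complement $S_1 L^2(\mu)$ is not conjugate to $U$ and cannot be iterated in the same way. The heart of the argument is therefore a global control of $F$, namely that the only fixed vectors are constants, equivalently $e_\gamma\perp F$ for every $\gamma\neq0$. I would establish this through the $\sigma$-function and Radon--Nikodym machinery of Theorems~\ref{Thm:Hv=Kv} and \ref{Prop:R_v^*}, noting that the candidate fixed vector $w=E^U(\{1\})e_\gamma$ already satisfies $w\perp e_0$ (because $E^U(\{1\})e_0=e_0$ while $e_\gamma\perp e_0$): once it is shown that every fixed vector must be a scalar multiple of $e_0$, we obtain $w=0$, i.e.\ $\alpha=0$, contradicting $m_{e_\gamma}(\{1\})>0$. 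Proving that last implication is where the delicate spectral analysis of $U$, and not merely its self-similarity under $S_0$, is required.
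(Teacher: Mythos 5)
Your reduction is sharp, and its first half coincides with the paper's own argument: writing $v=\phi(U)e_\gamma$, transporting $Uv=v$ through the isomorphism of Theorem \ref{Thm:Hv=Kv} to get $(z-1)\phi=0$ a.e.\ $m_{e_\gamma}$, and concluding $v=\phi(1)E^U(\{1\})e_\gamma$ with $\phi(1)\,m_{e_\gamma}(\{1\})=\langle v,e_\gamma\rangle\neq 0$ is exactly how the paper begins. You are also right that everything then hinges on the atom $m_{e_\gamma}(\{1\})$: if $0<m_{e_\gamma}(\{1\})<1$, the unit vector $m_{e_\gamma}(\{1\})^{-1/2}E^U(\{1\})e_\gamma$ lies in $H(e_\gamma)$, is fixed by $U$, and is not orthogonal to $e_\gamma$, so any proof must exclude that range. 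Your $S_0$-covariance computation giving $m_{e_{4^n\gamma}}(\{1\})=m_{e_\gamma}(\{1\})$ is correct but, as you yourself note, does not close the argument.

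The gap is that you never prove $m_{e_\gamma}(\{1\})=0$; you defer it to ``the delicate spectral analysis,'' and the statement you defer to --- $e_\gamma\perp F$ for every $\gamma\neq 0$, i.e.\ $F=\C e_0$ --- is precisely Theorem \ref{Thm:U_Ergodic}, the theorem this proposition exists to help prove (it is invoked there both to guarantee $v_2\neq 0$ and to rule out the fixed vector $Qv\in H(e_\gamma)$ that the extreme-point argument produces). As outlined, your route is therefore circular within the paper's architecture, and no independent proof of the key fact is supplied. The paper instead closes the argument entirely inside $H(e_\gamma)$: from $m_v=\delta_1$ (Lemma \ref{Lemma:DiracMassIFFfixed}) it derives the two measure identities $|f|^2\,dm_{e_\gamma}=d\delta_1$ and $f\,dm_{e_\gamma}=f(1)\,m_{e_\gamma}(\{1\})\,d\delta_1$, concludes $|f(1)|^2=1$ and $m_{e_\gamma}(\{1\})=1$, hence $v=f(1)e_\gamma$, and this contradicts the elementary lemma of Section \ref{Subsec:Motiv} that $Ue_\gamma=\lambda e_\gamma$ forces $\gamma=0$; no global control of the fixed space is invoked. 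To repair your proposal you would need either to supply an independent argument forcing $m_{e_\gamma}(\{1\})\in\{0,1\}$ (after which the value $1$ is excluded exactly as you indicate), or to restructure the logic so that the proposition is no longer a prerequisite for the ergodicity statement you are appealing to.
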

\textbf{Proof:  }  From Theorem \ref{Thm:Hv=Kv}, we know that the elements of $H(e_{\gamma})$ are in one-to-one isometric correspondence with the functions in $L^2(m_{e_{\gamma}})$.  Our goal will be to show that for all $f \in L^2(m_{e_{\gamma}})$, 
\[ f(U) e_{\gamma} = f(1) e_{\gamma}.\]

Suppose $v\in H(e_{\gamma})$---i.e., 
\begin{equation}\label{Eqn:v=f(U)e}
v = f(U)e_{\gamma},
\end{equation} 
where $f\in L^2(m_{e_{\gamma}})$.  Since $Uv = v$, we have $Uf(U)e_{\gamma} = f(U)e_{\gamma}$.  Therefore, using the isometric isomorphism between $H(e_{\gamma})$ and $L^2(m_{e_{\gamma}})$, we have
\begin{equation}\label{Eqn:fnonzero}
z f(z) = f(z), \textrm{ or } f(z)(z-1) = 0
\end{equation}
a.e. $m_{e_{\gamma}}$ on $\mathbb{T}$.  Therefore, $f(z)$ is nonzero at $z=1$, and $f$ is $0$ a.e. $m_{e_{\gamma}}$.  In other words, since $U$ fixes $v$, we know that $f$ is fixed by multiplication by $z$.  

\noindent\textbf{Claim 1:  }The measures $m_{e_{\gamma}}$ and $m_v$ are related in the following way:
\begin{equation}
|f(z)|^2dm_{e_{\gamma}}(z) = dm_{v}(z) = d\delta_1(z).
\end{equation}

Let $\phi\in C(\mathbb{T})$.  Then
\begin{equation}
\begin{split}
\int_{\mathbb{T}} |f(z)|^2\phi(z)dm_{e_{\gamma}}(z)
& = \langle f(U) \overline{f(U)} \phi(U) e_{\gamma}, e_{\gamma}\rangle_{L^2(\mu)}\\
& =  \langle \phi(U)  f(U) e_{\gamma},  f(U) e_{\gamma}\rangle_{L^2(\mu)}\\
& = \langle \phi(U)v, v\rangle_{L^2(\mu)} \\
& = \int \phi(z) dm_v(z).
\end{split}
\end{equation}
By Lemma \ref{Lemma:DiracMassIFFfixed}, $m_v = \delta_1$. 

\noindent\textbf{Claim 2:  } The measure $f dm_{e_{\gamma}}$ is a constant multiple of the measure $\delta_1$.
Let $\phi\in C(\mathbb{T})$.  Then
\begin{equation}
\int_{\mathbb{T}} f(z)\phi(z)dm_{e_{\gamma}}(z)
= \langle f(U)\phi(U)e_{\gamma}, e_{\gamma} \rangle_{L^2(\mu)}.
\end{equation}
Split $\mathbb{T}$ into two pieces:
\begin{equation}
\begin{split}
& \int_{\mathbb{T}} f(z)\phi(z)dm_{e_{\gamma}}(z) \\
& = \int_{\{1\}} f(z)\phi(z)dm_{e_{\gamma}}(z) +\int_{\mathbb{T}\backslash\{1\}} f(z)\phi(z)dm_{e_{\gamma}}(z)
\end{split}
\end{equation}
By Equation (\ref{Eqn:fnonzero}), we know that the integral over $\mathbb{T}\backslash\{1\}$ is $0$.  Therefore
\begin{equation}
\int_{\mathbb{T}} f(z)\phi(z)dm_{e_{\gamma}}(z) = f(1)\phi(1) m_{e_{\gamma}}(\{1\}),
\end{equation}
and $f(z)dm_{e_{\gamma}} = f(1) m_{e_{\gamma}}(\{1\})d\delta_1$.

We can combine Claims 1 and 2 to deduce that $m_{e_{\gamma}}(\{1\}) = 1$ and $|f(1)|^2 = 1$:
\begin{equation}\label{Eqn:Dirac1}
\begin{split}
d\delta_1(z)
& = |f(z)|^2 dm_{e_{\gamma}}(z)\\
& = \overline{f(z)} f(z)  dm_{e_{\gamma}}(z)\\
& = \overline{f(z)}f(1) m_{e_{\gamma}}(\{1\})d\delta_1(z)\\
& =  \overline{f(1)}f(1) m_{e_{\gamma}}(\{1\})d\delta_1(z)\\
& = |f(1)|^2m_{e_{\gamma}}(\{1\})d\delta_1(z).\\
\end{split}
\end{equation}
Since $|f|^2m_{e_{\gamma}} = \delta_1$ is a probability measure supported at $1$, we know that 
\begin{equation}\label{Eqn:f(1)}
|f(1)|^2 = 1.
\end{equation}  But
\[|f(1)|^2m_{e_{\gamma}}(\{1\}) = 1\]
by Equation (\ref{Eqn:Dirac1}), so 
\begin{equation}\label{Eqn:m_e}
m_{e_{\gamma}}(\{1\}) = 1
\end{equation} as well.

Next, we use the isometric isomorphism between $H(e_{\gamma})$ and $L^2(m_{e_{\gamma}})$ to show that $f(1)e_{\gamma} = f(U)e_{\gamma}$.   First, let $K(z) = f(z) - f(1)$ for $z\in\mathbb{T}$.
\begin{equation}
\begin{split}
\|f(U)e_{\gamma} - f(1)e_{\gamma}\|^2_{L^2(\mu))}
& = \|K(U)e_{\gamma}\|^2_{L^2(\mu)}\\
& = \|K(z)\|^2_{L^2(m_{e_{\gamma}})}\\
\end{split}
\end{equation}
Now, we look at the inner product defining $\|K(z)\|^2_{L^2(m_{e_{\gamma}})}$:
\begin{equation}
\begin{split}
&\int_{\mathbb{T}} K(z) \overline{K(z)} dm_{e_{\gamma}}\\
& = \int_{\mathbb{T}} |f(z)|^2 dm_{e_{\gamma}} + \int_{\mathbb{T}} |f(1)|^2 dm_{e_{\gamma}} - 2\textrm{Re}\overline{f(1)}\int_{\mathbb{T}} f(z) dm_{e_{\gamma}}\\
&=  \underbrace{1}_{\textrm{Claim 1}} +  \underbrace{1}_{\textrm{Eqn (\ref{Eqn:f(1)})}} - \underbrace{2 \textrm{Re} \overline{f(1)}f(1)}_{\textrm{Claim 2 and Eqn (\ref{Eqn:m_e})}} = 0.
\end{split}
\end{equation}

Finally, we show that $v$ cannot be in the cyclic subspace $H(e_{\gamma})$.  Assume $v = f(U)e_{\gamma}$.  Then
\[ v  = f(1) e_{\gamma},\]
but $U$ cannot fix any scalar multiple of an exponential function by the paragraph following Corollary \ref{Cor:Dirac}.
\hfill$\Box$

\begin{lemma}\label{Lemma:OGCyclic}
Suppose $U$ is a unitary operator on $L^2(\mu)$.  Suppose $v,w\in L^2(\mu)$.  If $v \perp H(w)$, then $H(v) \perp H(w).$
\end{lemma}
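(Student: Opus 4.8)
The plan is to exploit the symmetry built into Definition \ref{Defn:Cyc_Subsp}: because $H(w)$ is invariant under both $U$ and $U^* = U^{-1}$, its orthogonal complement is invariant under both as well. Once this is in hand, the hypothesis $v \perp H(w)$ forces the entire cyclic subspace $H(v)$ to lie inside $H(w)^\perp$, which is precisely the desired conclusion.

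First I would prove the auxiliary fact that if $M$ is any closed subspace invariant under both $U$ and $U^*$, then $M^\perp$ is likewise invariant under both. This is a one-line adjoint computation: for $x \in M^\perp$ and $y \in M$ we have $\langle Ux, y\rangle = \langle x, U^*y\rangle = 0$, since $U^*y \in M$ and $x \perp M$; hence $Ux \in M^\perp$. The same computation with the roles of $U$ and $U^*$ interchanged gives $\langle U^*x, y\rangle = \langle x, Uy\rangle = 0$, so $U^*x \in M^\perp$ as well.

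Next I would apply this with $M = H(w)$, which by Definition \ref{Defn:Cyc_Subsp} is closed and invariant under both $U$ and $U^*$. It follows that $H(w)^\perp$ is closed (orthogonal complements always are) and invariant under both $U$ and $U^*$; in the language of the restated Definition \ref{Defn:Cyc_Subsp}, the subspace $H(w)^\perp$ satisfies conditions (a), (c), and (d). The hypothesis $v \perp H(w)$ says exactly that $v \in H(w)^\perp$, supplying condition (b). Since $H(v)$ is the intersection of all closed subspaces satisfying (a)--(d), minimality gives $H(v) \subseteq H(w)^\perp$, i.e.\ $H(v) \perp H(w)$.

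There is no serious obstacle here; the real content of the lemma is simply the observation that reducing subspaces---those invariant under both $U$ and $U^*$---have reducing complements, which is exactly the reason $U^*$-invariance was included in Definition \ref{Defn:Cyc_Subsp}. The only point needing a word of care is that the argument genuinely requires invariance under $U^*$ and not merely under $U$: with one-sided invariance the adjoint computation would break down, since $U^*y$ need not lie in $M$. It is precisely the two-sided invariance that makes the symmetric conclusion $H(v)\perp H(w)$ available rather than a weaker one-sided statement.
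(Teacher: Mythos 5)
Your proof is correct, but it takes a genuinely different route from the paper's. The paper argues concretely: it represents a generic element of $H(w)$ as $f(U)w$ with $f\in L^2(m_w)$ (via Theorem \ref{Thm:Hv=Kv}), computes $\langle U^k v, f(U)w\rangle = \langle v, U^{-k}f(U)w\rangle$, observes that $z^{-k}f(z)\in L^2(m_w)$ so that $U^{-k}f(U)w\in H(w)$ and the inner product vanishes, and then passes from trigonometric polynomials $g(U)v$ to all of $H(v)$ by density. You instead argue abstractly: a closed subspace invariant under both $U$ and $U^{*}$ has an orthogonal complement with the same two-sided invariance, so $H(w)^{\perp}$ satisfies conditions (a), (c), (d) of the restated Definition \ref{Defn:Cyc_Subsp}, and the hypothesis $v\perp H(w)$ supplies (b); minimality then forces $H(v)\subseteq H(w)^{\perp}$. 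Your argument is shorter, needs none of the $L^2(m_v)$ machinery, and works verbatim for any unitary on any Hilbert space; your closing remark correctly identifies the two-sided invariance in Definition \ref{Defn:Cyc_Subsp} as the essential ingredient. What the paper's version buys is continued contact with the concrete model $H(w)=\{f(U)w : f\in L^2(m_w)\}$, which is the representation actually exploited in the proof of Theorem \ref{Thm:U_Ergodic} immediately afterward. Both proofs are complete.
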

\textbf{Proof:  }Let $f\in L^2(m_w)$, and let $x = f(U)w$.  Let  $k\in \Z$.  Consider the inner product
\begin{equation}
\langle U^k v, f(U)w\rangle = \langle v, U^{-k}f(U)w\rangle.
\end{equation}
Since  $f\in L^2(m_w)$ and $m_w$ is supported on the circle $\mathbb{T}$, we also have $z^{-k}f(z)\in L^2(m_w)$.   Theorem \ref{Thm:Hv=Kv} then gives $U^{-k}f(U)w\in H(w)$.  Since $v$ is orthogonal to $H(w)$,
\begin{equation}
\langle U^k v, f(U)w\rangle = \langle v, U^{-k}f(U)w\rangle = 0.
\end{equation}
By linearity, every vector of the form $g(U)v$ where $g$ has the form
\begin{equation}\label{Eqn:Poly}
 g(z) = \sum_{n=-k}^k c_k z^k
 \end{equation}
 is orthogonal to $H(w)$.  Since the functions $g$ in Equation (\ref{Eqn:Poly}) are dense in $L^2(m_v)$,  we can conclude that $H(v) \perp H(w)$.
   \hfill$\Box$

We remark here that given $v\neq 0$, we can define a real-valued probability measure on $\mathbb{T}$ with $\widetilde{m}_v = \frac{m_v}{\|v\|^2}$, where 
\begin{equation}\label{Eqn:ProbMeas}
\widetilde{m}_v(A) = \frac{m_v}{\|v\|^2}(A) = \frac{1}{\|v\|^2} \langle E^U(A)v, v\rangle
\end{equation}
for any Borel set $A\subseteq \mathbb{T}$.
\begin{theorem}\label{Thm:U_Ergodic}  If $Uv=v$, with $\|v\|=1$, then $v = \alpha e_0$ for some $\alpha \in \mathbb{T}$, i.e. $U$ is an ergodic operator.
\end{theorem}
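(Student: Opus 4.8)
The plan is to prove the theorem by showing, at the level of Fourier coefficients, that a $U$-fixed unit vector $v$ can have no mass off the zero frequency: I will show $\langle v,e_\gamma\rangle=0$ for every $\gamma\in\Gamma\setminus\{0\}$, which immediately forces $v=\langle v,e_0\rangle e_0$, and hence $v=\alpha e_0$ with $\alpha\in\T$ once $\|v\|=1$. The mechanism is that each cyclic subspace $H(e_\gamma)$ is, by Definition \ref{Defn:Cyc_Subsp}, invariant under \emph{both} $U$ and $U^{*}=U^{-1}$, so it is a reducing subspace and the orthogonal projection onto it commutes with $U$. Consequently this projection carries the fixed vector $v$ to a $U$-fixed vector sitting inside $H(e_\gamma)$, and the work of Proposition \ref{Prop:Cyclic} already tells us that such a vector must vanish.

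First I would fix an arbitrary $\gamma\in\Gamma\setminus\{0\}$ and let $P_\gamma$ denote the orthogonal projection of $L^2(\mu)$ onto $H(e_\gamma)$ (note $\|e_\gamma\|=1$, so $H(e_\gamma)$ is well defined). Since $H(e_\gamma)$ is $U$- and $U^{*}$-invariant, $P_\gamma U=UP_\gamma$, and therefore $U(P_\gamma v)=P_\gamma(Uv)=P_\gamma v$; thus $P_\gamma v$ is a $U$-fixed vector lying in $H(e_\gamma)$. Second, I would argue that $H(e_\gamma)$ contains no nonzero $U$-fixed vector. Through the isometric isomorphism $H(e_\gamma)\cong L^2(m_{e_\gamma})$ of Theorem \ref{Thm:Hv=Kv}, a fixed vector corresponds to some $f\in L^2(m_{e_\gamma})$ with $zf(z)=f(z)$ a.e.\ $m_{e_\gamma}$, so $f$ is supported on $\{1\}$ and the vector equals $f(1)e_\gamma$; but $\|e_{5\gamma}-e_\gamma\|^2=\|e_{4\gamma}-e_0\|^2=2$ for $\gamma\neq0$, so $U$ fixes no nonzero multiple of $e_\gamma$, forcing $f(1)e_\gamma=0$. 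This is exactly the computation performed in the proof of Proposition \ref{Prop:Cyclic}. Hence $P_\gamma v=0$, and since $e_\gamma\in H(e_\gamma)$ and $P_\gamma=P_\gamma^{*}$, we get $\langle v,e_\gamma\rangle=\langle P_\gamma v,e_\gamma\rangle=0$.

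As $\gamma\in\Gamma\setminus\{0\}$ was arbitrary, every nonzero-frequency coefficient of $v$ vanishes, so $v=\langle v,e_0\rangle e_0$; normalizing yields $v=\alpha e_0$ with $\alpha=\langle v,e_0\rangle\in\T$, which is the asserted ergodicity. I do not expect to need Lemma \ref{Lemma:OGCyclic} for this route, although it would support an alternative organization in which one peels off mutually orthogonal cyclic subspaces and argues via an orthogonal decomposition of $v$. The only genuine obstacle is the step that $H(e_\gamma)$ carries no nonzero $U$-fixed vector; its entire weight lies in Proposition \ref{Prop:Cyclic} (and, behind it, Corollary \ref{Cor:Dirac} together with the fact that $U$ fixes no nonzero scalar multiple of a single exponential). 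Granting that fact, the theorem collapses to the short observation that projecting $v$ onto each $H(e_\gamma)$ commutes with $U$ and therefore annihilates every off-zero Fourier mode.
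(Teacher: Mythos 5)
Your argument is correct, but it reaches the conclusion by a genuinely more direct route than the paper. The paper argues by contradiction for a single $\gamma$ with $\langle v,e_\gamma\rangle\neq 0$: it splits $v=Qv+v_2$ against $H(e_\gamma)$, invokes Lemma \ref{Lemma:OGCyclic} to get $H(v_2)\perp H(e_\gamma)$, shows $m_v=m_{v_1}+m_{v_2}$, and then uses the fact that the Dirac mass $\delta_1=m_v$ (Lemma \ref{Lemma:DiracMassIFFfixed}) is an extreme point of the probability measures, so it cannot split nontrivially; Proposition \ref{Prop:Cyclic} is still needed there to guarantee $v_2\neq 0$. You bypass the measure decomposition, Lemma \ref{Lemma:OGCyclic}, and the extremality argument entirely: since $H(e_\gamma)$ reduces $U$, the projection $P_\gamma$ commutes with $U$, so $P_\gamma v$ is a $U$-fixed vector inside $H(e_\gamma)$, and Proposition \ref{Prop:Cyclic} forces it to vanish, killing \emph{every} off-zero Fourier coefficient at once rather than deriving a contradiction from one. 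Both proofs place their entire weight on Proposition \ref{Prop:Cyclic}, so nothing is lost; what your version buys is brevity and a positive statement ($P_\gamma v=0$ for all $\gamma\neq 0$) in place of a contradiction, while the paper's version makes the spectral-measure picture explicit. Two small points to tighten: first, your parenthetical "so $f$ is supported on $\{1\}$ and the vector equals $f(1)e_\gamma$" is not immediate --- a priori the vector is $f(1)E^U(\{1\})e_\gamma$, and identifying this with $f(1)e_\gamma$ is precisely the content of Claims 1 and 2 in the proof of Proposition \ref{Prop:Cyclic}, which you correctly cite but should not present as a one-line consequence of the support statement; second, Proposition \ref{Prop:Cyclic} as stated applies to unit vectors $w$ with $\langle w,e_\gamma\rangle\neq 0$, so you should normalize $P_\gamma v$ and note that $\langle P_\gamma v,e_\gamma\rangle=\langle v,P_\gamma e_\gamma\rangle=\langle v,e_\gamma\rangle$, which is nonzero exactly in the cases where there is something to prove.
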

\textbf{Proof:  }Assume there exists $v\in L^2(\mu)\ominus \overline{\textrm{span}\{e_0\}}$ such that $Uv= v$ and $\|v\| = 1$.  Choose $\gamma\in\Gamma\backslash\{0\}$ such that $\langle v, e_{\gamma}\rangle_{L^2(\mu)} \neq 0$.  Let $Q$ be the orthogonal projection onto $H(e_{\gamma})$.

Let $v = v_1 + v_2 = Qv + v_2$, where $v_2$ is orthogonal to $v_1$.  Because $v_2$ is orthogonal to $H(e_{\gamma})$, $H(v_2) \perp H(e_{\gamma})$ by Lemma \ref{Lemma:OGCyclic}.

Let $A$ be a Borel set in $\mathbb{T}$.  Recall that $E^U$ is the projection-valued measure associated to $U$ via the Spectral Theorem.  We compute $m_v(A)$:
\begin{equation}\label{Eqn:mv}
\begin{split}
m_v(A)
 & = \langle E^U(A)v,v\rangle =  \langle E^U(A)(v_1+v_2),v_1+v_2\rangle\\
 & = \langle E^U(A)v_1, v_1\rangle + \langle E^U(A)v_1, v_2\rangle \\
 & \phantom{{=}} +  \langle E^U(A)v_2, v_1\rangle  + \langle E^U(A)v_2, v_2\rangle.
\end{split}
\end{equation}
Now, $E^U(A)v_1$ is an element of $H(v_1)$ because 
\[ E^U(A) = \int_{\mathbb{T}} \chi_A(z) dE^U(z) = \chi_{A}(U).\]
Since $v_1 \in H(e_{\gamma})$, we can write $v_1 = f(U)e_{\gamma}$ for $f$ a function in $L^2(m_{e_{\gamma}})$. Then,
\[  \langle E^U(A)v_1, v_2\rangle  = \langle \chi_A(U)f(U)e_{\gamma}, v_2 \rangle. \] The product $\chi_A \cdot f$ is again a function in $L^2(m_{e_{\gamma}})$,  so Theorem \ref{Thm:Hv=Kv} shows that
 $E^Uv_1 \in H(e_{\gamma})$.   Thus we have that the term  $\langle E^U(A)v_1, v_2\rangle = 0$ since $v_2$ is orthogonal to $H(e_{\gamma})$.  Similarly, 
 \[  \langle E^U(A)v_2, v_1\rangle =  \langle v_2, E^U(A)v_1\rangle = \langle v_2, \chi_A(U)f(U)e_{\gamma} \rangle = 0.\]
 
 This gives, for any Borel subset $A \subseteq \mathbb{T}$,  
 \begin{eqnarray*} m_v(A) &=& \langle E^U(A)v_1, v_1\rangle  + \langle E^U(A)v_2, v_2\rangle \\&=& m_{v_1}(A) + m_{v_2}(A) \\ &=& \|v_1\|^2\widetilde{m}_{v_1}(A) + \|v_2\|^2 \widetilde{m}_{v_2}(A).\end{eqnarray*}

We have shown in the above that $m_v$ is a convex combination of the probability measures $\widetilde{m}_{v_1}$ and $\widetilde{m}_{v_2}$.  The coefficients are both nonzero since the vectors $v_1$ and $v_2$ are both nonzero.  But this contradicts the fact from \ref{Lemma:DiracMassIFFfixed} that $m_v = \delta_1$ since Dirac measures are extreme points in the convex space of probability measures.
With this contradiction, we find that $v$ must be a unit vector in the span of the vector $e_0$.   Therefore the operator $U$ is ergodic.
\hfill$\Box$

\section{The mixed scales $4$ and $5$}\label{Sec:MPT}

In this section, we study the two different scales $\times 4$ and $\times 5$---scaling by $4$ and scaling by $5$.  We have devoted most of the paper to the scale $\times 5$ because $\times 5$ maps one ONB of $L^2(\mu_{\frac14})$ to another.  However, the ``natural'' scale inherent in $L^2(\mu_{\frac14})$ is $\times 4$.  For example, if $\tau_n:[0,1]\rightarrow[0,1]$ is defined by $\tau_n(x) = nx \pmod{1}$, then
\[ \mu_{\frac14}\circ \tau_4^{-1} = \mu_{\frac14}.\]
We will see that it is difficult to obtain positive results for the corresponding measure 
\[ \mu_{\frac14}\circ \tau_5^{-1}.\]

Let $U_n:L^2(\mu_{\frac14})\rightarrow L^2(\mu_{\frac14})$ defined on the ONB $E(\Gamma)$ by
\begin{equation}
U_n(e_{\gamma}) = e_{n\gamma}.
\end{equation}
As we have seen, $U_5$ is ergodic (Theorem \ref{Thm:U_Ergodic}), and $U_4$ is an isometry but not unitary.

First, we will study the spatial implementation of $U = U_5$ and $U_4$.  Then we compare ergodic theorems for the operators $U_5$ and $U_4$.  Finally, we compare the spectral measures from $U_5$ to the measure $\mu_{\frac14}$ itself.  Our results about the scaling pair $(\times 4, \times 5)$ fit into the setting of the paper \cite{JoRu95}, which explores occurrence and non-occurrence of mixed scaling in ergodic theory.

\subsection{Spatial implementation}

Recall from the Introduction (Section \ref{Subsec:Motiv}, Equations \eqref{Eqn:U^3e_1} and \eqref{Eqn:e_125}) that although it is tempting to think that $U_5^k e_{\gamma} = e_{5^k \gamma}$, this equation does not hold in general.  However, such an equation certainly holds for $U_4$.  

\begin{definition}We say that the operator $T$ is \textbf{spatially implemented} if there exists a point transformation $\tau:[0,1]\rightarrow [0,1]$ such that $Tf = f\circ\tau$.  
\end{definition}

\begin{lemma}The operator $U_5:L^2(\mu_{\frac14}) \rightarrow L^2(\mu_{\frac14})$ is not spatially implemented.
\end{lemma}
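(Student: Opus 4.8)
The plan is to argue by contradiction, using the fact that a genuine composition operator is multiplicative on bounded functions, a property that $U=U_5$ fails on the exponential basis $E(\Gamma)$.

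Suppose $U_5$ were spatially implemented, so that $U_5 f = f\circ\tau$ for some point transformation $\tau:[0,1]\to[0,1]$ and all $f\in L^2(\mu_{\frac14})$. Each exponential $e_\gamma$ is bounded and $\mu_{\frac14}$ is a probability measure, so the functions I use all lie in $L^\infty(\mu_{\frac14})\subset L^2(\mu_{\frac14})$ and have bounded products. Evaluating at each point $x$ one has $e_2(\tau(x)) = \bigl(e_1(\tau(x))\bigr)^2$, so as elements of $L^2(\mu_{\frac14})$
\[ U_5(e_2) = e_2\circ\tau = (e_1\circ\tau)^2 = (U_5 e_1)^2.\]
Since $1\in\Gamma$ we have $U_5 e_1 = e_5$, whence $(U_5 e_1)^2 = e_5\cdot e_5 = e_{10}$, and the hypothesis forces $U_5 e_2 = e_{10}$. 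I will show this fails. (The exponent sum here involves a base-$4$ carry, $1+1=2\notin\Gamma$; for digit-disjoint sums such as $1+4=5\in\Gamma$ one checks $U_5 e_5 = e_{25} = e_5 e_{20}$ and no contradiction appears, so the carry is essential.)

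To see $U_5 e_2\neq e_{10}$, note that $0\in\Gamma$ gives $U_5 e_0 = e_0$, so $U_5^*=U_5^{-1}$ also fixes $e_0$; unitarity then yields
\[ \langle U_5 e_2, e_0\rangle_{L^2(\mu_{\frac14})} = \langle e_2, U_5^* e_0\rangle = \langle e_2, e_0\rangle = \widehat{\mu}_{\frac14}(2),\]
while $\langle e_{10}, e_0\rangle = \widehat{\mu}_{\frac14}(10)$. Thus it suffices to prove $\widehat{\mu}_{\frac14}(2)\neq\widehat{\mu}_{\frac14}(10)$, and I expect this sign computation to be the only delicate point. Rather than estimate infinite products numerically, I would use the scaling identity $\widehat{\mu}_{\frac14}(4\xi)=\cos(2\pi\xi)\,\widehat{\mu}_{\frac14}(\xi)$ (the relation behind $\widehat{\mu}_{\frac14}(120)=\widehat{\mu}_{\frac14}(30)$ in Section \ref{Subsec:Motiv}). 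Writing $2=4\cdot\frac12$ gives $\widehat{\mu}_{\frac14}(2)=\cos(\pi)\,\widehat{\mu}_{\frac14}(\frac12)=-\widehat{\mu}_{\frac14}(\frac12)$, and every cosine factor in the product for $\widehat{\mu}_{\frac14}(\frac12)$ is positive, so $\widehat{\mu}_{\frac14}(2)<0$. Writing $10=4\cdot\frac52$ gives $\widehat{\mu}_{\frac14}(10)=\cos(5\pi)\,\widehat{\mu}_{\frac14}(\frac52)=-\widehat{\mu}_{\frac14}(\frac52)$, and the product for $\widehat{\mu}_{\frac14}(\frac52)$ has exactly one negative factor, $\cos(5\pi/4)$, so $\widehat{\mu}_{\frac14}(\frac52)<0$ and $\widehat{\mu}_{\frac14}(10)>0$. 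Hence $\widehat{\mu}_{\frac14}(2)<0<\widehat{\mu}_{\frac14}(10)$, so $U_5 e_2\neq e_{10}$, contradicting spatial implementation. The real obstacle is conceptual rather than computational: recognizing that multiplicativity is the structural property separating honest composition operators from $U_5$, and selecting a product whose exponent leaves $\Gamma$ so that the $\times5$ scaling collides with pointwise multiplication.
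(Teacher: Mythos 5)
Your proof is correct and follows the same strategy as the paper: assume $U_5f=f\circ\tau$, exploit multiplicativity on $e_1\cdot e_1=e_2$, and compare the $e_0$-coefficients of $U_5(e_2)$ and $(U_5e_1)^2=e_{10}$. The one place you diverge is in evaluating $\langle U_5e_2,e_0\rangle$, and your version is actually the more reliable one: the paper expands $e_{5\gamma}=\sum_\xi\widehat{\mu}(5\gamma-\xi)e_\xi$ but writes the coefficient as $\widehat{\mu}(5-\xi)$, and on that basis concludes the constant term is $0$; the correct constant term is $\sum_{\gamma\in\Gamma}\widehat{\mu}(2-\gamma)\widehat{\mu}(5\gamma)=\widehat{\mu}(2)$ (only $\gamma=0$ survives because $5\Gamma$ is an ONB), which agrees with your adjoint computation $\langle U_5e_2,e_0\rangle=\langle e_2,U_5^*e_0\rangle=\widehat{\mu}_{\frac14}(2)$. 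Consequently the paper only needs $\widehat{\mu}_{\frac14}(10)\neq 0$, whereas the correct argument needs $\widehat{\mu}_{\frac14}(2)\neq\widehat{\mu}_{\frac14}(10)$; your sign computation via $\widehat{\mu}_{\frac14}(4\xi)=\cos(2\pi\xi)\widehat{\mu}_{\frac14}(\xi)$, giving $\widehat{\mu}_{\frac14}(2)<0<\widehat{\mu}_{\frac14}(10)$, supplies exactly that and does so without numerical estimation. So your write-up both reproduces the paper's idea and repairs the coefficient bookkeeping in its displayed computation.
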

\textbf{Proof:  }Suppose there were such a transformation $\tau:[0,1]\rightarrow [0,1]$ such that $U_5f = f\circ \tau$.  Then $U_5(fg) = U_5(f)U_5(g)$, and as a specific consequence,
\begin{equation}\label{Eqn:5.1}
\begin{split}
U_5(e_{1}\cdot e_{1}) 
& = U_5(e_{1})\cdot U_5(e_{1}) = e_5\cdot e_5 = e_{10}\\
& = \widehat{\mu}(10)e_{0} + \sum_{\xi\neq 0} \widehat{\mu}(10 - \xi)e_{\xi}.
\end{split}
\end{equation}
On the other hand, $e_{1}\cdot e_{1} = e_{2}$, and 
\begin{equation}\label{Eqn:5.2}
\begin{split}
U_5(e_{2}) 
& = \sum_{\gamma\in\Gamma} \widehat{\mu}(2 - \gamma)e_{5\gamma}= \sum_{\gamma, \xi\in\Gamma}\widehat{\mu}(2 - \gamma)\widehat{\mu}(5-\xi)e_{\xi}\\
& = \sum_{\gamma\in\Gamma}\widehat{\mu}(2 - \gamma)\widehat{\mu}(5)e_{0} + \sum_{\gamma\in\Gamma, \xi\neq 0}\widehat{\mu}(2 - \gamma)\widehat{\mu}(5-\xi)e_{\xi}\\
& = 0\, e_{0} + \sum_{\gamma\in\Gamma, \xi\neq 0}\widehat{\mu}(2 - \gamma)\widehat{\mu}(5-\xi)e_{\xi}.
\end{split}
\end{equation}
By comparing the constant terms in Equations \eqref{Eqn:5.1} and \eqref{Eqn:5.2}, we see that the two expressions cannot be the same, since $\widehat{\mu}(10) \neq 0$.  Therefore $U_5$ is not spatially implemented.\hfill$\Box$

The operator $U_4$, on the other hand, is readily seen to be spatially implemented by the map $\tau_4(x) = 4x \pmod{1}$.

\subsection{Averaging}

With Theorem \ref{Thm:U_Ergodic} in hand, we can study averaging with respect to $U_5$ and $U_4$.  Suppose $T:\hs\rightarrow\hs$ is a bounded operator, and 
\[Q = \{ f\in \hs : Tf = f\}.\]  
Let $P_Q$ be the orthogonal projection onto $Q$.  The ergodic theorem of von Neumann states that 
\begin{equation}\label{Eqn:vN_erg}
\lim_{N\rightarrow\infty} \frac{1}{N+1} \sum_{k=0}^N T^k f = P_Q(f)
\end{equation}
\cite{Yo74}.  In the special case of $U_5:L^2(\mu_{\frac14})\rightarrow L^2(\mu_{\frac14})$, the subspace $Q$ is the one-dimensional space spanned by the constant function $e_0$ by Theorem \ref{Thm:U_Ergodic}.  Therefore, 
\begin{equation}\label{Eqn:vN_Erg_U}
\lim_{N\rightarrow\infty} \frac{1}{N+1} \sum_{k=0}^N U_5^k f = \langle f, e_0\rangle e_0 = \Big(\int f(x) \dd \mu_{\frac14}\Bigr)e_0.
\end{equation}
If we think of the Cesaro mean of the iterations of $U$ as a ``time average'', and think of  the integral with respect to the measure $\mu_{\frac14}$ as a  ``space average'', then we have now shown that the time average applied to functions in $L^2(\mu_{\frac14})$ equals the space average. 
  
By contrast, we note that the isometry $U_4(e_\gamma) := e_{4\gamma}$ is spatially implemented and that it is induced by $\tau_4(x) = 4x \pmod{1}$.  We also know that $\mu_{\frac14}$ is invariant under $\tau_4$.  Because $U_4$ can be realized as a shift on the underlying digit space, it is not hard to see that the only functions fixed by $U_4$ are also the constant functions:
\begin{equation}
\lim_{N\rightarrow\infty} \frac{1}{N+1} \sum_{k=0}^N U_4^k f = \langle f, e_0\rangle e_0 = \Big(\int f(x) \dd \mu_{\frac14}\Bigr)e_0.
\end{equation}
Again the time average applied to $U_4$ on the function $f$ equals the same space average of $f$. But the result for $U_5$ is much deeper than that of $U_4$.  For background references on ergodic transformations, see \cite{Yo74, Hal56}, and for references on multiplicity theory, see \cite{Hal51, Nel69}.

There is no clean relation between the two measures $\mu_{\frac14}$ and $\mu_{\frac14}\circ\tau_5^{-1}$.  
\begin{corollary}\label{Cor:Meas}
The measures $\mu_{\frac14}$ and $\mu_{\frac14}\circ\tau_5^{-1}$ are not equivalent.
\end{corollary}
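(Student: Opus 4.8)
The plan is to prove the stronger, one-sided statement that $\mu_{\frac14}\circ\tau_5^{-1}$ is not absolutely continuous with respect to $\mu_{\frac14}$; since equivalence of measures requires mutual absolute continuity, this already yields the corollary. Concretely, I would produce a Borel set $A\subseteq[0,1)$ that is $\mu_{\frac14}$-null but carries positive mass for the push-forward $\mu_{\frac14}\circ\tau_5^{-1}$. Because $\mu_{\frac14}\circ\tau_5^{-1}(A)=\mu_{\frac14}(\tau_5^{-1}(A))$, this amounts to finding a $\mu_{\frac14}$-null target $A$ whose preimage under $\tau_5$ meets the support $X_{\frac14}$ in a set of positive $\mu_{\frac14}$-measure. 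The reduction is immediate: if $\mu_{\frac14}(A)=0$ while $\mu_{\frac14}\circ\tau_5^{-1}(A)>0$, then $\mu_{\frac14}\circ\tau_5^{-1}\not\ll\mu_{\frac14}$, so the two measures cannot be equivalent.

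The natural candidate for $A$ is a gap of the Cantor set $X_{\frac14}$, that is, a nonempty open interval lying in its complement; any such $A$ is automatically $\mu_{\frac14}$-null. The work is then to show $\mu_{\frac14}(\tau_5^{-1}(A))>0$, and for this I would exhibit a single cylinder $Z\subseteq X_{\frac14}$, obtained by fixing finitely many coordinates in the natural IFS ($\tau_\pm$, equivalently base-$4$) address of $X_{\frac14}$, such that $\mu_{\frac14}(Z)>0$ and $\tau_5(Z)\subseteq A$. Since fixing $n$ addresses gives a cylinder of mass $2^{-n}$, positivity of $\mu_{\frac14}(Z)$ is free; the content is the inclusion $\tau_5(Z)\subseteq A$. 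One computes the image under $x\mapsto 5x\pmod 1$ of the short interval spanned by $Z$ and checks, by an elementary interval estimate, that it lands inside the chosen gap: a cylinder near an endpoint of $X_{\frac14}$ is stretched by the factor $5$ and, after reduction modulo $1$, is carried into a gap of the Cantor set, where $\mu_{\frac14}$ vanishes. Equivalently, one may phrase the whole argument through supports: two equivalent Borel measures share the same closed support, so it suffices to observe $\overline{\tau_5(X_{\frac14})}\neq X_{\frac14}$, and the cylinder above exhibits a whole cylinder's worth of points of $\tau_5(X_{\frac14})$ sitting in a gap of $X_{\frac14}$.

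The main obstacle, and the genuine content of this section, is precisely the interaction of the two scales $\times 4$ and $\times 5$. The set $X_{\frac14}$ and the measure $\mu_{\frac14}$ are organized entirely by base-$4$ arithmetic, whereas $\tau_5$ multiplies by $5$; since $5$ and $4$ are coprime, $\tau_5$ does not respect the base-$4$ digit structure of $X_{\frac14}$, and it is exactly this arithmetic mismatch that forces a positive-measure cylinder into a gap. This is the quantitative heart of the proof and the one step I would carry out carefully. It should be contrasted with the benign scale $\times 4$, for which $\tau_4$ merely permutes base-$4$ cylinders and yields $\mu_{\frac14}\circ\tau_4^{-1}=\mu_{\frac14}$; the failure here is the measure-theoretic counterpart of the earlier fact that $U=U_5$, unlike $U_4$, is not spatially implemented. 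I would therefore present the single cylinder computation as the one quantitative lemma and leave the reduction to non-absolute-continuity and the support remark as the surrounding qualitative argument.
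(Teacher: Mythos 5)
Your proposal is correct and follows essentially the same route as the paper: the paper takes $A=(\tfrac23,1]$, notes $\mu_{\frac14}(A)=0$ since $A$ misses $X_{\frac14}$, and shows $\mu_{\frac14}\circ\tau_5^{-1}(A)>\tfrac18$ because a branch of $\tau_5^{-1}$ carries $A$ back onto a positive-measure cylinder of $X_{\frac14}$, which is exactly your one quantitative lemma. The only (immaterial) differences are that the paper's null set sits beyond the right endpoint of the support rather than in an interior gap, and the paper adds the unneeded observation that the two measures are also not mutually singular.
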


\textbf{Proof:  }
Set $A = (\frac23, 1]$.  Then $\mu_{\frac14}(A) = 0$, since $A$ is not contained in the Cantor set $X_{\frac14}$, but $\mu_{\frac14}\circ\tau_5^{-1}( (A) > \frac18$, as demonstrated in Figure 1.  Therefore $\mu_{\frac14}\circ\tau_5^{-1}$ is not absolutely continuous with respect to $\mu_{\frac14}$.  Neither are the two measures concentrated on disjoint sets:  both measures assign positive values to the set $[0, \frac12]$.\hfill$\Box$

\begin{figure}
\begin{center}
\includegraphics{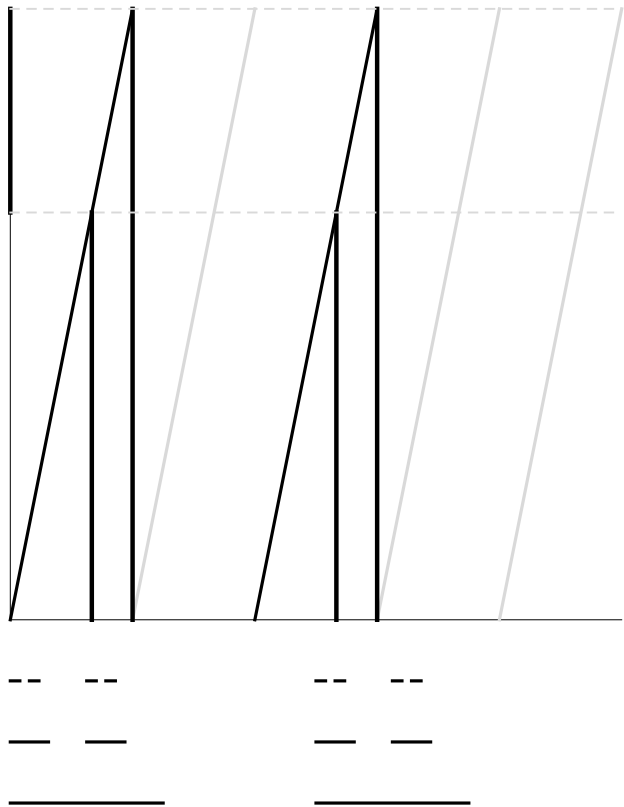}
\caption{The graph of $\tau_5$ on $[0,1]\times [0,1]$ sits above the first two approximations of the Cantor set $X_{\frac{1}{4}}$.  The set $(\frac{2}{3}, 1]$ is pulled back by two branches of $\tau_5^{-1}$.   The left-most branch of $\tau_5^{-1}$ pulls $[\frac{2}{3}, 1)$ back to a set on the horizontal axis which contains a scaled copy of $X_{\frac{1}{4}}$.}
\end{center}
\end{figure}

\renewcommand{\baselinestretch}{1.1} \large\normalsize


\end{document}